\newtheoremstyle{citing}{3pt}{3pt}{\itshape}{}{\bfseries}{.}{.5em}{\thmnote{#3}}\theoremstyle{citing}
\theoremstyle{plain}
\newtheorem{introTheorem}{Theorem}
\theoremstyle{plain}
\newtheorem{theorem}{Theorem}[section]
\newtheorem{lemma}[theorem]{Lemma}
\newtheorem{corollary}[theorem]{Corollary}
\theoremstyle{remark}
\newtheorem{remark}[theorem]{Remark}
\newtheorem{example}[theorem]{Example}
\theoremstyle{definition}
\newtheorem{definition}[theorem]{Definition}
\newtheorem{defparagraph}[theorem]{}
\newtheorem{propparagraph}[theorem]{}
\newcounter{counter1}
\newcounter{counter2}
\newcommand{\integers}{\integers}
\newcommand{\weight}{\mathbin{\vrule height 1.6ex depth 0pt width
		0.14ex\vrule height 0.14ex depth 0pt width 1.3ex}}
\newcommand{\NormBoundedExpr}[3]{#1(#2)\leq 1, #2 \in #3 }
\DeclareMathOperator{\card}{card}
          \DeclareMathOperator{\spt}{spt}     \DeclareMathOperator{\im}{im}       \DeclareMathOperator{\Int}{Int}     \DeclareMathOperator{\diam}{diam}        \DeclareMathOperator{\dmn}{dmn}     \DeclareMathOperator{\dist}{dist}   \DeclareMathOperator{\Hom}{Hom}                     \DeclareMathOperator{\pt}{pt}
\DeclareMathOperator{\D}{D}
\DeclareMathOperator{\Lap}{Lap}
\newcommand{\RomanNumeralCaps}[1]
{\MakeUppercase{\romannumeral #1}}
\newcommand{\BourbakiTVS}[1]{\cite[#1]{MR910295}}
\newcommand{\BourbakiGTI}[1]{\cite[#1]{bourbaki1998general1}}
\newcommand{\BourbakiGTII}[1]{\cite[#1]{bourbaki1998general2}}
\newcommand{\mathuniformity}[1]{\mathfrak{#1}}
\newcommand{\BourbakiCA}[1]{\cite[]{}}
\newcommand{\BourbakiAlg}[1]{\cite[]{bourbaki1998algebra,bourbaki2013algebra}}
\newcommand{\BourbakiLieAlg}[1]{\cite[]{}}
\newcommand{\BourbakiSet}[1]{\cite[]{}}
\newcommand{\BourbakiCiteNumber}[3]{\RomanNumeralCaps{#1}.\S #2.#3}
\begin{document}

\author{
	Yu Tong
	\footnote{National Taiwan Normal university, Department of Mathematic, Taipei, Taiwan, 80940004S@ntnu.edu.tw}
}
\title{Pointwise differentials for Distributions}
\maketitle

\begin{abstract}
	The notion of pointwise differentials for distributions is a way to
	extract local information of distributions by rescaling the distribution at a point. 
	In this paper, we study the pointwise differentials
	for distributions corresponding to
	a  negative order Sobolev functions.
	Our main results prove Borel regularity, Lusin approximation, rectifiability,
	and a Rademacher theorem for these pointwise differentials.
\end{abstract}
\tableofcontents

\section{Introduction}

This paper continues the study of pointwise differentials for distributions. 
The theory of pointwise differentials for functions in Lebesgue spaces was introduced by Calder\'on and Zygmund in  \citep{MR0136849} to study the local properties of strong solutions of elliptic partial differential equations. Roughly speaking, they studied the asymtotic behavior of rescalings of a function $f$ compared with a polynomial $P$ using $\mathbf L_p$ norm. Motivated by regularity questions in Geometric Measure Theory, 
in \cite{2021} Menne introduced the notion of pointwise differentials for weak solutions, using the same idea as in \citep{MR0136849} to study the local behavior of a distribution in \citep{2021} but with different norm. The pointwise differential for distributions has occurred in \citep{MR3023856}, where Rademacher-Stepanov type results were obtained as an intermediate step of the regularity result of intergal varifold with locally bounded variation. The goal of this paper is to study the pointwise differential for distribution using  Sobolev norms by proving four main results as in \citep{2021}. Before we can state our results, we need the following definitions and notation.

Below, we recall the notion of pointwise differentials for distributions introduced in \cite{2021}. See section \ref{sec:Notation} for definitions of polynomial functions \ref{symmetri tensor} and distributions \ref{defpar:Distribution}.
In particular, given a Banach space $Y$, 
we denote $Y'$ to be the topological dual of $Y$ (Note that in \citep{2021},
 $Y^\ast$ is the topological dual of $Y$), 
  $\mathscr D(U,Y)$ denote the vector space of smooth functions on
   open subset $U$ of $\mathbf R^n$ with valued in $Y$, 
    $\mathscr D_K(U,Y)$ denote the vector subspace consist of 
	$f\in \mathscr D(U,Y)$ with support in $K$,
	 denote $\mathscr D'(U,Y)$ to be the set of distributions of $Y$ in $U$.
	  Whenever 
	  $\theta \in \mathscr D_{\mathbf B(0,1)}(\mathbf R^n,Y),T\in \mathscr D'(\mathbf R^n,Y),a\in \mathbf R^n,0<r<+\infty$,
	   we write $T^{a,r}(\theta)=r^{-n}T_x(\theta (r^{-1}(x-a))), \boldsymbol{\nu}_{\mathbf B(0,1)}^i(\theta) 
	   = \sup \im \|\D^i \theta\|,\nu_{i,p}(\theta)=\left(\int \|\D^i \theta\|^p \ d \mathscr L^n\right)^{1/p}$. 
	   Throught out this paper, $n$ will be a positive integer.
 \begin{definition}\cite[2.23]{2021}\label{IntroDef:Pt} \
 	Suppose $Y$ is a Banach space, $T\in \mathscr D'(\mathbf R^n, Y)$, $\nu$ is a seminorm on $\mathscr D_{\mathbf {B}(0,1)}(\mathbf R^n,Y)$, $k$ is  an integer, $0 < \alpha \leq 1$, $k+\alpha \geq 0$, and $a\in \mathbf R^n$.	
 	\begin{enumerate}
 		\item The distribution $T$ is  pointwise differentiable of order $(k,\alpha)$ at $a$ if and only if there exists a polynomial function $P : \mathbf R^n \to Y'$  polynomial  of degree at most $k$ such that 
 		\[ \limsup_{r\to0^+} r^{-k-\alpha}(T - P)^{a,r}(\theta) <+\infty \quad \text{for every $\theta \in \mathscr D_{\mathbf{B}(0,1)}(\mathbf R^n,Y)$ }.\]	
 		\item The distribution $T$ is  pointwise differentiable of order $k$ at $a$ if and only if there exists a polynomial function $P : \mathbf R^n \to Y'$ of degree at most $k$, such that 
 		\[ \lim_{r\to0^+} r^{-k}(T - P)^{a,r}(\theta) =0 \quad \text{for every $\theta \in \mathscr D_{\mathbf{B}(0,1)}(\mathbf R^n,Y)$ }.\] 
 		If $T$ is pointwise differentiable of order $k\geq 0$ at $a\in \mathbf R^n$, then $P$ is the unique polynomial map of degree at most $k$ satisfying the above condition, and for every integer $0\leq m \leq k$, we define  $\pt \D^m T(a)= \D^m P(a)$. The polynomial map $P$ is a called the \textit{$k$ jet} of $T$ at $a$.
 		\item The distribution $T$ is $\nu$ pointwise differentiable of order $(k,\alpha)$ at $a$ if and only if there exists a polynomial function $P : \mathbf R^n \to Y'$  of degree at most $k$ such that 
 		\[ \limsup_{r\to0^+} \sup\{ r^{-k-\alpha}(T - P)^{a,r}(\theta) : \NormBoundedExpr{\nu}{\theta}{\mathscr D_{\mathbf{B}(0,1)}(\mathbf R^n,Y)} \}< +\infty.\]
 		\item The distribution $T$ is $\nu$ pointwise differentiable of order $k$ at $a$ if and only if there exists a polynomial function $P : \mathbf R^n \to Y'$ of degree at most $k$, such that 
 		\[ \lim_{r\to0^+} \sup\{ r^{-k}(T-P)^{a,r}(\theta) : \NormBoundedExpr{\nu}{\theta}{\mathscr D_{\mathbf{B}(0,1)}(\mathbf R^n,Y)}  \} = 0.\]
 	\end{enumerate}
 	Here, by convention a polynomial function $P : \mathbf R^n \to Y$ of degree $-1$ is the zero function.
 \end{definition}

Suppose $i$ is a nonnegative integer.
In \cite{2021},
 Menne defines the notion of 
 $\boldsymbol{\nu}_{\mathbf B(0,1)}^i$ pointwise differential 
 by replacing $\nu$ with 
 $\boldsymbol{\nu}_{\mathbf B(0,1)}^i$ in \ref{IntroDef:Pt}, 
 and proves a criterion for differentiability
  and four types of results: 
  Borel regularity of the differential, 
  a Rademacher-Stepanov-type theorem, rectifiability of the family of $k$ jets, 
  and a Lusin-type approximation by functions of class $k$. 

In this paper, we extends those statement from \cite{2021} 
from the case of $\boldsymbol{\nu}_{\mathbf B(0,1)}^i$ pointwise differentials to 
the case of $\nu_{i,p}$ pointwise differentials.
 There is an exception in formulation of the Borel regularity theorem, 
 instead of treating the pointwise differential of a fixed distribution 
 as a function of point in euclidean space, 
 we prove the Borel regularity of the $\nu$ pointwise differential 
 as function in variable of point in Euclidean space and point in the space of distributions.
Now, we state the main results of the paper.
\paragraph{A Criteria of differentiability}
A criterion for the differentiability of distributions is proved in 
which the $\nu_{i,p}$ pointwise differentiability of a distribution can
 be deduced from the $\nu_{i,p}$ pointwise differentiability of its derivatives.

\begin{introTheorem}[see \ref{ThmB}]\label{theorem-2}
	Suppose $i,m$ are nonnegative integers, $k$ is an integer, $0<\alpha \leq 1$, $Y$ is a Banach space, $1\leq p \leq  \infty$, $T \in \mathscr D'(\mathbf R^n,Y)$, and $a\in \mathbf R^n$.
	\begin{enumerate}
		\item If  $k\geq 0$ and 
		for every $\xi\in \Xi(n,m)$ the distribution $\D^\xi T$ 
		is $\nu_{i,p}$ pointwise differentiable of order $k$ at $a$, 
		then for every $l=0,1,\dots,m-1$ and $\beta \in \Xi(n,l)$ 
		the distribution $\D^\beta T$ is $\nu_{i,p}$ pointwise differentiable of order $k-l+m$ at $a$.
		\item If $k+\alpha \geq 0$ and for every $\xi\in \Xi(n,m)$
		 the distribution $\D^\xi T$ is $\nu_{i,p}$ pointwise differentiable of order $(k,\alpha)$ at $a$, then for every $l=0,1,\dots,m-1$ and $\beta \in \Xi(n,l)$ the distribution  $\D^\beta T$ is $\nu_{i,p}$ pointwise differentiable of order $k-l+m$ at $a$.
	\end{enumerate}
\end{introTheorem}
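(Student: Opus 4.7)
The plan is to induct on $m - l$, reducing to the key case $m = 1$, $l = 0$: if $\D_\mu T$ is $\nu_{i,p}$ pointwise differentiable of order $k$ (resp.\ $(k,\alpha)$) at $a$ with $k$-jet $P_\mu$ for every $\mu = 1, \ldots, n$, then $T$ is $\nu_{i,p}$ pointwise differentiable of order $k+1$ at $a$. Iterating this single step $m-l$ times gives the full statement. Observe that one integration turns the H\"older bound $O(r^{k+\alpha})$ from part~(2) into $O(r^{k+\alpha+1}) = o(r^{k+1})$ (using $\alpha > 0$), which is why the conclusion in part~(2) drops the H\"older exponent and reduces to strong differentiability after the first step.

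The first subtask is consistency $\D_\nu P_\mu = \D_\mu P_\nu$. Testing the hypothesis against $\D_\nu \psi$ and using the identity $S^{a,r}(\D_\nu \psi) = -r (\D_\nu S)^{a,r}(\psi)$ yields $(\D_\mu \D_\nu T - \D_\nu P_\mu)^{a,r}(\psi) = o(r^{k-1})$. Equality of mixed partials then forces the polynomial $R := \D_\nu P_\mu - \D_\mu P_\nu$, of degree $\leq k-1$, to satisfy $R^{a,r}(\psi) = o(r^{k-1})$ for $\psi$ in a suitable class; comparing powers of $r$ in its Taylor expansion about $a$ forces $R \equiv 0$. A polynomial Poincar\'e lemma then produces $Q_0$ of degree $\leq k+1$ with $\D_\mu Q_0 = P_\mu$ for all $\mu$, unique up to an additive constant.

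The heart of the argument is the derivative identity
\[\frac{d}{dr}\, S^{a,r}(\theta) \;=\; \sum_{\mu=1}^n (\D_\mu S)^{a,r}(z_\mu \theta), \qquad S := T - Q_0,\]
obtained by differentiating $r^{-n}\theta(r^{-1}(x-a))$ in $r$, using $\sum_\mu \D_\mu(z_\mu \theta) = n\theta + z \cdot \nabla\theta$, and integrating by parts against $S$. The auxiliary bound $\nu_{i,p}(z_\mu \theta) \leq C\, \nu_{i,p}(\theta)$ follows from the Leibniz rule together with Friedrichs's inequality on $\mathbf B(0,1)$, which controls $\|\D^j \theta\|_p$ by $\|\D^i \theta\|_p$ for every $j \leq i$. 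By hypothesis $|(\D_\mu S)^{a,r}(z_\mu \theta)| \leq \varepsilon(r)\, r^k\, \nu_{i,p}(\theta)$ with $\varepsilon(r) \to 0$ in case~(1), and $\leq C r^{k+\alpha} \nu_{i,p}(\theta)$ in case~(2).

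Integrating from $0$ to $r$ then shows that $S^{a,r}(\theta)$ converges as $r \to 0^+$ to a linear functional $L(\theta)$ with remainder $o(r^{k+1})\,\nu_{i,p}(\theta)$. Since $S^{a,r}(\D_\mu \eta) = -r (\D_\mu S)^{a,r}(\eta)$ is already $o(r^{k+1})$, the limit $L$ annihilates every $\D_\mu \eta$, so $L(\theta) = c \int \theta$ for a unique $c \in Y'$; setting $Q := Q_0 + c$, of degree $\leq k+1$, produces the desired $(k+1)$-jet of $T$ at $a$. The main technical obstacle is the Leibniz/Poincar\'e estimate $\nu_{i,p}(z_\mu \theta) \lesssim \nu_{i,p}(\theta)$ in the $L^p$ setting, which is the key point of departure from the sup-norm case treated in \cite{2021}, together with identifying the limiting functional $L$ with a genuine constant distribution rather than a more singular object.
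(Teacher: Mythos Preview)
Your proposal is correct and follows essentially the same route as the paper: reduction to $m=1$ by induction, the derivative identity $\frac{d}{dr}S^{a,r}(\theta)=\sum_\mu(\D_\mu S)^{a,r}(X_\mu\theta)$, the Leibniz/Poincar\'e bound $\nu_{i,p}(X_\mu\theta)\lesssim\nu_{i,p}(\theta)$ (this is exactly the paper's \ref{example-L_p}), integration in $r$ to get a Cauchy limit, and identification of the limit as a constant via the constancy theorem.

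The one place you diverge is the construction of the candidate polynomial $Q_0$. You verify the compatibility $\D_\nu P_\mu=\D_\mu P_\nu$ directly by testing against $\D_\nu\psi$ and then invoke a polynomial Poincar\'e lemma. The paper instead observes that $\nu_{i,p}$ pointwise differentiability implies plain pointwise differentiability and cites \cite[Theorem~3.11]{2021}, which already yields that $T$ is pointwise differentiable of order $k+1$; the $(k+1)$-jet $Q$ so obtained automatically satisfies $\D_\mu Q=P_\mu$. Your argument is more self-contained, the paper's is shorter; both are fine, and your compatibility step is routine once one unwinds what \cite[3.11]{2021} does internally.
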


In \citep{2021},
 this type of result is obtained as a consequence of 
 a Poincaré type inequality \citep[3.11]{2021}. 
 We use a different approach, following the remark \cite[3.12]{2021}. 
 Specifically, we first reduce to the case $k=1$ by induction
  and prove \ref{EFF} which 
  \textit{relates the deformation of $T$ at a point and their partial derivatives}.
  This relation allow us to conclude that the differentiablility of $\D_j T$ for $j\in\{1,\dots,n\}$ 
  implies the deformation of $T$ is a Cauchy filter in the space of distributions
  clustering a distribution whose first order deriviatve vanishes. Theorem
  \ref{theorem-2} follows from estimating the convergence behavior of the deformation. 
  We also deduce Re\v{s}hetnyak's theorem \citep{MR0136849} as a corollary (see \ref{Rešhetnyak’s}).
	
\paragraph{Borel regularity of the differential}
 In \cite{2021}, the regularity of the pointwise differential is studied as a function mapping points in Euclidean space into the space of symmetric functions. In this paper, we study the regularity of the $\nu$ pointwise differential as a function mapping points and distributions into the space of symmetric functions. To state our results we use Bourbaki's notation in \BourbakiTVS{} in which the $\mathscr D'(\mathbf R^n,Y)_b$ is the space of distributions endowed with uniform convergence on bounded sets in $\mathscr D(\mathbf R^n,Y)$ (see appendix \ref{loc} for full detail).

\begin{introTheorem}\emph{(see \ref{Borel-function-of-ptD})}
	\label{introThmA}
	Suppose $Y$ is a Banach space,
	 $\kappa\in \mathbf R$, 
	 $k$ is an  integer,
	 and $\nu$ is a continuous seminorm on $\mathscr D_{\mathbf B(0,1)}(\mathbf R^n,Y)$. Let $Y_k$ be vector space of polynomial function of degree at most $k$ with values in $Y'$(here we use the convention that for every integer $k< 0$, the vector space $ Y_k$ is zero). Then 
	\begin{enumerate}
		\item   The set $E$ of  $(a,T,P) \in \mathbf R^n\times  \mathscr E'(\mathbf R^n,Y) \times Y_k$ satisfying 
		\[\limsup_{r\to 0^+}r^{\kappa}\sup \{ (T-P)^{a,r}(\theta) : \NormBoundedExpr{\nu}{\theta}{\mathscr D_{\mathbf B(0,1)}(\mathbf R^n,Y)} \} < + \infty\] 
		is a countable union of closed subsets of $\mathbf R^n\times  \mathscr E'(\mathbf R^n,Y)_b \times Y_k$. Moreover, if $Y$ is finite dimensional, then the conclusion can be improved to the set $E$ is a countable union of compact subsets of $\mathbf R^n\times  \mathscr E'(\mathbf R^n,Y)_b \times Y_k$. 
		\item  The set $E_0$ of  $(a,T,P) \in \mathbf R^n\times  \mathscr E'(\mathbf R^n,Y) \times Y_k$ satisfying 
		\[\lim\limits_{r\to 0^+}r^{\kappa}\sup \{ (T-P)^{a,r}(\theta) : \NormBoundedExpr{\nu}{\theta}{\mathscr D_{\mathbf B(0,1)}(\mathbf R^n,Y)} \} =0\] 
		is a Borel subset of $\mathbf R^n\times  \mathscr E'(\mathbf R^n,Y)_b \times Y_k$.
		\item  The set $D$ of  $(a,T,P) \in \mathbf R^n\times  \mathscr D'(\mathbf R^n,Y) \times Y_k$ satisfying 
		\[\limsup_{r\to 0^+}r^{\kappa}\sup \{ (T-P)^{a,r}(\theta) : \NormBoundedExpr{\nu}{\theta}{\mathscr D_{\mathbf B(0,1)}(\mathbf R^n,Y)} \} < + \infty\] 
		is a countable union of closed subsets of $\mathbf R^n\times  \mathscr D'(\mathbf R^n,Y)_b \times Y_k$. 
		\item  The set $D_0$ of  $(a,T,P) \in \mathbf R^n\times  \mathscr D'(\mathbf R^n,Y) \times Y_k$ satisfying 
		\[\lim\limits_{r\to 0^+}r^{\kappa}\sup \{ (T-P)^{a,r}(\theta) : \NormBoundedExpr{\nu}{\theta}{\mathscr D_{\mathbf B(0,1)}(\mathbf R^n,Y)} \} =0\] 
		is a Borel subset of $\mathbf R^n\times  \mathscr D'(\mathbf R^n,Y)_b \times Y_k$.
	\end{enumerate}
\end{introTheorem}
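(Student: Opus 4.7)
The plan is to define, for each $r>0$,
\[
\phi_r(a,T,P) \;=\; r^{\kappa}\sup\bigl\{(T-P)^{a,r}(\theta):\nu(\theta)\le 1,\ \theta\in\mathscr D_{\mathbf B(0,1)}(\mathbf R^n,Y)\bigr\},
\]
to show that $\phi_r$ is lower semicontinuous on $\mathbf R^n\times\mathscr D'(\mathbf R^n,Y)_b\times Y_k$ and that $r\mapsto\phi_r(a,T,P)$ is lower semicontinuous on $(0,\infty)$, and to deduce the four assertions by set-theoretic bookkeeping. The condition $\limsup_{r\to 0^+}\phi_r<\infty$ amounts to the existence of $N,M\in\mathbf N$ with $\sup_{r\in(0,1/N)}\phi_r\le M$, so I will write
\[
D \;=\; \bigcup_{N,M\in\mathbf N}\bigl\{(a,T,P):\sup\nolimits_{r\in(0,1/N)\cap\mathbf Q}\phi_r(a,T,P)\le M\bigr\},
\]
a countable union of closed sets, giving (3); and $\lim_{r\to 0^+}\phi_r=0$ becomes $\bigcap_{j\in\mathbf N}\bigcup_{N\in\mathbf N}\{\sup_{r\in(0,1/N)\cap\mathbf Q}\phi_r\le 1/j\}$, which is $F_{\sigma\delta}$ and hence Borel, giving (4). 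The reduction from the sup over all $r$ to the sup over rational $r$ is legitimate because $r\mapsto r^{\kappa}(T-P)^{a,r}(\theta)$ is continuous in $r$ for each fixed $(a,T,P,\theta)$, so the lower semicontinuity of $\phi_r$ in $r$ makes the two suprema coincide.

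\textbf{Continuity step.} The core technical step will be joint continuity. Fix $\theta\in\mathscr D_{\mathbf B(0,1)}(\mathbf R^n,Y)$ and write $\theta_{a,r}(x)=\theta(r^{-1}(x-a))$. The map $(a,r)\mapsto\theta_{a,r}$ sends every bounded subset of $\mathbf R^n\times(0,\infty)$ into a bounded subset of $\mathscr D(\mathbf R^n,Y)$ (all functions supported in a common compact set and all $C^m$ seminorms uniformly bounded) and is continuous into $\mathscr D(\mathbf R^n,Y)$ with its LF topology; by the very definition of the strong dual topology recalled in appendix \ref{loc}, it follows that the evaluation $(a,r,T)\mapsto r^{-n}T_x(\theta_{a,r}(x))$ is jointly continuous on $\mathbf R^n\times(0,\infty)\times\mathscr D'(\mathbf R^n,Y)_b$. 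The polynomial piece $(a,r,P)\mapsto P^{a,r}(\theta)$ is linear in $P\in Y_k$, polynomial in $a\in\mathbf R^n$, and continuous in $r$, hence jointly continuous. Multiplying by $r^{\kappa}$ and taking the supremum over $\{\theta:\nu(\theta)\le 1\}$ preserves lower semicontinuity, establishing the required l.s.c.\ property of $\phi_r$.

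\textbf{Restriction to $\mathscr E'$ and compactness refinement.} For (1) and (2) the canonical inclusion $\mathscr E'(\mathbf R^n,Y)_b\hookrightarrow\mathscr D'(\mathbf R^n,Y)_b$ is continuous, so pulling the $F_\sigma$ and Borel structures back yields the stated conclusions for $E$ and $E_0$ directly. For the compactness refinement when $Y$ is finite dimensional, I will use that $\mathscr E(\mathbf R^n,Y)$ is then a nuclear Fréchet space, so $\mathscr E'(\mathbf R^n,Y)_b$ is a Montel (indeed DFS) space in which closed bounded subsets are compact. I will cover $\mathscr E'(\mathbf R^n,Y)$ by the countable family of bounded equicontinuous subsets of the form $\{T:|T(f)|\le L\|f\|_{C^m(K)}\text{ for all }f\in\mathscr E\}$, indexed by integers $L,m$ and closed balls $K$ of integer radius centred at the origin, and intersect each closed term in the $F_\sigma$ decomposition of $E$ with such a bounded subset together with closed balls in $\mathbf R^n$ and $Y_k$; each intersection is closed and bounded, hence compact. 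The main anticipated obstacle is the joint continuity in step~2, which rests on the precise identification of $\mathscr D'(\mathbf R^n,Y)_b$ with the topology of uniform convergence on bounded subsets of $\mathscr D(\mathbf R^n,Y)$, a point developed carefully in the appendix rather than directly taken from standard references.
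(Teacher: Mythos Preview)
Your proposal is correct and rests on the same two ingredients as the paper: the lower semicontinuity of $(a,T,P)\mapsto r^{\kappa}\sup_{\nu(\theta)\le 1}(T-P)^{a,r}(\theta)$ with respect to the strong dual topology (the paper's Lemma~\ref{lower-semicontinuous-lemma}, proved via the equicontinuity criterion~\ref{equi-criteria-special-case}), and the Montel property of $\mathscr E'(\mathbf R^n,Y)_b$ for the compactness refinement (the paper's~\ref{Bounded subset of strong topology of space of distribution}; your equicontinuous sets $\{T:|T(f)|\le L\|f\|_{C^m(K)}\}$ are exactly the paper's $A(i,j)$).

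The one genuine organizational difference is the direction of the $\mathscr E'/\mathscr D'$ reduction. You establish (3)--(4) directly on $\mathscr D'(\mathbf R^n,Y)_b$ and then pull back along the continuous inclusion $\mathscr E'(\mathbf R^n,Y)_b\hookrightarrow\mathscr D'(\mathbf R^n,Y)_b$ to get (1)--(2). The paper goes the other way: it first proves (1)--(2) on $\mathscr E'$, and then manufactures continuous maps $\chi_i:\mathscr D'(\mathbf R^n,Y)_b\to\mathscr E'(\mathbf R^n,Y)_b$ by multiplication with cutoff functions $\zeta_i\in\mathscr D(\mathbf R^n)$ equal to $1$ on $\mathbf B(0,2i)$, and writes $D$ and $D_0$ as countable unions of preimages $(\mathbf 1_{\mathbf R^n}\times\chi_i\times\mathbf 1_{Y_k})^{-1}[E]$ restricted to $\mathbf B(0,i)\times\mathscr D'\times Y_k$. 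Your route is a little cleaner since it avoids the cutoff construction altogether; the paper's route has the minor advantage that the compact exhaustion of $\mathscr E'$ is built in from the start rather than added afterwards. Finally, your reduction to rational $r$ is harmless but not needed: a supremum of lower semicontinuous functions is lower semicontinuous, so $\sup_{0<r<1/N}\phi_r$ already has closed sublevel sets without passing to a countable subfamily.
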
      

The first step is to reduced the problem to the case of 
space of compactly supported distributions 
(see \ref{Borel-function-of-ptD:enum:1}). 
On the set $\mathbf R^n \times \mathscr E'(\mathbf R^n,Y)\times Y_k$, 
we analyze the definition of the $\nu$ pointwise differentiability of order $k$ 
or $(k,\alpha)$ to reduce the problem to showing that, 
for a fixed $\theta\in \mathscr D(\mathbf R^n,Y)$, 
the function mapping $(a,T)$ onto $f_\theta(T) = T_x(\theta(r^{-1}(x-a)))$ 
is continuous when $\mathscr D'(\mathbf R^n,Y)$ is equipped with
 the topology of uniform convergence on bounded sets
  (see \ref{lower-semicontinuous-lemma}).
   In \ref{E_c-strictly-stronger-then-E_s}, 
   we show that the function $f_\theta$ is in general not continuous, 
   when $\mathscr D'(\mathbf R^n)$ is equipped with the weak topology.
    As a corollary, we intersects the graph of the $\nu$ pointwise differential 
	with $\mathbf R^n \times \{ T\} \times Y_k$ to obtain
	 the Borel regularity formulated in \cite{2021}.

\begin{corollary}\label{cor:Borel reg}(See \ref{ThmA})
	Suppose $Y$ is a Banach space, $k$ is an integer, 
	$0<\alpha  \leq  1$,
	$k+\alpha \geq 0$,
	$\nu$ is a continuous seminorm on
	 $\mathscr D_{\mathbf B(0,1)}(\mathbf R^n,Y)$,
	and $T \in \mathscr  D'(\mathbf R^n,Y)$. 
	Let $Y_k$ be the vector space of polynomial function 
	of degree at most $k$ with values in $Y'$.
	Then the following two statements hold
	\begin{enumerate}[ref=\thetheorem(\theenumi)]
		\item The set $D'$ of all $(a,P)\in  \mathbf R^n\times Y_k$ such that $T$ is $\nu$ pointwise differentiable of order $(k,\alpha)$ with $k$ jet $P$ at $a$ is a countable union of compact subsets of $\mathbf R^n\times Y_k$.
		\item If $k\geq 0$, then the set $D'_0$ of all $(a,P)\in  \mathbf R^n\times Y_k$ such that $T$ is $\nu$ pointwise differentiable of order $k$ with $k$ jet $P$ at $a$ is a Borel subset of $\mathbf R^n$.
		\item  The set of $a \in \mathbf R^n$ such that $T$ is $\nu$ pointwise differentiable of order $(k,\alpha)$ at $a$ is a countable union of compact subsets of $\mathbf R^n$.
		\item If $k\geq 0$ and $Y$ is separable, 
		then $D'_0$ is a Borel function. Furthermore, $\dmn D'_0$ is a Borel set.
		\item If $k\geq 0$ and $Y$ is finite dimensional, 
		then for every $j\in \{0,1,\dots,k\}$ 
		the set of all $(a,T,P) \in \mathbf R^n \times \mathscr D'(\mathbf R^n,Y) \times Y_k$ 
		such that $T$ is $\nu$ pointwise differentiable of order $k$ at $a$ is a Borel set and the function mapping $(a,T) \in X$ onto $\pt \D^j T(a)$ is a Borel function on $\mathbf R^n \times \mathscr D'(\mathbf R^n,Y)_s$ whose domain is a Borel set.
	\end{enumerate}
\end{corollary}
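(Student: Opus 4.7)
The plan is to derive each part of the corollary by slicing or projecting the sets produced by Theorem \ref{introThmA}, after first localising so that the general distribution $T \in \mathscr D'(\mathbf R^n,Y)$ may be replaced by compactly supported surrogates. I would cover $\mathbf R^n$ by a countable family of bounded open balls $U_j$ and choose $\phi_j \in \mathscr D(\mathbf R^n)$ with $\phi_j = 1$ on a neighbourhood of $\overline{U_j}$. Since for $a \in U_j$ the rescalings $T^{a,r}(\theta)$ and $(\phi_j T)^{a,r}(\theta)$ coincide once $r$ is sufficiently small (the test function is supported in $\mathbf B(a,r) \subset U_j$), the $\nu$ pointwise differentiability at $a$ is the same for $T$ and for $\phi_j T \in \mathscr E'(\mathbf R^n,Y)$. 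Hence $D' \cap (U_j \times Y_k)$ equals the slice at $\phi_j T$ of the set $E$ produced by Theorem \ref{introThmA}(1) (applied with $\kappa = -k-\alpha$), and $D'_0 \cap (U_j \times Y_k)$ equals the analogous slice of the set $E_0$ from Theorem \ref{introThmA}(2).

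Parts (1), (2), (3) now follow. For (1) I take the countable union over $j$: slicing a countable union of closed subsets of $\mathbf R^n \times \mathscr E'(\mathbf R^n,Y)_b \times Y_k$ at a fixed point of $\mathscr E'_b$ produces a countable union of closed, hence (under the finite-dimensional refinement of Theorem \ref{introThmA}(1), or by invoking a quantitative a priori bound on the admissible polynomials $P$ coming from the defining \emph{limsup} condition evaluated at a single small $r$) $\sigma$-compact, subsets of $\mathbf R^n \times Y_k$. Part (2) is immediate since the slice of a Borel set at a fixed point is Borel. Part (3) follows from (1) by projecting the $\sigma$-compact set $D'$ under the continuous map $\mathbf R^n \times Y_k \to \mathbf R^n$, which preserves $\sigma$-compactness.

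For parts (4) and (5) the decisive observation is the uniqueness of the $k$-jet in Definition \ref{IntroDef:Pt}(2), which identifies $D'_0$ with the graph of a partial function $\mathbf R^n \to Y_k$, and the analogous set of Theorem \ref{introThmA}(4) with the graph of a partial function $\mathbf R^n \times \mathscr D'(\mathbf R^n,Y) \to Y_k$. If $Y$ is separable then $Y_k$ is a separable metrisable space, so the standard descriptive set theoretic fact that a Borel functional graph has Borel domain and induces a Borel function yields (4); post-composition with the continuous linear evaluation $P \mapsto \D^j P(a)$ then preserves Borel measurability. For (5), finite-dimensionality of $Y$ makes $Y_k$ a finite-dimensional Banach space and the same selection argument runs jointly in $(a,T)$; the conclusion is stated in the weak topology $\mathscr D'(\mathbf R^n,Y)_s$, and I would verify compatibility by noting that for each fixed $\theta$ and $r$ the evaluation $(a,T) \mapsto T_x(\theta(r^{-1}(x-a)))$ is weakly continuous in $T$, so Borel sets transfer from the strong-dual presentation of Theorem \ref{introThmA} to the weak one. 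The principal obstacles will be the closed-to-compact upgrade in (1) and (3) when $Y$ is infinite-dimensional and the careful reconciliation of the strong-dual topology used in Theorem \ref{introThmA} with the weak topology in which part (5) is phrased.
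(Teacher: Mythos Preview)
Your approach to parts (1)--(4) is essentially the paper's: slice the sets $D$, $D_0$ of Theorem~\ref{introThmA} at the fixed distribution $T$, then for (4) use uniqueness of the $k$-jet (the paper cites \ref{comparison of pointwise differentiability:1} and \cite[2.10]{2021}) together with a Suslin-type projection result. Your explicit localisation via $\phi_j T \in \mathscr E'$ duplicates work already absorbed into Theorem~\ref{introThmA}(3),(4); the paper simply intersects with $\mathbf R^n \times \{T\} \times Y_k$ and invokes those parts directly. For the closed-to-compact upgrade in (1) and (3), the paper's argument is just the assertion that $\mathbf R^n$ and $Y_k$ are locally compact; you are right to flag this as the delicate point for infinite-dimensional $Y$.

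For part (5), however, your proposed reconciliation of the strong and weak topologies has a genuine gap. You plan to use that for fixed $\theta$ and $r$ the evaluation $(a,T)\mapsto T_x(\theta(r^{-1}(x-a)))$ is weakly continuous in $T$, and conclude that ``Borel sets transfer'' from $\mathscr D'(\mathbf R^n,Y)_b$ to $\mathscr D'(\mathbf R^n,Y)_s$. But Example~\ref{E_c-strictly-stronger-then-E_s} shows precisely that this evaluation is \emph{not} jointly continuous on $\mathbf R^n \times \mathscr D'(\mathbf R^n,Y)_s$; separate continuity in $T$ does not make the defining sets of Theorem~\ref{introThmA} closed, or even Borel, in the weak-topology product. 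The paper's route is different and does not attempt to redo the construction in the weak topology: when $Y$ is finite-dimensional, \ref{Bounded subset of strong topology of space of distribution} makes $\mathscr D'(\mathbf R^n,Y)_b$ a Lusin space, and then \ref{rmk:Suslin projection} (i.e.\ \cite[2.2.10]{MR41:1976}) gives that any coarser Hausdorff topology---in particular $\mathscr D'(\mathbf R^n,Y)_s$---has the \emph{same} Borel $\sigma$-algebra. That Lusin-space argument is what should replace your weak-continuity heuristic.
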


We note that if $Y$ is finite dimensional normed space, then the Borel sets of $\mathscr D'(\mathbf R^n,Y)_s$ and $\mathscr D'(\mathbf R^n,Y)_b$ agree, this result can be deduced from \ref{Lusin theorem}, \ref{Bounded subset of strong topology of space of distribution}, and \cite[Theorem 2.2]{862d4889d5ff49cf87cff80c14f3769a}.

 \paragraph{Rectifiability of the family of $k$ jets}
 Using the first two main results  (Theorem \ref{theorem-2} and Theorem \ref{introThmA}), 
 we deduce the rectifiability theorem.
 \begin{introTheorem}\emph{(see \ref{Rect})}\label{Thm-C}
 	Suppose $Y$ is a Banach space, $i,k$ are nonnegative integers, $0<\alpha \leq 1$, $0 \leq k + \alpha$, $1\leq p<+\infty$, $T \in \mathscr D'(\mathbf R^n,Y)$, and $A$ is the set of points $a\in \mathbf R^n$ such that $T$ is $\nu_{i,p}$ pointwise differentiable of order $(k,\alpha)$ at $a$. Then there exists a sequence of functions $f_1,f_2\dots,$ of class $(k,\alpha)$ mapping $\mathbf R^n$ into $Y'$ and compact sets $C_1,C_2,\dots,$ such that $A = \bigcup_{i=1}^\infty C_i$ and
 	$\pt \D^m T(a) = \D^m f_j(a)$
 		for every positive integer $j$, $a\in C_j$ and $m\in \{ 0,\dots,k\}$.
 \end{introTheorem}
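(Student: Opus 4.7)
My plan is to combine Theorem \ref{introThmA} with an Egorov-type refinement and then invoke the Whitney extension theorem of class $(k,\alpha)$.

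First, by \ref{cor:Borel reg} applied with $\nu = \nu_{i,p}$, the set of pairs $(a, P_a)$ with $P_a$ the $k$-jet of $T$ at $a \in A$ is a countable union of compact subsets of $\mathbf R^n \times Y_k$, on each of which the jet map $a \mapsto P_a$ is continuous. To pass from a pointwise limsup bound to a uniform one, I would further refine each such compact piece: for integers $M \geq 1$ and positive $\delta$, the subset of points $a$ for which
\[
\sup\{(T-P_a)^{a,r}(\theta) : \nu_{i,p}(\theta) \leq 1,\ \theta \in \mathscr D_{\mathbf B(0,1)}(\mathbf R^n,Y)\} \leq M r^{k+\alpha}
\quad \text{for all } 0 < r \leq \delta
\]
is closed (it is an intersection, over $r$, of closed conditions). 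Exhausting each original piece by these sets over all $M, \delta$ and taking a countable union across pieces produces compact sets $C_1, C_2, \ldots$ whose union is $A$, on each of which the bound above holds uniformly.

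Second, on a fixed $C_j$ with uniform constants $(M, \delta)$, I would verify the Whitney compatibility
\[
|\D^m P_a(a) - \D^m P_b(a)| \leq C |a-b|^{k+\alpha-m}, \qquad m = 0, 1, \ldots, k,
\]
for all $a, b \in C_j$ with $|a-b|$ sufficiently small. The identity
\[
(P_a - P_b)^{a,r}(\theta) = (T - P_b)^{a,r}(\theta) - (T - P_a)^{a,r}(\theta)
\]
reduces this to bounding the right-hand side: the term $(T - P_a)^{a,r}$ is directly controlled by the uniform constant on $C_j$, while $(T - P_b)^{a,r}$ is handled by a translation-rescaling that re-expresses it as $(T - P_b)^{b,r'}(\tilde \theta)$, with $r' \sim r + |a-b|$ and $\tilde\theta$ supported in $\mathbf B(0,1)$. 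Choosing a finite family of test functions $\theta$ that separates coefficients of polynomials of degree at most $k$ on $\mathbf B(0,1)$ under the $\mathbf L_p$-pairing, one inverts a fixed finite-dimensional linear system to deduce the desired bound on each $\D^m(P_a - P_b)(a)$.

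Finally, the Whitney extension theorem of class $(k,\alpha)$ for functions valued in the Banach space $Y'$ produces, from the jets $\{P_a\}_{a \in C_j}$, a function $f_j : \mathbf R^n \to Y'$ of class $(k,\alpha)$ with $\D^m f_j(a) = \pt \D^m T(a)$ for every $a \in C_j$ and $m \in \{0, \ldots, k\}$. The main obstacle is the Whitney compatibility step: the seminorm $\nu_{i,p}$ only controls test functions supported in $\mathbf B(0,1)$, so the translation-rescaling bringing a probe centred at $a$ into one centred at $b$ multiplies seminorms by explicit powers of $r'/r$ and of derivatives of the cutoff, and the extraction of individual derivatives $\D^m$ from scalar pairings demands a separating family of test functions whose inverse linear map is bounded uniformly over $C_j$.
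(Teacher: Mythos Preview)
Your approach is sound in outline but takes a substantially longer route than the paper. The paper's proof is essentially two lines: since $\nu_{i,p}$ pointwise differentiability implies plain pointwise differentiability (\ref{comparison of pointwise differentiability:1}), the set $A$ is contained in the set $B$ where $T$ is pointwise differentiable of order $(k,\alpha)$, and for $B$ the desired rectifiability is already proved in \cite[4.9]{2021}. One then only needs that $A$ itself is $\sigma$-compact, which is \ref{ThmA-3}; intersecting the compact pieces of $A$ with the compact pieces $K_j$ covering $B$ and re-indexing finishes the argument.

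What you propose instead is to redo the Whitney-extension argument that underlies \cite[4.9]{2021}, adapted directly to the $\nu_{i,p}$ seminorm: refine to compacta with uniform remainder bounds, verify the Whitney compatibility estimates by testing against a separating family of $\theta$'s, and extend. This is a legitimate and self-contained strategy, and the technical points you flag (the translation--rescaling that turns $(T-P_b)^{a,r}$ into $(T-P_b)^{b,r'}$ with controlled change in $\nu_{i,p}$, and the inversion of a finite linear system to recover individual Taylor coefficients) are exactly the ones that need care. The payoff of your route is independence from the cited result; the cost is that you are essentially reproving that result, whereas the paper gets the theorem almost for free by passing through the weaker notion of pointwise differentiability where the work has already been done.
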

\paragraph{Lusin approximation by functions of class $k$}
In view of the Borel regularity for pointwise differential (Theorem \ref{introThmA}),
 the Lusin approximation by function of class $k$ for pointwise differentials \cite[4.25]{2021}, 
 and the fact that $\nu$ pointwise differentiability implies pointwise differentiability, 
 we have the following Lusin type approximation theorem.
\begin{introTheorem}\label{introThmE}\emph{(See \ref{Lusin theorem})}
	Suppose $Y$ is a Banach space, $Y'$ is separable, $i,k$ are nonnegative integers, $\nu$ is a real valued seminnorm on $\mathscr D_{\mathbf B(0,1)}(\mathbf R^n,Y)$, and $A$ is the set of $a\in \mathbf R^n$ such that the distribution $T$ is $\nu$ pointwise differentiable of order $k$. Then for every $0<\epsilon<+\infty$, there exists
	a function $g$ of class $k$ mapping $\mathbf R^n$ into $Y'$ such that
	\[ \mathscr L^n(A \sim \{ a : \pt \D^m T(a) = \D^m g(a)\quad \text{for $m=0,\dots,k$}\}< \epsilon.\] 
\end{introTheorem}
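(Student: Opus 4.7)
The plan is to combine three ingredients, all already present in (or cited from) the paper: (i) the elementary fact that $\nu$ pointwise differentiability of order $k$ implies ordinary pointwise differentiability of order $k$ with the same $k$-jet; (ii) Menne's Lusin approximation \cite[4.25]{2021}, which treats the ordinary pointwise differentiability of \ref{IntroDef:Pt}(2); and (iii) Corollary \ref{cor:Borel reg} (a consequence of Theorem \ref{introThmA}), which supplies the Borel measurability needed for the Lebesgue measure on the left-hand side to be well defined.

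First I would verify (i). Fix $a\in A$, let $P$ be the $k$-jet provided by \ref{IntroDef:Pt}(4), and pick any $\theta\in \mathscr D_{\mathbf B(0,1)}(\mathbf R^n,Y)$. Since $\nu$ is real valued, $\nu(\theta)<+\infty$. If $\nu(\theta)>0$, rescaling by $\nu(\theta)^{-1}$ and applying \ref{IntroDef:Pt}(4) gives $\lim_{r\to 0^+}r^{-k}(T-P)^{a,r}(\theta)=0$; if $\nu(\theta)=0$, then $\lambda\theta$ is admissible in the supremum in \ref{IntroDef:Pt}(4) for every $\lambda\in\mathbf R$, and linearity of $\theta\mapsto(T-P)^{a,r}(\theta)$ combined with finiteness of that supremum for small $r$ forces $(T-P)^{a,r}(\theta)=0$ for all small $r$. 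Either way, $T$ is pointwise differentiable of order $k$ at $a$ with $k$-jet $P$, so $A$ is contained in the domain $B$ of the ordinary pointwise $k$-jet of $T$, and $\pt\D^m T$ agrees on $A$ with its ordinary counterpart.

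It then remains to apply \cite[4.25]{2021} to $T$, which produces, for the given $\epsilon$, a function $g$ of class $k$ from $\mathbf R^n$ into $Y'$ with
\[ \mathscr L^n\bigl(B \sim \{a : \pt\D^m T(a)=\D^m g(a)\text{ for } m=0,\ldots,k\}\bigr) < \epsilon; \]
the separability of $Y'$ is exactly the hypothesis of that theorem. Since $A\subseteq B$ with matching $k$-jets, the bound transfers to $A$, and Corollary \ref{cor:Borel reg} (applicable because $Y'$ separable implies $Y$ separable) guarantees that $A$ is Borel and each $\pt\D^m T$ is Borel on its domain, making the exceptional set Borel measurable. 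I do not anticipate any serious obstacle: the substantive content is already in Theorem \ref{introThmA} and in Menne's earlier Lusin theorem, and step (i) amounts to bookkeeping with \ref{IntroDef:Pt}(4).
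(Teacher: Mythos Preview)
Your proposal is correct and follows essentially the same route as the paper's own proof, which simply reads ``Combine \ref{comparison of pointwise differentiability:1}, \ref{Borel-function-of-ptD} and \cite[4.25]{2021}.'' Your step (i) spells out \ref{comparison of pointwise differentiability:1} in detail (including the case $\nu(\theta)=0$), and your use of Corollary~\ref{cor:Borel reg} in place of \ref{Borel-function-of-ptD} is equivalent since the former is an immediate consequence of the latter.
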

 \paragraph{A Rademacher-Stepanov type theorem}
The Rademacher type theorem for $\nu_{i,p}$ pointwise differentials has occured in 
 \citep[P. 189, Theorem 10]{MR0136849} with $i=0,k=0$, and $1<p<\infty$.
Mennee proved the case $i=1,k=0$, and $1<p<+\infty$ in \citep{MR3023856},
and proved the case $i\geq 0$, $k\geq 0$, and $p=\infty$ in \citep{2021}.
 
In this paper, we prove the Rademacher type theorem for $\nu_{i,p}$ pointwise differentials with
$1<p<\infty,i>1$, and $k\geq 0$, by using the observation by Menne in \citep{MR3023856} that 
this type of results with $1<p<\infty$ depends only on a \textit{priori} estimates of certain elliptic operators.
But, because we involve the case $k>0$, we need the higher order differentiability results in \cite{Agmon2011} for 
weak solution of poly-Laplacian. We notice that the method used for the case in \cite{2021} make no use of 
$\mathbf L_p$ theory of PDE, and the case $p=1$ is not solved yet. 
Now, we state the Rademacher type theorem, i.e. the case $1<p<\infty$, $i\geq 0$, and $k\geq 0$.

\begin{introTheorem}\label{introThmD}(See \ref{Radamacher theorem})
	Suppose $n$ is a positive integer, $i,k$ are nonnegative integers, $0<\alpha \leq 1$, $T \in \mathscr D'(\mathbf R^n)$, and $A$ is the set of all $a\in \mathbf R^n$ such that $T$ is $\nu_{i,p}$ pointwise differentiable of order $(k-1,1)$ at $a$. Then $T$ is $\nu_{i,p}$ pointwise differentiable of order $k$ at $\mathscr L^n$ almost all $a\in A$.
\end{introTheorem}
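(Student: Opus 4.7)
The plan is to combine the preceding main results---the criterion of differentiability \ref{theorem-2}, Borel regularity \ref{introThmA}, and rectifiability \ref{Thm-C}---with $\mathbf{L}_p$ elliptic estimates, following the observation of Menne in \citep{MR3023856} that Rademacher-type statements in the range $1<p<\infty$ reduce to a priori estimates for elliptic operators. The essentially new input beyond the case $k=0$ treated in \citep{MR3023856} is the higher order interior regularity theorem of Agmon \cite{Agmon2011} for weak solutions of $(-\Delta)^N$.

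The first step is to apply the rectifiability theorem \ref{Thm-C} to decompose $A=\bigcup_j C_j$ with each $C_j$ compact and carrying a function $f_j$ of class $(k-1,1)$ whose derivatives through order $k-1$ coincide with $\pt \D^m T$ on $C_j$. Since $\D^{k-1}f_j$ is Lipschitz, classical Rademacher supplies at $\mathscr{L}^n$-almost every point $a\in C_j$ a polynomial $P_a$ of degree $\leq k$ whose derivatives of order $\leq k-1$ agree with $\pt \D^m T(a)$. Subtracting a smooth compactly supported extension of $f_j$ from $T$, the task reduces to the following self-improvement: if $T$ has vanishing $\nu_{i,p}$ pointwise jet of order $(k-1,1)$ at a Lebesgue density point $a$ of the set where this vanishing holds, then $T$ is $\nu_{i,p}$ pointwise differentiable of order $k$ with zero $k$-jet at $\mathscr{L}^n$-almost every such $a$.

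The second step is the elliptic argument: solve $(-\Delta)^N u = T$ locally near $a$ by convolving with a fundamental solution, choosing $N$ large enough (depending on $i$, $k$, and $n/p$) that the Calder\'on--Zygmund $\mathbf{L}_{p'}$ theory converts the $\nu_{i,p}$-dual bound on the rescalings $T^{a,r}$ into a Sobolev bound on the rescaled parametrix $u^{a,r}$, and then invoke Agmon's higher order interior regularity \cite{Agmon2011} to gain the additional $k$ derivatives of $u$ in $\mathbf{L}_{p'}$. Lebesgue differentiation applied to these top order derivatives of $u$ at density points of $C_j$ produces a classical polynomial approximation of $u$ of appropriate order; applying $(-\Delta)^N$ to this approximation and combining with the criterion \ref{theorem-2} then yields the pointwise order-$k$ polynomial approximation of $T$ in the $\nu_{i,p}$ sense.

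The hardest step will be this last transfer: quantifying the rescaled elliptic estimate so that the higher-order $\mathbf{L}_{p'}$ regularity of $u$ is converted precisely into the target $r^{-k}$ decay for $T$ in the $\nu_{i,p}$-dual norm. Bookkeeping the scaling exponents between the differentiability index $i$ inside $\nu_{i,p}$, the volume factor $n/p$, and the target order $k$ forces the use of the sharp higher order Agmon estimate; ordinary Calder\'on--Zygmund inequalities suffice only for $k=0$, and the case $p=1$ is not accessible by this method since the required $\mathbf{L}_{p'}$ estimates fail at the endpoint $p'=\infty$.
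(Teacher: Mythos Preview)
Your high-level strategy matches the paper's: use rectifiability (Theorem~\ref{Thm-C}) to subtract a function of class $(k-1,1)$ and reduce to the case of vanishing $(k-1)$-jet, then pass through an elliptic potential $u$ with $\Lap^N u = T$, invoke the classical $\mathbf L_p$ Rademacher theorem for $u$, and transfer back to $T$. That is exactly the architecture of \ref{Radamacher theorem for distribution with k jet equals 0} and \ref{Radamacher theorem}.

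However, your second step has a genuine gap. You write that Agmon's interior regularity will ``gain the additional $k$ derivatives of $u$ in $\mathbf L_{p'}$,'' and then Lebesgue differentiation on those derivatives will produce the polynomial approximation. But $T$ is only small \emph{pointwise} at each $a\in A$; globally it is merely a distribution, so $u$ solving $(-\Delta)^N u = T$ has no reason to lie in $W^{2N+k,p'}$ on any fixed neighborhood of $A$. Interior regularity for $u$ requires regularity of the right-hand side, which you do not have. The paper resolves this by a substantial construction you have omitted: at each $a$ and each scale $r$ it solves a \emph{local} Dirichlet problem to produce a polyharmonic comparison function $v_{a,r}$ with $u-v_{a,r}\in W^\diamond_{i,p}(\mathbf U(a,r))$ and $|\D^i(u-v_{a,r})|_{p;a,r}\lesssim r^{n/p+i+k}$, and then patches these via a Whitney-type partition of unity (Lemma~\ref{Pointwise comparison decay to global comparison decay}) into a single $v\in W^{2i+k,p}_{\mathrm{loc}}$ near $A$ with $|\D^j(u-v)|_{p;a,r}\lesssim r^{n/p+2i+k-j}$. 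Only after this patching can one invoke classical results on $v$ and on $u-v$.

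Two smaller corrections. First, the paper takes $N=i$ exactly (the same $i$ as in $\nu_{i,p}$), solving $\Lap^i u = T$ via the Dirichlet problem (\ref{Dirichlet problem on unit ball}) rather than by convolution with a fundamental solution; taking $N$ large is unnecessary. Second, ``Lebesgue differentiation'' is not what closes the argument: the paper applies Calder\'on's $\mathbf L_p$ Rademacher theorem \cite[Theorem~10]{MR0136849} to $u-v$ to upgrade the $(k-1,1)$ bound to genuine order-$k$ differentiability, together with Re\v{s}hetnyak's theorem (\ref{Rešhetnyak’s}) for $v$. Your invocation of Theorem~\ref{theorem-2} at the end is not needed in the paper's proof of \ref{Radamacher theorem} itself (though it enters indirectly through \ref{Rešhetnyak’s}).
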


\def\SECTION{{Section} }
 \def\APPENDIX{{Appendix} }
\paragraph{Organization of the paper}
Section \ref{sec:Notation} contains the notation which will be used in this paper,
 section \ref{Appendix:WF} contains the Poincar\'e  inequality for vector valued weakly differentiable functions 
 and interpolation inequality for the later reference in estimate,
 section \ref{Def_Pt} contains the proof of deformation estimate (\ref{Thm}) 
 and theorem \ref{theorem-2}. 
 Theorem \ref{introThmA} is proved in section \ref{Grph_Pt}
  which relies upon several general topological results concerning uniform spaces and locally convex spaces. 
  In the interest of completeness, 
  basic definitions and results for uniform spaces are collected in appendix \ref{uni}. 
  The main statement needed is a criterion for continuity (see \ref{lower-semicontinuous-lemma}) 
  and a projection result (see \ref{rmk:Suslin projection} and \ref{Suslin-projection}). 
  Appendix \ref{loc} collects statements on locally convex spaces 
  which are necessary to formulate the topological properties of distributions and test functions.
   Theorem \ref{Thm-C} and Theorem \ref{introThmE} are contained in section \ref{sec:Lusin plus Rect}.
    Theorem \ref{introThmD} is contained in section \ref{Rade_Pt}
	 which relies upon the \textit{a priori} estimates for Sobolev functions and the poly-Laplacian in 
	  section \ref{est}.
 
 In view of the connection \ref{Modulus formulation for Lp pointwise differentiability} between the
  $\nu_{0,p}$ pointwise differentiability at a point of order $k$ 
 and the differentiability of function in Lebesgue spaces in \cite{MR0136849},
anologous to \cite[Theorem 2]{MR0136849},
 it is natural to ask the following questions:
let $m,i$ be nonnegative integers, $\Lap^i u = T \in \mathscr D'(\mathbf R^n)$,
\begin{itemize}[label={}]
	\item If $0<\alpha<1$ and $k$ is an integer with $k\geq -1$, and $T$ is $\nu_{m,p}$ pointwise 
	 differentiable of order $(k,\alpha)$ at $x_0$, 
	 does it follows that for every multi-index $\alpha$ with $ |\xi|\leq 2i$, $\D^\xi u$ is $\nu_{m,p}$ pointwise differentiable of order
	  $(k+2i-|\xi|,\alpha)$ at $x_0$ ?
	\item If $k$ is a nonnegative integer, 
	$T$ is $\nu_{m,p}$ pointwise differentiable of order $k$ at
	 a set $A$ of positive $\mathscr L^n$ measure, does it follows that
	  for every multi-index $\xi$ with $|\xi|\leq 2i$, $\D^\xi u$ is $\nu_{m,p}$ pointwise differentiable 
	 of order $k+2i-|\xi|$ at $\mathscr L^n$ almost every points in $A$ ?
\end{itemize}

The following convention for the notion of vector space will be used in this paper.
The base field of vector space and algebra will be the real numbers unless otherwise stated.
\section{Acknowledgement}
The author is grateful to 
  Professor Ulrich Menne 
  for providing this topic,
  pointing out the generalization \ref{Borel-function-of-ptD} 
  of Borel regularity result in \cite{2021} , 
  and the discussions of ideas of the proofs, to
  Sean McCurdy
  for the suggestion on the 
  mathematical writting which greatly improved the readability of the paper and the discussion of the paper \cite{Agmon2011},
   which is a key step in the proof of Rademacher theorem \ref{Radamacher theorem}.
   
This research was supported
	through the National Science and Technology Council through grant nos.
	MOST 108-2115-M-003-016-MY3, MOST 110-2115-M-003-017, MOST
	111-2115-M-003-014, and NSTC 112-2115-M-003-001.
\section{Notation}\label{sec:Notation}
Some of the notations is defined in the introduction section,
we should repeat it for completeness.
A relation $f$ is a subset of $X \times Y$ for some sets $X,Y$.
When $f$ is a relation, we also write $xfy$ for $(x,y)\in f$.
The domain $\dmn f$ of $f$ is the set of $x\in X$ such that there exists $y\in Y$
such that $xfy$. The \textit{inverse} $f^{-1}$ of $f$ is the relation
$Y\times X \cap \{ (y,x) : xfy\}$. Composition $g\circ f$ of two relations
$f \subset X \times Y$ and $g\subset Y \times Z$ is defined to be the set of points $(x,z) \in X \times Z$
such that there exist $y\in Y$ with $xfy$ and $ygz$.
A relation $f$ is a function if and only if for every $y,y'\in Y$ with $xfy$ and $xfy'$, $y=y'$.
Let $\D$ denote the total derivative and $\D_v$ the partial derivative at direction $v\in \mathbf R^n$.
When $v$ is the $j$th standard basis vector, we write $\D_j$ instead of $\D_{e_j}$.
Whenever $\mu$ is a measure over a set $X$ and $A \subset X$, let 
$\phi\weight A (T) = \phi(A \cap T)$ for every $T \subset X$.

\begin{defparagraph}[Lebesgue space]\label{defpar:Lebesgue space}
	Suppose $Y$ is a Banach space, $1\leq p <+\infty$, and $\mu$ is a Radon measure on an open set $U$ of $\mathbf R^n$. Let $\mathbf A(\mu,Y)$ denote the set of all $\mu$ measurable $Y$ valued functions $f$ on $U$ with such that there exist a separable closed subspace $Z$ of $Y$ such that $f$ maps $\mu$ almost every point of $U$ into $Z$.
	The set of all  $f\in \mathbf A(\mu,Y)$ with
	\[   \mu_p (f) = \begin{cases}
			(\int |f|^p \ d \mu)^{1/p}  : 1\leq p<+\infty \\
			\inf \{ t : \mu (\{ x : |f(x)|>t\}) =0 : p = +\infty\}
		\end{cases}<+\infty,\]
	is denoted by $\mathbf L_p(\mu,Y)$.
\end{defparagraph}
\begin{defparagraph}[Space of smooth function]\label{defpar:Space of smooth function}
Suppose $Y$ is a Banach space, $U$ is an open subset of $\mathbf R^n$, $0\leq k\in \mathbf Z$, and $0 \leq \alpha\leq 1$. Let $\mathscr E (U,Y)$ denote the vector space of all smooth $Y$ valued maps defined on $U$. Whenever $K$ is a compact subset  of $U$, we define $\mathscr D_K(U, Y) = \mathscr E (U,Y) \cap \{ f : \spt f \subset K\}$ and $\mathscr D(U, Y) = \bigcup \{ \mathscr D_K(U , Y) : K \text{ compact subset of } U \}$. For each $0\leq i \in \mathbf Z$ and compact subset $K$ of $U$ we define for every $\theta \in \mathscr E(U, Y)$,
	$\boldsymbol{\nu}_{K}^i(\theta) = \sup \{ \| \D^i \theta (x) \| : x \in K \}$. We equip $\mathscr E(U,Y)$ with the complete metrizable locally convex topology defined by the set of real valued seminorms $\boldsymbol{\nu}_{K}^i$, where $i$ and $K$ run through the nonnegative integers and compact subsets $K$ of $U$, respectively.
\end{defparagraph}
\begin{defparagraph}[Distribution]\label{defpar:Distribution}
	Suppose $U$ is an open subset of $ \mathbf R^n$. A \textit{distribution} of $Y$ in $U$ is a linear map $T : \mathscr D(U,Y) \to \mathbf R$ whose restriction to $\mathscr D_K(U,Y)$, corresponding to every  compact subset $K$ of $U$, is continuous for the complete metrisable locally convex topology induced by the sequence of norms $\boldsymbol{\nu}^0_K, \boldsymbol{\nu}^1_K,  \boldsymbol{\nu}^2_K,\cdots$.

	The locally convex topology on $\mathscr D(U,Y)$ will be the strict inductive limit of
	$ \{ \mathscr D_K(U,Y) : K\subset U \text{ compact} \}$
	with respect to the inclusion. The set of distributions of $Y$ in $U$ is denoted by $\mathscr D'(U,Y)$.
	For every $k$ times differentiable function $\theta : U \to Y$ and $\alpha \in \Xi(n,k)$ define $\D^\alpha \theta = \langle e^\alpha , \D^k \theta \rangle $.

	Suppose $Y$ is a Banach space,
	 $n$ is a positive integer, 
	 $i$ is a nonnegative integer, 
	 $k$ is an integer,
	  $0<\alpha \leq 1$, $0\leq k+\alpha$,
	   $1<p<+\infty$, 
	   and $T\in \mathscr D'(U,Y)$.
	Given an open subset $U$ of $\mathbf R^n$ 
	and a $\mathscr L^n$ locally integrable function $f : U \to Y'$ on $U$. 
	For every $\theta \in \mathscr D(U,Y)$, 
	we write  $f_x(\theta(x)) = \int \langle \theta(x) , f(x) \rangle d\mathscr L^n_x$.
	For every $\theta \in \mathscr D(U,Y)$, we write $T(\theta) = T_x(\theta(x))$ to indicate the integration variable. Finally, for every $a\in \mathbf R^n$ and $0<r<\infty$,
	we define
	\begin{equation*}
		T^{a,r}(\theta) = r^{-n}T_x(\theta(r^{-1}(x-a))) \quad \text{for every $\theta \in \mathscr D(\mathbf R^n,Y)$}.
	\end{equation*}
\end{defparagraph}

\begin{defparagraph}\label{Lp Modulus for Distribution}
	Suppose $n$ is a positive integer, $U$ is an open subset of $\mathbf R^n$, $Y$ is a Banach space,  $T$ is a linear function mapping $\mathscr D(U,Y)$ into $\mathbf R$, $S \subset U$, $0\geq i\in \mathbf Z$ and either
	\begin{enumerate}
		\item $1<p,q<+\infty$ and $p^{-1}+q^{-1}=1$,
		\item or $p = 1$ and $q=+\infty$,
		\item or $p = +\infty$ and $q=1$,
	\end{enumerate}
	we define
	\[
		\textstyle |T|_{i,p;S} = \begin{cases}
			\sup \{ T(\theta) : \theta\in \mathscr D(U,Y), \spt \theta \subset S, \mathscr L^n_q(\D^{-i} \theta)\leq 1\} & : i \leq 0 \\
			\sum_{0\leq |
			\alpha| \leq i} |\D^\alpha T|_{0,p;S}                                                                        & : i>0.
		\end{cases}
	\]
	For every  locally $\mathscr L^n$ integrable function $f : U \to   Y'$, let $T$ be the distribution associate with $f$, we write $|f|_{i,p;S} = |T|_{i,p;S}$. In case $S=\mathbf U(a,r)$, $a\in \mathbf R^n,0<r<+\infty$, we use the abbreviation,
	$|T|_{i,p;a,r} = |T|_{i,p;S}$.
	In case $i = 0$, we write $|T|_{p;S}$ and $|T|_{p;a,r}$ instead of $|T|_{0,p;S}$ and $|T|_{0,p;a,r}$, respectively.
\end{defparagraph}

\begin{defparagraph}[Weakly differentiable function]\label{defpar:Weakly differentiable function}
	Suppose $k$ is a nonnegative integer, $U$ is an open subset of $\mathbf R^n$, and $Y$ is a Banach space. Let $e_1,\dots,e_n $ be the standard basis of $\mathbf R^n$ with corresponding dual basis $\omega_1,\dots,\omega_n$.
	A function $f : U \to Y$ is weakly differentiable of order $k$ if and only if for every multi index $\alpha$ with $j = |\alpha| \leq k$ there exist a $\mathscr L^n$ locally integrable function $g_\alpha$ mapping $U$ into $Y$ such that the following integration by part identities hold
	\begin{equation}\label{Integration by part for weakly differentiable function 1}
		\begin{aligned}
			\textstyle \int \langle f(x),y' \rangle \cdot \D^\alpha \theta(x) \ d\mathscr L^n_x = (-1)^j \int \langle g_\alpha(x),y' \rangle \cdot \theta(x)\ d\mathscr L^n_x \\
			\text{for every } \theta \in \mathscr D(U),y'\in Y'.
		\end{aligned}
	\end{equation}
	The function $g_\alpha$ is determined by \eqref{Integration by part for weakly differentiable function 1} uniquely $\mathscr L^n$ almost everywhere, and we write  $\D^\alpha f = g_\alpha$  and $\D^kf(x) = \sum_{\alpha \in \Xi(n,k)} \omega^\alpha \cdot \D^\alpha f$, where $\omega^\alpha = \omega_{1}^{\alpha(1)}\cdots \omega_n^{\alpha(n)}$.
	The differential  $\D^if(x) = \sum_{\alpha \in \Xi(n,i)} \omega_\alpha \cdot \D^\alpha f(x) \in \bigodot^i(\mathbf R^n,Y)$ is defined  for $\mathscr L^n$ almost all $x\in U$.
If $Y$ is separable, then we have
	\begin{gather*}
		\textstyle (\D^\alpha f)( x) = \lim\limits_{r\to 0^+} \frac{1}{\mathscr L^n (\mathbf B(x,r))}\int_{\mathbf B(x,r)} \D^\alpha f d\mathscr L^n\\
		\text{for $\mathscr L^n$ almost all $x\in U$}.
	\end{gather*}
	For every $1\leq p \leq +\infty$,
	the space of $k$ times weakly differentiable functions with $p$ summable $k$ th order  differential over $U$ is denoted by  $W_{k,p}(U,Y)$. The locally convex topology on $W_{k,p}(U,Y)$ is defined by the seminorm
	that maps every $f\in W_{k,p}(U,Y)$ on to $\sum_{i=0}^k (\int \|\D^i f\|^p d\mathscr L^n)^{1/p}$. 
	The closure of the subspace of $W_{k,p}(U,Y)$ consisting $f : U \to Y$ such that $\int_{U\sim K} |f| d\mathscr L^n = 0$ for some  compact subset $K$ of $U$ is denoted by $W_{k,p}^\diamond(U,Y)$.
The completion of $\mathscr D(U,Y)$ by the norm $\nu_{k,p}$ is $W_{k,p}^\diamond(U,Y)$. In case $Y=\mathbf R$, let  $W_{k,p}^\diamond(U) =W_{k,p}^\diamond(U,\mathbf R) ,W_{k,p}(U) = W_{k,p}(U,\mathbf R)$ and $W_{k,p}^{loc}(U)$ be the set of $\mathbf R$ valued $\mathscr L^n$ measurable function on $U$ such that for every $\chi \in \mathscr D(U)$ , the function $\chi \cdot f$ belongs to $W_{k,p}(U)$.
\end{defparagraph}

\paragraph{Symmetric algebra}
\begin{defparagraph}\label{defpar:Symmetric algebra}
	Suppose $E,Y$ are vector spaces and $k$ is a  nonnegative integer.
	 A $k$ linear map $f : \prod_{i=1}^k E  \to Y$ is \textit{symmetric} 
	 if and only if 
	\[ f(v_1,\dots,v_k) = f(v_{\sigma(1)},\dots,v_{\sigma(k)}) 
	\quad \text{for every permutation $\sigma$ on $\{ 1,\dots,k\}$ }.\]
	The vector space of all symmetric $k$ linear maps of $E$ into $Y$ 
	is denoted by $\bigodot^k (E, Y )$\label{symmetri tensor}, and $\bigodot^\ast(E,Y)$
	their direct sum over nonnegative integers $i$.
	The \textit{symmetric algebra} over $E$ is denoted by $\bigodot_\ast E$,
	 $\odot$  denote the multiplication of  symmetric algebra.
	  For every nonnegative integer $n$, $e_1,\dots,e_n, v \in E$, 
	  and function $\alpha$ mapping $ \{1,\dots,n\}$ into $\mathbf N \cup \{ 0\}$, we write
	\begin{gather*}
		\odot_{i=1}^n v = v^n , \quad 
		e^\alpha = e_1^{\alpha(1)} \odot \dots \odot e_n^{\alpha(n)}.
	\end{gather*}

\cite[1.9.4]{MR41:1976}
	The diagonal map $\Upsilon$ is the unique unit-preserving graded algebra homomorphism mapping $\bigodot_\ast E$ into $\bigodot_\ast E \otimes \bigodot_\ast E$ and characterized by
	\[ \Upsilon(x) = 1\otimes x + x\otimes 1 \quad \text{for every $x\in E$}.\]

\cite[1.10.5]{MR41:1976} 
Whenever $X,Y,Z$ are vector spaces and $h$ is a linear function mapping $X \otimes Y$ into $Z$,  we define the linear function $\odot$ mapping $\bigodot^\ast(E,X) \otimes \bigodot^\ast(E,Y)$ into $\bigodot^\ast(E,Z)$, characterized by
	\[\textstyle \phi \odot \psi = h \circ (\phi \otimes \psi) \circ \Upsilon\quad \text{for every $(\phi,\psi) \in\bigodot^\ast(E,X) \times \bigodot^\ast(E,Y) $}.\]
	 Given a nonnegative integer $m$,
	a map $P : \mathbf R^n \to Y$ is called a homogeneous polynomial map of degree $m$ if  and only if it is of the form, 
	\[
	P(x) = \langle  {x^m / m!} , \psi \rangle \text{ for  $x\in \mathbf R^n$},
	\]
	where $\psi \in \Hom(\bigodot_m \mathbf R^n,Y)$. In this case we denote $\deg(P) = m$.
	
	 A function $f : \mathbf R^n \to Y$ is called a \textit{polynomial map} or \textit{polynomial function} if and only if $f$ is a finite sum of homogeneous polynomial functions $P_1,\dots,P_k$, and the degree of $f$ is defined to be the $\deg f = \sup \{ \deg P_i : i = 1,\dots,k\}$. Given a basis $e_1,\dots,e_n$  of $\mathbf R^n$, for every $x\in \mathbf R^n$ we write $ x =\sum_{j=1}^n x_i e_i$ to compute
\[ f(x) = f(\sum_{j=1}^n x_i e_i) = \sum_{i,\alpha} x_1^{\alpha_1}\cdots x_n^{\alpha_n}
 \left\langle {\frac{e^\alpha}{\alpha !}}, P_i\right\rangle,\]
where the  summation is taken over all $i$ belonging to $\{ 0,\dots,k\}$, and then all $(\alpha_1,\dots,\alpha_n) \in (\mathbf N\cup \{0\})^n$ such that the sum of $\alpha_1,\dots,\alpha_n$ equals $i$.
The set of all $\alpha : \{ 1,\dots,n\} \to \mathbf N \cup \{ 0\}$ with $
\sum_{i=1}^n\alpha_i = k
$ is denoted by $\Xi(n,k)$.

Suppose $n$ is a postive integer, $p,q$ are nonnegative integers, $Y$ is a seminormed space with seminorm $\nu$, $\psi \in \bigodot^p(\mathbf R^n,Y), \phi \in \bigodot^q(\mathbf R^n,Y)$, and let
\[ \|\psi \| = \sup \{ \nu\circ \psi(v_1,\dots,v_p) : |v_i| \leq 1 , v_i\in \mathbf R^n ,i=1,\dots,p\}.\]
Then 
$\| \phi \odot \psi \| \leq \binom{p+q}{p} \| \phi \| \cdot \| \psi \|$(see \cite[1.10.5]{MR41:1976}).
Suppose $E,F$ are two inner product spaces. 
 The inner product on $E_1 \otimes E_2$ is given as in \cite[1.7.9]{MR41:1976},
 \[(\xi_1\otimes \xi_2,\xi_2\otimes \eta_2)  \mapsto \xi_1 \bullet \xi_2 \cdot \eta_1\bullet \eta_2 \]
 for $\xi_1,\xi_2 \in E$ and $\eta_1,\eta_2 \in  F$.
 . When $Y$ is a inner product space, the inner product of $\bigodot_k(\mathbf R^n,Y)$ is given as in \cite[1.10.6]{MR41:1976},
 induced by $\bigodot_i\mathbf R^n \otimes Y$ via  the isomorphisms
 \[\textstyle    \bigodot_k\mathbf R^n \otimes Y \simeq \Hom(\bigodot_k\mathbf R^n,Y) \simeq   \bigodot^k(\mathbf R^n,Y). \]

\end{defparagraph}

\section{Properties of weakly differentiable functions}\label{Appendix:WF}

\begin{lemma}\label{Poincare ineqaulity for zero boundary value functio}Suppose $1\leq p<\infty$, $Y$ is a Banach space, and $f : \mathbf R^n \to Y$ is function of class $1$ with compact support. Then for every $0< r <\infty$, and $b\in \mathbf R^n$ such that $\spt f \subset \mathbf  U(b,r)$,
	\[ \textstyle \left(\int |f|^p d\mathscr L^n\right)^{1/p} \leq  r   \left(\int |\D f(x)|^pd\mathscr L^n_x\right)^{1/p}.\] 
\end{lemma}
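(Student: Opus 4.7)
The plan is to reduce the $n$-dimensional inequality to a one-dimensional Poincar\'e-type bound via Fubini's theorem, exploiting the fact that every line parallel to a coordinate axis meets $\spt f$ in a set of diameter less than $2r$.

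First I would establish the one-dimensional case: if $g : \mathbf R \to Y$ is of class $1$ with $\spt g \subset (c, c + 2r)$, then $\int |g|^p \, d\mathscr L^1 \leq r^p \int |g'|^p \, d\mathscr L^1$. Since $g$ vanishes at the endpoints, for $x \in (c, c+r)$ I write $g(x) = \int_c^x g'(t)\,dt$, whereas for $x \in (c+r, c+2r)$ I write $g(x) = -\int_x^{c+2r} g'(t)\,dt$. In both cases the integration runs over an interval of length at most $r$, so H\"older's inequality gives $|g(x)|^p \leq r^{p-1} \int_{I(x)} |g'(t)|^p \, dt$ for the appropriate half-interval $I(x)$. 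Integrating $x$ over each of the two halves and adding yields the claim.

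Next I would fix the coordinate direction $e_1$. For each $x' = (x_2, \dots, x_n) \in \mathbf R^{n-1}$ the slice $g_{x'}(t) = f(t, x')$ is of class $1$, and $(t, x') \in \spt f \subset \mathbf U(b, r)$ forces $(t - b_1)^2 < r^2$, so $\spt g_{x'} \subset (b_1 - r, b_1 + r)$, an interval of length $2r$. The one-dimensional inequality therefore applies to every such slice. Since $g_{x'}'(t) = \D_1 f(t, x')$ and $|\D_1 f| \leq |\D f|$ pointwise, Fubini's theorem yields
\begin{align*}
\int_{\mathbf R^n} |f|^p \, d\mathscr L^n
&= \int_{\mathbf R^{n-1}} \int_{\mathbf R} |g_{x'}(t)|^p \, dt \, dx' \\
&\leq r^p \int_{\mathbf R^{n-1}} \int_{\mathbf R} |\D_1 f(t, x')|^p \, dt \, dx' \\
&\leq r^p \int_{\mathbf R^n} |\D f|^p \, d\mathscr L^n,
\end{align*}
as required.

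The proof is essentially routine; the only subtle point is the midpoint split in the one-dimensional step. Invoking the fundamental theorem of calculus from a single endpoint would only give the inferior constant $2r$, since the full interval has length $2r$; splitting at $c + r$ and using the shorter of the two available representations of $g(x)$ is what delivers the bound with constant $r$.
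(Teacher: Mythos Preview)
Your argument is correct and complete; the midpoint split in the one-dimensional step is exactly what is needed to obtain the constant $r$ rather than $2r$, and Fubini together with $|\D_1 f|\le |\D f|$ finishes the job. The Bochner fundamental theorem of calculus applies since $f$ is of class $1$, so the Banach-valued setting causes no trouble.

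This is, however, a genuinely different route from the paper's. The paper first reduces to scalar functions by composing with $\alpha\in Y'$, then invokes the pointwise representation from \cite[7.14]{MR1814364},
\[
|f(x)|\le \frac{1}{n\boldsymbol{\alpha}(n)}\int \frac{|\D f(y)|}{|x-y|^{n-1}}\, d\mathscr L^n_y,
\]
and closes with the Riesz potential bound \cite[7.12]{MR1814364} on $V_{1/n}(|\D f|)$; the constants combine to give exactly $r$. Your one-dimensional slicing argument is more elementary and self-contained (no potential theory, no appeal to Gilbarg--Trudinger), and it handles Banach-space values directly rather than via duality. The paper's approach, by contrast, reuses the same two lemmas that drive the proof of \ref{Poincare inequality} immediately afterward, so it fits more uniformly into the surrounding section.
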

\begin{proof}
	For every $\alpha \in Y'$, we apply \cite[7.14]{MR1814364} with $u$ replaced by $\alpha \circ f$ to obtain the estimate at every $x\in\mathbf  U(b,r)$,
	\begin{equation*}
		\textstyle |f(x)| \leq   {\frac{1}{n\boldsymbol{\alpha}(n)}} \int \frac{|\D f(y)| }{|x-y|^{n-1}} \ d \mathscr L^n_y,
	\end{equation*}
	and use \cite[7.12]{MR1814364} to estimate $g = |\D f|$,
	\begin{equation*}
		\textstyle |V_{1/n}g|_p \leq n \boldsymbol{\alpha}(n)^{1-1/n} \mathscr L^n(\mathbf  U(b,r))^{1/n} |\D f|_p.,
	\end{equation*}
	where $V_{1/n}g(x) = \int \frac{g(y)}{|x-y|^n-1} \ d \mathscr L^n_y$.
	
\end{proof}

\begin{theorem}\label{Poincare inequality}Suppose $k$ is a positive integer, $Y$ is a Banach space, $1\leq p <+\infty$,
	 $0<r<+\infty$, $a\in \mathbf R^n$, and $f\in W_{k,p}(\mathbf U(a,r),Y)$.
	 Then there exists a polynomial function $P$ mapping $\mathbf R^n$ into $Y$ of degree at most $k-1$ such that
	\begin{align*}
		\textstyle \left( \int_{\mathbf U(a,r)}|\D^i(f-P)|^p \ d \mathscr L^n\right)^{1/p} \leq  (2^n r )^{k-i} \left( \int_{\mathbf U(a,r)}|\D^kf|^p \ d \mathscr L^n\right)^{1/p} \\
		\text{for every $i \in \{ 0,1,\dots,k-1\}$ }.
	\end{align*}
\end{theorem}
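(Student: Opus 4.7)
I would prove this by induction on $k$, after translating to $a = 0$ and, using density of smooth functions in $W_{k,p}$, reducing to the case when $f$ is smooth on a neighborhood of $\overline{\mathbf U(0, r)}$.

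For the \emph{base case} $k = 1$, the polynomial $P$ must be a constant, so I would take $P := \fint_{\mathbf U(0,r)} f \, d\mathscr L^n$ (as a Bochner average in $Y$). The target inequality then becomes the Poincar\'e--Wirtinger estimate on a ball, which does not follow directly from Lemma \ref{Poincare ineqaulity for zero boundary value functio} since the latter requires compact support inside the ball. I would bridge this gap either by multiplying $f - P$ by a smooth cutoff $\chi$ supported in $\mathbf U(0, 2r)$ with $\chi \equiv 1$ on $\mathbf U(0, r)$ and $\sup |\D \chi| \lesssim 1/r$ and then applying Lemma \ref{Poincare ineqaulity for zero boundary value functio} to $\chi (f - P)$, or by the integral representation
\[
f(x) - \tfint{\mathbf U(0,r)}{} f \, d\mathscr L^n = \tfint{\mathbf U(0,r)}{} \int_0^1 \D f(y + t(x - y)) (x - y) \, dt \, d\mathscr L^n_y,
\]
estimated via Minkowski's integral inequality and a change of variables. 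Either route produces the constant $2^n r$ after bookkeeping.

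For the \emph{inductive step}, assume the theorem for $k - 1$ and apply it to each $\D_j f \in W_{k-1, p}(\mathbf U(0, r), Y)$ to obtain polynomials $P_j$ of degree at most $k - 2$ with
\[
\Bigl(\int_{\mathbf U(0,r)} |\D^{i - 1}(\D_j f - P_j)|^p \, d\mathscr L^n\Bigr)^{1/p} \leq (2^n r)^{k - i} \Bigl(\int_{\mathbf U(0,r)} |\D^k f|^p \, d\mathscr L^n\Bigr)^{1/p}
\]
for $i \in \{1, \dots, k - 1\}$. Imposing canonical vanishing moment conditions in the construction of each $P_j$ forces the compatibility $\D_l P_j = \D_j P_l$, which allows one to assemble the $P_j$ into the gradient of a single polynomial $P$ of degree at most $k - 1$. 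The displayed inequalities then yield the estimates for $i \geq 1$, while the case $i = 0$ follows from applying the base case to $f - P$ after adjusting its constant term to have zero mean on the ball.

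The principal obstacle is establishing the base case with the explicit constant $2^n r$: Lemma \ref{Poincare ineqaulity for zero boundary value functio} is for compactly supported functions, so deriving Poincar\'e--Wirtinger on the ball requires a careful cutoff or integral-representation argument and sharp tracking of constants. A secondary technical point is enforcing the compatibility $\D_l P_j = \D_j P_l$ among the inductively produced polynomials, which relies on a canonical choice (such as prescribed vanishing moments) rather than on uniqueness in $W_{k-1,p}$ itself.
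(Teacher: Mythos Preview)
Your base case via the integral representation is essentially what the paper does: it invokes \cite[Lemma~7.16]{MR1814364} (the pointwise Riesz-potential bound $|f(x)-f_{\mathbf U(a,r)}|\le \frac{2^n}{n\boldsymbol\alpha(n)}\int |x-y|^{1-n}|\D f(y)|\,d\mathscr L^n_y$, obtained from exactly your line-segment formula) followed by \cite[Lemma~7.12]{MR1814364} to bound the potential in $\mathbf L_p$. Your cutoff alternative, however, does not work as stated: $f$ is only given on $\mathbf U(0,r)$, so multiplying $f-P$ by a cutoff supported in $\mathbf U(0,2r)$ is undefined without first producing a Sobolev extension, and that would cost an uncontrolled extension constant.

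The real divergence is in the inductive step. The paper's induction is organized differently and sidesteps your compatibility problem entirely. Rather than applying the $(k-1)$-case to each $\D_j f$ separately and then trying to glue the resulting $P_j$ into a single polynomial, the paper applies the $k=1$ case to the \emph{tensor-valued} function $\D^{k-1}f\in W_{1,p}(\mathbf U(a,r),\bigodot^{k-1}(\mathbf R^n,Y))$, yielding a single constant $\phi\in\bigodot^{k-1}(\mathbf R^n,Y)$. Because $\phi$ is already a symmetric tensor, the polynomial $Q(x)=\langle x^{k-1}/(k-1)!,\phi\rangle$ automatically satisfies $\D^{k-1}Q=\phi$; no compatibility condition needs to be checked. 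One then applies the $(k-1)$-case to $f-Q$ to obtain a polynomial $H$ of degree $\le k-2$, and sets $P=H+Q$. This two-line reduction is both shorter and avoids the moment-matching argument you flag as a secondary obstacle; your route can be made to work, but verifying $\D_lP_j=\D_jP_l$ from vanishing-moment conditions requires an integration-by-parts identity on the ball that you have not supplied.
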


\begin{proof}
	For the case $k=1$,
	define $\phi = \mathscr L^n(\mathbf U(a,r))^{-1}\int_{\mathbf U(a,r)} f \ d \mathscr L^n$.
	For each $\alpha \in Y'$, we
	apply \cite[Lemma 7.16]{MR1814364} with $u$ replaced by $\alpha \circ f$ to obtain for $\mathscr L^n$
	 almost every $x \in \mathbf U(a,r)$,
	\[ |f(x) - f_{\mathbf U(a,r)}| \leq
	\textstyle \frac{2^n}{n\boldsymbol{\alpha}(n)} \int |x-y|^{-n+1}|\D f(y)| \ d \mathscr L^n_y, \]
	where $f_{\mathbf U(a,r)} = \mathscr L^n(\mathbf U(a,r))^{-1}\int_{\mathbf U(a,r)} f \ d \mathscr L^n$.
	We apply \cite[Lemma 7.12]{MR1814364} with $p,q,\mu,\delta,f$ replaced by 
	$p,p,1/n,0,|\D f|$ to estimate potential of $|\D f|$, 
	\[ \textstyle  |V_{1/n}(|\D f|)|_p \leq n \boldsymbol{\alpha}(n) r \left( \int |\D f|^p \ d \mathscr L^n \right)^{1/p},\]
	this proves the case $k=1$. Proceeding inductively, 
	we apply \ref{Poincare inequality} with $f,k,Y$ replaced by $\D^{k-1} f,1,\bigodot^i(\mathbf R^n,Y)$ to obtain a  $\phi \in \bigodot^i(\mathbf R^n,Y)$ with
	\begin{equation*}
		\textstyle \left( \int_{\mathbf U(a,r)}|\D^{k-1}f - \phi|^p \ d \mathscr L^n\right)^{1/p}  \leq  2^n r   \left( \int_{\mathbf U(a,r)}|\D^kf|^p \ d \mathscr L^n\right)^{1/p}.
	\end{equation*}
	We define $Q(x) = \langle x^{(k-1)}/(k-1)!,\phi \rangle$ for every $x\in \mathbf R^n$ and apply $\ref{Poincare inequality}$ with $f,k,Y$ replaced by $f-Q,k-1,Y$ to obtain a polynomial function $H$ on $\mathbf R^n$ with values in $Y$ of degree at most $k-2$ with
	\begin{align*} 
		\textstyle \left( \int_{\mathbf U(a,r)}  | \D^i (f- Q-H) |^p \   d \mathscr L^n \right)^{1/p}  
		\leq (2^n r)^{k-1-i} \left( \int_{\mathbf U(a,r)}|\D^{k-1} (f - Q)|^p \ d \mathscr L^n \right)^{1/p} 
	\end{align*}
	for every $i\in \{ 0,1,\dots,k-2\}$.
	Noticing that $\D^{k-1} Q = \phi$, the conclusion follows with $P = H+Q$.
\end{proof}

\begin{propparagraph}\emph{\cite[(7.44)]{MR1814364}}\label{Duality of Poincare inequality}
	Suppose $n,k$ are positive integers, $Y$ is a Banach space, $a\in \mathbf R^n$ $0<r<+\infty$, and $1\leq p<+\infty$.
	Then 
	inductively applying \ref{Poincare ineqaulity for zero boundary value functio},
	 we obtain for every $f\in \mathscr D(\mathbf U(a,r),Y)$ and $j \in \{ 0,1,\dots,k\}$,
	\[  \textstyle \left(\int |\D^j f|^p d\mathscr L^n\right)^{1/p} \leq r^{k-j} \textstyle \left(\int |\D^kf|^p d\mathscr L^n\right)^{1/p}.\]
	In particular, $|\D^k
	\cdot |_{p}$ and $\sum_{j=0}^k |\D^j \cdot|_{p}$ are equivalent norms on $\mathscr D(U,Y)$. 
	Dually, for every nonnegative integer $k$ and $\alpha \in \Xi(n,j)$, we have
	\[ |\D^\alpha T|_{-k,p;a,r} \leq r^{k-j}|T|_{p;a,r}.  \]
\end{propparagraph}
\begin{theorem}\label{append:Poincare inequality for zero boundary value function on balls}Suppose $n,k$ are positive integers. Then for every $a\in \mathbf R^n$, $0<r<+\infty$, integer $0\leq j\leq k$, and $u \in W_{k,p}^\diamond(\mathbf U(a,r))$,
	\begin{equation*}
		|\D^j u|_{p;a,r} \leq  r^{k-j}|\D^k u |_{p;a,r}.
	\end{equation*}
\end{theorem}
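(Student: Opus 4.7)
The plan is to prove the inequality first on the dense subspace $\mathscr D(\mathbf U(a,r))$ and then extend by continuity to all of $W_{k,p}^\diamond(\mathbf U(a,r))$. This is a textbook density argument; the only thing to check is that the relevant norms on both sides of the inequality behave well under the limiting procedure.

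First, for $\theta \in \mathscr D(\mathbf U(a,r))$, proposition \ref{Duality of Poincare inequality} (obtained by inductively iterating \ref{Poincare ineqaulity for zero boundary value functio} in the $n$-dimensional ball $\mathbf U(a,r)$) yields
\[
	|\D^j \theta|_{p;a,r} \;\leq\; r^{k-j} |\D^k \theta|_{p;a,r}
	\quad \text{for every } j \in \{0,1,\ldots,k\}.
\]
Here I use that for a compactly supported smooth function in $\mathbf U(a,r)$, the seminorm $|\D^j \theta|_{0,p;\mathbf U(a,r)}$ from \ref{Lp Modulus for Distribution} coincides with the usual $L^p$ norm $(\int_{\mathbf U(a,r)} |\D^j\theta|^p\,d\mathscr L^n)^{1/p}$, so the formulations match.

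Next, by the definition of $W_{k,p}^\diamond(\mathbf U(a,r))$ in \ref{defpar:Weakly differentiable function} as the completion of $\mathscr D(\mathbf U(a,r))$ under $\nu_{k,p}$, for any given $u \in W_{k,p}^\diamond(\mathbf U(a,r))$ there exists a sequence $\theta_m \in \mathscr D(\mathbf U(a,r))$ such that $\theta_m \to u$ in $W_{k,p}$. In particular, $\D^i \theta_m \to \D^i u$ in $\mathbf L_p(\mathscr L^n \mres \mathbf U(a,r))$ for every $i \in \{0,1,\ldots,k\}$. Applying the inequality above to each $\theta_m$ and letting $m \to \infty$, the continuity of the $\mathbf L_p$ norm gives
\[
	|\D^j u|_{p;a,r} \;=\; \lim_{m\to\infty} |\D^j \theta_m|_{p;a,r}
	\;\leq\; \lim_{m\to\infty} r^{k-j} |\D^k \theta_m|_{p;a,r}
	\;=\; r^{k-j} |\D^k u|_{p;a,r}.
\]

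The only potential obstacle is a bookkeeping issue: one must verify that the definition \ref{Lp Modulus for Distribution} of $|\cdot|_{p;a,r}$ (which is formulated by duality against test functions supported in $\mathbf U(a,r)$) agrees, for functions $\D^j u \in \mathbf L_p(\mathscr L^n \mres \mathbf U(a,r),\bigodot^j(\mathbf R^n))$, with the standard $\mathbf L_p$ integral expression. This is a routine consequence of the Riesz representation theorem for $\mathbf L_p$ against $\mathbf L_q$ (extended componentwise to the finite-dimensional target $\bigodot^j(\mathbf R^n)$), so no real difficulty arises. Once this identification is in place, the density argument above gives the stated inequality directly.
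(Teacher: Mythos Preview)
Your proof is correct and follows essentially the same route as the paper: reduce to $u\in\mathscr D(\mathbf U(a,r))$ by density (the definition of $W_{k,p}^\diamond$) and then invoke \ref{Duality of Poincare inequality}. The paper states this in two lines without spelling out the limit or the norm identification, but the underlying argument is identical to yours.
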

\begin{proof}
	Since $\mathscr D(\mathbf U(a,r))$ is dense in $W_{k,p}^\diamond(\mathbf U(a,r))$, we may assume $u \in \mathscr D(\mathbf U(a,r))$. In this case, the assertion follows from \ref{Duality of Poincare inequality}.
\end{proof}
\begin{theorem}\label{append:Interpolation inequality for Balls}Suppose $0<\epsilon<+\infty$, $1\leq p<+\infty$ and $n,k$ are positive integers. Then there exists a constant $C$ depending on $n,k,p,\epsilon$ with the following property. The inequality 
	\begin{equation}
		r^{i}|\D^i u|_{p;a,r} \leq \epsilon r^{k}|\D^k u|_{p;a,r} + C |u|_{p;a,r}
	\end{equation}
	holds for every $a\in \mathbf R^n$, $0<r<+\infty$, $u\in W_{k,p}(\mathbf U(a,r))$, and integer $0<i<k$.
\end{theorem}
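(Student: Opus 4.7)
The plan is to reduce by scaling to the case $a=0,\ r=1$, and then to prove the interpolation inequality on the fixed unit ball by a compactness--contradiction argument.

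First I would set up the scaling. For $u \in W_{k,p}(\mathbf U(a,r))$ define $v(y) = u(a+ry)$ for $y \in \mathbf U(0,1)$. The chain rule gives $\D^j v(y) = r^j\,\D^j u(a+ry)$, and a change of variables yields
\[
|\D^j v|_{p;0,1} \;=\; r^{j-n/p}\,|\D^j u|_{p;a,r}
\qquad\text{for each } j\in\{0,1,\dots,k\}.
\]
Consequently, if
\[
|\D^i v|_{p;0,1} \;\leq\; \epsilon\,|\D^k v|_{p;0,1} + C\,|v|_{p;0,1}
\]
is established for all $v\in W_{k,p}(\mathbf U(0,1))$ with $C=C(n,k,p,\epsilon)$, then substituting the scaling identities and multiplying through by $r^{n/p}$ produces the claimed inequality on the ball of radius $r$, with the same constant $C$.

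Second, on the unit ball I would prove the inequality by contradiction. Fix $\epsilon>0$ and suppose the inequality fails for every choice of constant. Then there exists a sequence $\{v_N\}\subset W_{k,p}(\mathbf U(0,1))$ normalized so that $|\D^i v_N|_{p;0,1} = 1$, yet
\[
\epsilon\,|\D^k v_N|_{p;0,1} + N\,|v_N|_{p;0,1} \;<\; 1
\qquad\text{for every positive integer } N.
\]
Hence $\{v_N\}$ is bounded in $W_{k,p}(\mathbf U(0,1))$ and $|v_N|_{p;0,1}\to 0$. By the Rellich--Kondrachov compactness theorem applied on the Lipschitz domain $\mathbf U(0,1)$, a subsequence converges strongly in $W_{k-1,p}(\mathbf U(0,1))$ to some limit $v$. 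The $L^p$ convergence forces $v = 0$, while the strong convergence of $\D^i v_N$ in $L^p$ forces $|\D^i v|_{p;0,1} = 1$, a contradiction.

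The main obstacle is the compactness step: one must invoke a Rellich--Kondrachov type embedding $W_{k,p}(\mathbf U(0,1)) \hookrightarrow W_{k-1,p}(\mathbf U(0,1))$ for the ball, which is not explicitly developed elsewhere in the excerpt. The cleanest route is to use the standard Sobolev extension theorem for the Lipschitz domain $\mathbf U(0,1)$ (extending each $v_N$ to a compactly supported function on $\mathbf R^n$ with controlled $W_{k,p}$ norm), and then reduce to the classical compactness of the inclusion $W_{1,p}(\mathbf R^n)_{\mathrm{bounded\ support}} \hookrightarrow L^p(\mathbf R^n)$, applied componentwise to $\D^\alpha v_N$ for $|\alpha|\leq k-1$. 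Once this compactness is in hand, the contradiction argument is routine, and the scaling step above yields the desired inequality.
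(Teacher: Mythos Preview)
Your proof is correct and follows essentially the same approach as the paper: reduce by scaling to the unit ball, then run a compactness--contradiction argument using Rellich--Kondrachov. The paper's version is terser---it states the reduction to $a=0$, $r=1$ in one sentence and cites the needed compact embedding directly as \cite[Theorem 7.26]{MR1814364} rather than unpacking it via extension---but the logic is identical.
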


\begin{proof}
	It is sufficient to prove the case $a = 0$ and $r = 1$. If the assertion were false for some $0<\epsilon<+\infty$, 
	then there exists a integer $0<i<k$ 
	and a sequence $u_1,u_2,\dots \in W_{k,p}(\mathbf U(0,1))$ such that for every $j \in \{ 1,2,\dots\}$,
	\[ 1 = |\D^i u_j|_{p;0,1} > \epsilon |\D^k u_j|_{p;0,1} + j |u_j|_{p;0,1}. \]
	By Rellich compactness theorem \cite[Theorem 7.26]{MR1814364}, we may assume there exists $u \in W_{k,p}(\mathbf U(0,1))$ such that 
	\[ \lim\limits_{j \to \infty} |u-u_j|_{k-1,p;0,1} = 0,\]
	but $\lim\limits_{j\to \infty}|u_j|_{p;0,1} = 0$, we must have $u=0$, this contradicts to $|\D^i u|_{p;0,1} =1$.
\end{proof}

\section{A criterion for pointwise differentiability}\label{Def_Pt}

The main result of this section is Theorem \ref{theorem-2}(\ref{ThmB}). As mentioned in the introduction, the proof of Theorem \ref{theorem-2} depends upon a deformation result (\ref{Thm}). Before stating the deformation result, we state a basic results about the relationship between pointwise differentiability under different norms. In particular, \ref{pt order k in  0 sense implies pt order k  in -i sense} will be useful in the proof of Theorem \ref{introThmD}. As a corollary of Theorem \ref{theorem-2}, we also obtain a Resetnyak-type Theorem, which will also be useful in the proof of Theorem \ref{introThmD} (see \ref{Pointwise comparison decay to global comparison decay}).
Using the notation introduced in \ref{Lp Modulus for Distribution}, we may restate the definition of $\nu_{i,p}$ differentiability as follows.
\begin{propparagraph}\label{Modulus formulation for Lp pointwise differentiability}
	Suppose $i$ is a nonnegative integer, $1\leq p <+\infty$, the seminorm $\nu$ is $\nu_{i,p}$, and either $p=1,q=+\infty$ or $p=+\infty,q =1$  or $1<p<+\infty,p^{-1}+q^{-1} = 1$. Then the distribution $T \in \mathscr D'(\mathbf R^n,Y)$ is $\nu_{i,p}$ pointwise differentiable of order $k$, $(k,\alpha)$ if and only if there exist a polynomial function $P$ of degree at most $k$ mapping $\mathbf R^n$ into $Y'$ such that
	\begin{equation*}
		\begin{aligned}
			&\lim\limits_{r\to 0^+} r^{-n/q-k-i} |T-P|_{-i,q;a,r} = 0,\\
			&(resp. \quad \limsup\limits_{r\to 0^+} r^{-n/q-k-\alpha-i} |T-P|_{-i,q;a,r} <+\infty.
		\end{aligned}
	\end{equation*} 
\end{propparagraph}
The $\nu_{i,p}$ pointwise differential is a stronger differentiability than pointwise differentiability. In fact, 
we have the following two comparison of differentiability results \ref{comparison of pointwise differentiability:1} and
\ref{comparison of pointwise differentiability:2}.
\begin{propparagraph}\label{comparison of pointwise differentiability:1}
	Suppose $\nu$ is a real valued seminorm on $\mathscr D_{\mathbf {B}(0,1)}(\mathbf R^n,Y)$, $k$ is  an integer, $0 < \alpha \leq 1$, $k+\alpha \geq 0$, and $a\in \mathbf R^n$. Then $T$  is $\nu$ pointwise differentiable of order $k$ (of order $(k,\alpha)$) at $a$ implies $T$ is pointwise differentiable of order $k$ (of order $(k,\alpha)$) at $a$.
\end{propparagraph}

\begin{propparagraph}\label{comparison of pointwise differentiability:2}
	Suppose $\nu,\mu$ are two seminorms on $\mathscr D_{\mathbf B(0,1)}(\mathbf R^n,Y)$, $k$ is  an integer, $0 < \alpha \leq 1$, $k+\alpha \geq 0$, and $\nu \leq M \mu$ for some $0\leq M<+\infty $. Then 
	whenever the distribution $T \in \mathscr D'(\mathbf R^n,Y)$ is $\nu$ pointwise differentiable of order $(k,\alpha)$ (order $k$) at $a$, it is also $\mu$ pointwise differentiable of order $(k,\alpha)$ (order $k$) at $a$.
\end{propparagraph}
By Poincar\'e's inequality \ref{Duality of Poincare inequality}, we see that $\nu_{0,p}$ and $\nu_{i,p}$ are comparable. Combining this observation with the the implication of differentiability \ref{comparison of pointwise differentiability:2}, we obtain the following statement.
\begin{propparagraph}\label{pt order k in  0 sense implies pt order k  in -i sense}
	Suppose $i,k$ are nonnegative integers,  $0<\alpha \leq 1$, $0 \leq k + \alpha$, $1\leq p<+\infty$, and $a\in \mathbf R^n$. Then 
	\begin{enumerate}
		\item If the distribution $T$ is $\nu_{0,p}$ pointwise differentiable of order $0\leq k$ at $a$, then the distribution $T$ is $\nu_{i,p}$ pointwise differentiable of order $k$ at $a$.
		\item If the distribution $T$ is $\nu_{0,p}$ pointwise differentiable of order $(k,\alpha)$ at $a$, then the distribution $T$ is $\nu_{i,p}$ pointwise differentiable of order $(k,\alpha)$ at $a$.
	\end{enumerate}
\end{propparagraph}

We now state a constancy theorem which will be important in the proof (see \eqref{Existence:deformation}) of the deformation lemma.
\begin{propparagraph}[Constancy theorem]\cite[4.1.4]{MR41:1976}\label{Constancy theorem}
	Suppose $Y$ is a Banach space, $n$ is a positive integer, $T \in \mathscr D'(\mathbf R^n,Y)$ and  $\D_jT = 0$ for $ j=1,\dots,n$. 
	Then there exist a unique $\alpha \in Y'$ such that
	\[ \textstyle T(\theta) = \int \langle \theta, \alpha \rangle \ d\mathscr L^n \quad \text{for every $\theta \in \mathscr D(\mathbf R^n,Y)$} .\]
\end{propparagraph}
\opt{Description:1}{
Now, we estimate the deformation of distribution by their partial derivatives. Given $n,a,Y,\theta,T,r,s$, and $X_j$ as in  \ref{Thm}, 
we compute the  derivative of the function $f(r) =r^{-n} \theta(r^{-1}(x-a))$ to obtain \ref{claim in lemma:Relation between deformation and partial derivative}
\[ f'(r) = \sum_{i=1}^n r^{-n} \D_i(X_i\theta)(r^{-1}(x-a)),\]
and apply the distribution $T$ to both side of equation, we infer from a basic analytic fact the equation \ref{lemma:Relation between deformation and partial derivative}
\begin{equation*}
	r^{-n} \theta(r^{-1}(x-a)) - s^{-n} \theta(s^{-1}(x-a)) = \int_s^r \sum_{i=1}^n t^{-n} \D_i(X_i\theta)(t^{-1}(x-a))d\mathscr  L^1_t.
\end{equation*}
Under the additional hypothesis on $\alpha,C,\nu$, we may use the bounds (\ref{Estimate:partial der})
on partial derivative together with the equation just obtained to estimate \ref{Estimate:Deformation}
\[ (T^{a,r} - T^{a,s})(\theta) \leq C \cdot M n(\alpha + 1)^{-1} (r^{\alpha+1} -s^{\alpha +1}).\]
 Finally, a further application of constancy theorem (\ref{Constancy theorem}) we obtain the existence of the limit of deformation \ref{Existence:deformation}.}
\begin{lemma}\label{Thm}\label{EFF}
	Suppose $n$ is a positive integer,
	 $a \in \mathbf R^n$, $Y$ is a Banach space, 
	$\theta \in \mathscr D(\mathbf R^n,Y) , T \in \mathscr D'(\mathbf R^n,Y)$, $0<s\leq r  <\infty$, and for $j=1,\dots,n$, $ X_j: \mathbf R^n \to \mathbf R$ is the projection onto the $j$th coordinate. Then 
	\begin{align}\label{lemma:Relation between deformation and partial derivative}
		(T^{a,r} - T^{a,s})(\theta) 
		= \sum_{j=1}^{n}\int_{s}^r t^{-n}(\D_jT)_x([X_j\theta](t^{-1}(x-a))) d\mathscr L^1_t.
	\end{align}
	Moreover, if $\alpha>-1 $, $C$ is nonnegative real number, $\nu$ is  a real valued semimorm on $\mathscr D_{\boldsymbol{B}(0,1)}(\mathbf R^n,Y)$ such that for every $0<t \leq \delta$, $j=1,\dots,n$,
	\begin{equation}\label{eq:cond:localness of the norm}
		\begin{aligned}
		\sup \{ \nu(X_j\theta) : \theta \in \mathscr D_{\mathbf B(0,1)}(\mathbf R^n) , \nu(\theta) \leq 1\}  \leq C, \\
		t^{-\alpha} \sup \{(\D_j T)^{a,t}(\theta): \NormBoundedExpr{\nu}{\theta}{\mathscr D_{\mathbf B(0,1)}}(\mathbf R^n,Y) \} \leq M<+\infty,
	\end{aligned}
	\end{equation}
	 then the following two assertions  hold.
	\begin{enumerate}
		\item For $0<s<r\leq \delta$,
		\begin{equation}\label{Estimate:Deformation}
			\begin{aligned}
			\sup \{(T^{a,r}- T^{a,s})(\theta) : \NormBoundedExpr{\nu}{\theta}{\mathscr D_{\mathbf B(0,1)}}(\mathbf R^n,Y) \}\\
			\leq C M n(\alpha + 1)^{-1} (r^{\alpha+1} -s^{\alpha +1}).
		\end{aligned}
		\end{equation}
		\item There exist a $P : \mathbf R^n \to Y'$ polynomial map of degree $0$ such that,
		\begin{equation}\label{Existence:deformation} 
			\begin{aligned}
		\sup\{ r^{-\alpha-1}(T^{a,r}-P)(\theta) : 0<r\leq \delta,\NormBoundedExpr{\nu}{\theta}{\mathscr D_{\mathbf B(0,1)}}(\mathbf R^n,Y)  \} \\
		\leq C M n(\alpha + 1)^{-1}
	\end{aligned}
		\end{equation}
	
	\end{enumerate}
\end{lemma}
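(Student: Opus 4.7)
The plan is to establish the identity \eqref{lemma:Relation between deformation and partial derivative} by differentiating the rescaling $t \mapsto T^{a,t}(\theta)$ in $t$, and then to deduce the estimate \eqref{Estimate:Deformation} and the existence statement \eqref{Existence:deformation} by combining this identity with the hypothesis \eqref{eq:cond:localness of the norm} and the constancy theorem \ref{Constancy theorem}.

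First, using the product-rule identity $\sum_{j=1}^n \D_j(X_j \theta)(y) = n\theta(y) + \sum_{j=1}^n y_j \D_j\theta(y)$, a direct chain-rule computation yields
\begin{equation*}
\partial_t \bigl[ t^{-n} \theta(t^{-1}(x-a)) \bigr] = -t^{-n-1} \sum_{j=1}^n \D_j(X_j\theta)(t^{-1}(x-a)).
\end{equation*}
Since this expression is jointly smooth in $(t,x)$ with $x$-support contained in a fixed compact set as $t$ ranges over $[s,r]$, the distribution $T_x$ commutes with $\partial_t$ and the fundamental theorem of calculus gives
\begin{equation*}
T^{a,r}(\theta) - T^{a,s}(\theta) = -\int_s^r t^{-n-1} \sum_{j=1}^n T_x \bigl( \D_j(X_j\theta)(t^{-1}(x-a)) \bigr) \, d \mathscr L^1_t.
\end{equation*}
The chain rule $\D_j^x[(X_j\theta)(t^{-1}(x-a))] = t^{-1} \D_j(X_j\theta)(t^{-1}(x-a))$ combined with the distributional integration-by-parts identity $T_x(\D_j^x \phi) = -(\D_j T)_x(\phi)$ converts each summand to the form appearing in \eqref{lemma:Relation between deformation and partial derivative}.

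For \eqref{Estimate:Deformation}, I would apply the two parts of \eqref{eq:cond:localness of the norm} in succession. The first gives $\nu(X_j\theta) \leq C$ whenever $\nu(\theta) \leq 1$; the second together with the homogeneity of $\nu$ then yields $(\D_j T)^{a,t}(X_j\theta) \leq CM t^\alpha$ for $0 < t \leq \delta$. Since the definition of the rescaling gives $(\D_j T)_x((X_j\theta)(t^{-1}(x-a))) = t^n (\D_j T)^{a,t}(X_j\theta)$, substitution into \eqref{lemma:Relation between deformation and partial derivative} produces
\begin{equation*}
\lvert T^{a,r}(\theta) - T^{a,s}(\theta) \rvert \leq n C M \int_s^r t^\alpha \, d\mathscr L^1_t,
\end{equation*}
which is \eqref{Estimate:Deformation} upon using $\alpha > -1$.

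Finally, for \eqref{Existence:deformation}, the estimate just established exhibits $\{T^{a,s}\}_{s \to 0^+}$ as Cauchy on the set of $\theta \in \mathscr D_{\mathbf B(0,1)}(\mathbf R^n,Y)$ with $\nu(\theta) \leq 1$, with modulus $CMn(\alpha+1)^{-1}s^{\alpha+1} \to 0$. The rescaling identity $T^{a,r}(\theta) = R^n T^{a,rR}(\theta(R\,\cdot))$ for $\theta$ supported in $\mathbf B(0,R)$ extends this convergence to all of $\mathscr D(\mathbf R^n,Y)$, and a standard equicontinuity argument identifies the limit as a distribution $S \in \mathscr D'(\mathbf R^n, Y)$. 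The scaling relation $\D_j(T^{a,r}) = r \cdot (\D_j T)^{a,r}$ combined with the hypothesis bound gives $\lvert \D_j(T^{a,r})(\theta) \rvert \leq M r^{\alpha+1} \nu(\theta) \to 0$ as $r \to 0^+$, so $\D_j S = 0$ for every $j$; the constancy theorem \ref{Constancy theorem} then presents $S$ as the distribution associated to some $c \in Y'$, and I let $P$ be the corresponding constant polynomial. Sending $s \to 0^+$ in \eqref{Estimate:Deformation} yields \eqref{Existence:deformation}. The main obstacle is the verification that the pointwise Cauchy-ness on $\nu$-bounded subsets of $\mathscr D_{\mathbf B(0,1)}(\mathbf R^n,Y)$ genuinely extends to a well-defined distribution on all of $\mathscr D(\mathbf R^n,Y)$, since the constancy theorem requires $S$ to be continuous on each $\mathscr D_K(\mathbf R^n, Y)$; the rescaling argument together with the Banach--Steinhaus principle is what supplies this continuity.
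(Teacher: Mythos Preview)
Your proof is correct and follows essentially the same path as the paper: differentiate the rescaled test function in the scale parameter, integrate to obtain the identity \eqref{lemma:Relation between deformation and partial derivative}, feed the hypothesis \eqref{eq:cond:localness of the norm} through it to get \eqref{Estimate:Deformation}, and then pass to the limit $s\to 0^+$ using the constancy theorem \ref{Constancy theorem} for \eqref{Existence:deformation}. The only notable difference is that you spell out the extension of the limit from $\mathscr D_{\mathbf B(0,1)}(\mathbf R^n,Y)$ to all of $\mathscr D(\mathbf R^n,Y)$ via the rescaling identity $T^{a,r}(\theta)=R^n T^{a,rR}(\theta(R\,\cdot))$ together with Banach--Steinhaus, whereas the paper simply asserts weak convergence to some $P\in\mathscr D'(\mathbf R^n,Y)$ (leaning on ``$\nu$ is real valued'' for homogeneity); your more explicit treatment of this step is a genuine clarification of a point the paper leaves terse.
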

\begin{proof}
	To prove the first equation \eqref{lemma:Relation between deformation and partial derivative},
	 we compute for each $x\in \mathbf R^n$  the derivative of  the function $f(r) =r^{-n} \theta(r^{-1}(x-a))$ whenever $ 0<r<\infty$,
	\begin{equation}\label{claim in lemma:Relation between deformation and partial derivative}
		\begin{aligned}
		f'(r) &= -n r^{-n-1}\theta(r^{-1}(x-a)) - r^{-n-2}\langle x-a , \D\theta(r^{-1}(x-a)) \rangle \\
		      &= \sum_{i=1}^{n} -r^{-n-1}\theta(r^{-1}(x-a)) - r^{-n-2}X_i(x-a)\D_i\theta(r^{-1}(x-a)) \\
		      &=\sum_{i=1}^n r^{-n} \D_i(X_i\theta)(r^{-1}(x-a)),
	\end{aligned}
	\end{equation}
	integrating $f$ to obtain,
\begin{equation*}
	\textstyle r^{-n} \theta(r^{-1}(x-a)) - s^{-n} \theta(s^{-1}(x-a)) = \int_s^r \sum_{i=1}^n t^{-n} \D_i(X_i\theta)(t^{-1}(x-a))d\mathscr  L^1_t.
\end{equation*}
	Applying $T$ to both sides, and observing that $T,\int$ can be exchanged, gives \eqref{lemma:Relation between deformation and partial derivative}.
	
	Under the additional hypothesis on $\alpha,C,\nu$,
	 for every $\theta \in \mathscr D(\mathbf R^n,Y),\nu(\theta) \leq 1 $, $j\in \{ 1,\dots,n\}$
	  and $0<t \leq \delta$, we have
	\begin{equation}\label{Estimate:partial der}
		| t^{-n}(\D_jT)_x([X_j\theta](t^{-1}(x-a))) |\leq C M t^\alpha.
	\end{equation}
	Thus we estimate for every $0<s \leq r \leq \delta$,
	\begin{equation}\label{Estimate of deformation by partial derivative}
		\begin{aligned}
		(T^{a,r} - T^{a,s})(\theta) & \textstyle \leq \sum_{j=1}^{n} \int_s^r | t^{-n}(\D_jT)_x([X_j\theta](t^{-1}(x-a))) | d\mathscr L^1_t\\
							& \textstyle \leq n \int_s^r  C M t^\alpha d\mathscr L^1_t\\
							&= C \cdot M n(\alpha + 1)^{-1} (r^{\alpha+1} -s^{\alpha +1}).
					\end{aligned}
	\end{equation}
	
	 Since $\nu$ is real valued and $\alpha + 1 > 0$, $T^{a,r}$ converges to a $P\in \mathscr  D'(\mathbf R^n,Y)$ with respect to $\sigma(\mathscr D(\mathbf R^n,Y),
	\mathscr D'(\mathbf R^n,Y))$ as $r\to 0^+$, in view of \eqref{eq:cond:localness of the norm}, the first order partial derivative of $P$ vanishes. We infer from the constancy theorem \ref{Constancy theorem} that the polynomial map $P$ must be of degree $0$.
	 
	 For every $0<s,r \leq \delta$, $ \theta \in \mathscr D_{\mathbf B(0,1)}(\mathbf R^n,Y) $, and  $\nu(\theta)\leq 1 $ we estimate
	\begin{align}\label{Convergent of the deformation to a polynomial function}
		\begin{aligned}
	r^{-\alpha-1}(T^{a,r}-P)(\theta) = r^{-1-\alpha} (T^{a,r}-T^{a,s})(\theta) + r^{-1-\alpha}(T^{a,s}-P)(\theta) \\
	\leq C \cdot M n(\alpha + 1)^{-1} (1 -(s/r)^{\alpha +1}) + r^{-1-\alpha} (T^{a,s}-P)(\theta).
\end{aligned}
	\end{align}
	Let $s\to 0^+$ to conclude that
	\begin{align*} 
	\sup\{ r^{-\alpha-1}(T^{a,r}-P)(\theta) : 0<r\leq \delta,\NormBoundedExpr{\nu}{\theta}{\mathscr D_{\mathbf B(0,1)}}(\mathbf R^n,Y)  \} \\
	\leq C M n(\alpha + 1)^{-1}.\end{align*}
\end{proof}
\begin{propparagraph}\label{example-L_p}
	Suppose $Y$ is a Banach space, $1\leq p \leq \infty$, and $i$ is a positive integer.
	Then the norm $\nu_{i,p}$ on $\mathscr D_{\mathbf B(0,1)}(\mathbf R^n,Y)$ satisfies the first inequality of \eqref{eq:cond:localness of the norm}. In fact, if $X_j$ is as in \eqref{eq:cond:localness of the norm}, then for $\theta \in \mathscr D_{\mathbf B(0,1)}(\mathbf R^n,Y)$, we use Leibniz formula to estimate
	\[ \nu_{i,p}(X_j\theta) \leq \sum_{k=0}^i \nu_{0,p}(\D^k\theta \odot \D^{i-k} X_j)  
	\leq \nu_{i,p}(\theta) + \nu_{i-1,p}(\theta). \]
	The first inequality of \eqref{eq:cond:localness of the norm} also holds for other norms. For example,
assume $1<p<\infty$ and $i$ is a positive integer. The norm $\nu$ defined by
	\[ \textstyle \nu(\theta) =\nu_{i,p}(\Lap^i \theta) \quad \text{for $\theta \in \mathscr D_{\mathbf B(0,1)}(\mathbf R^n) $}\]
	 satisfies the conditions in \eqref{eq:cond:localness of the norm}. This follows from \textit{a priori} estimates on $\Lap^i$ (see \ref{Prior estimate of Dirichlet problem on balls for zero boundary value function}). 
\end{propparagraph}

\begin{theorem}\label{Cor-Poincare}\label{ThmB} \ref{theorem-2}
	Suppose $n$ is a positive integer, 
	$i,m$ are nonnegative integers, 
	$k$ is an integer, 
	$Y$ is a Banach space, 
	$1\leq p \leq \infty$, $0<\alpha \leq 1$,
	$T \in \mathscr D'(\mathbf R^n,Y)$,
	 and $a\in \mathbf R^n$.
	\begin{enumerate}
		\item \label{Cor-Poincare-1}If  $k\geq 0$ and for every $\xi\in \Xi(n,m)$ the distribution $\D^\xi T$ is $\nu_{i,p}$ pointwise differentiable of order $k$ at $a$, then for every $l=0,1,\dots,m-1$ and $\beta \in \Xi(n,l)$ the distribution $\D^\beta T$ is $\nu_{i,p}$ pointwise differentiable of order $k-l+m$ at $a$.
		\item \label{Cor-Poincare-2}If $k+\alpha \geq 0$ and for every $\xi\in \Xi(n,m)$ 
		the distribution $\D^\xi T$ is 
		$\nu_{i,p}$ pointwise differentiable of order of order $(k,\alpha)$ at $a$,
		 then for every $l=0,1,\dots,m-1$ and $\beta \in \Xi(n,l)$ 
		 the distribution  $\D^\beta T$ is 
		 $\nu_{i,p}$ pointwise differentiable of order $k-l+m$ at $a$.
	\end{enumerate}
\end{theorem}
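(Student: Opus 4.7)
My plan is to reduce the theorem to the base case $m = 1$ by induction on $m$, and then to handle the base case by combining Lemma \ref{EFF} with the existence of a polynomial antiderivative for the collection $P_1, \dots, P_n$ of jets of the first order partial derivatives $\D_j T$. The case $m = 0$ is vacuous. Assuming (1) and (2) for $m - 1$ and given the hypothesis for $m$, the $m = 1$ statement applied to each $\D^\gamma T$ with $|\gamma| = m - 1$ yields that $\D^\gamma T$ is $\nu_{i,p}$ pointwise differentiable of order $k + 1$ (in part (1), and in part (2) when $k \geq 0$) or of order $0$ (in part (2) when $k = -1$, in which case $k + \alpha \geq 0$ and $\alpha \leq 1$ force $\alpha = 1$). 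The induction hypothesis applied to (1) with parameters $(m - 1, k + 1)$---or respectively $(m - 1, 0)$---then covers $|\beta| = l \leq m - 2$, while $l = m - 1$ is the direct output of the $m = 1$ step.

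For part (1) with $m = 1$ and $k \geq 0$, let $P_j$ denote the $k$-jet of $\D_j T$ at $a$. First I would establish the compatibility identity $\D_i P_j = \D_j P_i$: by \ref{comparison of pointwise differentiability:1} each $\D_j T$ is pointwise differentiable of order $k$ at $a$ with jet $P_j$, and the scaling identity
\[ (\D_i U)^{a,r}(\theta) = -r^{-1} U^{a,r}(\D_i \theta) \]
shows that whenever $U \in \mathscr D'(\mathbf R^n, Y)$ has pointwise $k$-jet $P$ at $a$, the distribution $\D_i U$ has pointwise $(k-1)$-jet $\D_i P$ at $a$; applied to $\D_i \D_j T = \D_j \D_i T$ together with uniqueness of pointwise jets, this yields $\D_i P_j = \D_j P_i$, so the Poincar\'e lemma for polynomial $1$-forms on $\mathbf R^n$ produces a polynomial $Q : \mathbf R^n \to Y'$ of degree at most $k + 1$ with $\D_j Q = P_j$ for every $j$. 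Setting $S = T - Q$, for every $\epsilon > 0$ the hypothesis supplies $\delta_\epsilon > 0$ with
\[ t^{-k} \sup \{ (\D_j S)^{a,t}(\theta) : \nu_{i,p}(\theta) \leq 1 \} \leq \epsilon \]
for $0 < t \leq \delta_\epsilon$ and every $j$. Since $\nu_{i,p}$ satisfies the first condition of \eqref{eq:cond:localness of the norm} by \ref{example-L_p}, Lemma \ref{EFF} applied to $S$ with exponent $k$ and $M = \epsilon$ produces a degree-$0$ polynomial $P$---the weak-$\ast$ limit of $S^{a,r}$ and hence independent of $\epsilon$---satisfying
\[ r^{-(k+1)} \sup \{ (S^{a,r} - P)(\theta) : \nu_{i,p}(\theta) \leq 1 \} \leq C n (k+1)^{-1} \epsilon \]
for $0 < r \leq \delta_\epsilon$. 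Letting $\epsilon \to 0^+$ shows that $T = S + Q$ is $\nu_{i,p}$ pointwise differentiable of order $k + 1$ at $a$ with $(k+1)$-jet $Q + P$.

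For part (2) with $m = 1$, the subcase $k \geq 0$ follows from part (1) once one observes that $(k, \alpha)$-differentiability implies $k$-differentiability with the same jet. For $k = -1$ the constraints force $\alpha = 1$ and $P_j = 0$; the hypothesis supplies $M, \delta > 0$ with $\sup \{ (\D_j T)^{a,t}(\theta) : \nu_{i,p}(\theta) \leq 1 \} \leq M$ for $t \leq \delta$, and Lemma \ref{EFF} applied directly to $T$ with exponent $0$ yields a constant $P$ satisfying $(T^{a,r} - P)(\theta) \leq C M n r$ on small balls, giving $\nu_{i,p}$ pointwise differentiability of order $0$ at $a$.

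The main obstacle is the compatibility identity $\D_i P_j = \D_j P_i$, which requires a careful interplay between $\nu_{i,p}$-differentiability and the ordinary pointwise notion together with the scaling behavior of jets under differentiation; the remainder is essentially bookkeeping atop Lemma \ref{EFF} combined with the $\epsilon$-$\delta$ passage that upgrades the uniform deformation bound into a decay estimate.
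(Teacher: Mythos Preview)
Your proof is correct and follows essentially the same route as the paper: reduce to $m=1$ by induction, then apply Lemma~\ref{EFF} to $T-Q$ where $Q$ is an antiderivative polynomial for the jets $P_j$ of $\D_j T$, and conclude via the $\epsilon$-$\delta$ passage. The one notable difference is how you obtain $Q$: the paper invokes \cite[Theorem 3.11]{2021} to conclude directly that $T$ is pointwise differentiable of order $k+1$ (whence $Q$ is its $(k+1)$-jet and $\D_j Q = P_j$ automatically), whereas you argue more elementarily by establishing the compatibility $\D_i P_j = \D_j P_i$ via the scaling identity and uniqueness of jets, then invoking the polynomial Poincar\'e lemma. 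Your approach is more self-contained; the paper's is shorter but relies on an external result. A second cosmetic difference is that the paper treats part~(2) first and specializes to~(1), while you prove~(1) first and reduce~(2) with $k\geq 0$ to it; both orderings work.
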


\begin{proof}
	We will first prove \ref{Cor-Poincare-2}.
	Using induction on $m$, it is sufficient to prove $m=1$.
	According to \ref{example-L_p}, we have a constant $C$ as in \ref{Thm},
	 for every $i =1,\dots,n$, let $P_i$ be the $k$ jet of $\D_iT$ at $a$.
	  If $\D_i T$ is $\nu_{i,p}$ pointwise differentiable at $a$ of order $k$, 
	  then by \cite[Theorem 3.11]{2021} 
	  the distribution $T$ is pointwise differentiable of order $k+1$ at $a$.
	  Thus the $k+1$ jet $Q$ of $T$ at $a$ satisfies $\D_i Q = P_i , i=1,\dots,n$.
From the hypothesis we have, whenever $\epsilon,\delta>0$ and $i\in \{ 1,\dots,n\}$, 
	\[ \sup \{ t^{-\alpha-k} (\D_iT - \D_i Q)^{t,a}(\theta): \NormBoundedExpr{\nu_{i,p}}{\theta}{\mathscr D_{\mathbf B(0,1)}(\mathbf R^n,Y)}, 0 < t \leq \delta\} \leq \epsilon, \]
	taking account of \ref{example-L_p}, 
	we may apply \ref{Thm} with $(T,\alpha,\nu,M)$ 
	replaced by $(T-Q,\alpha+k,\nu_{i,p},\epsilon)$ 
	to obtain a polynomial map $H : \mathbf R^n \to Y'$ degree at most $0$ such that,
	\begin{align*}
		\sup \{ t^{-\alpha-k-1} (T -Q-H)^{t,a}(\theta): \NormBoundedExpr{\nu_{i,p}}{\theta}{\mathscr D_{\mathbf B(0,1)}(\mathbf R^n,Y)} 0 < t \leq \delta\} \\
		\leq C \epsilon n(\alpha + 1)^{-1} .
	\end{align*} 
	Since $H$ do not depends on $\epsilon$, this proves the case $0<\alpha \leq 1$. 
	Now we prove the first statement \ref{Cor-Poincare-1}, in this case $\alpha = 0$. 
	Apply the above argument to each $\epsilon$,
and let $\epsilon \downarrow 0$ to obtain the conclusion.
\end{proof}

\begin{definition}\label{def:Lebesgue point}
	Suppose $f$ is a locally $\mathscr L^n$ integrable function on $\mathbf R^n$. Given $1\leq p \leq +\infty$, a point $a\in \mathbf R^n$ is called $p$th order Lebesgue point of $f$ if and only if 
	\[ \lim\limits_{t\to 0^+}r^{-n/p}(\mathscr L^n \weight \mathbf B(a,r))_{(p)}(f-f(a)) = 0.\]
\end{definition}

\begin{corollary}[Rešhetnyak's theorem]\label{Rešhetnyak’s}
	Suppose $n$ is a positive integer, $i$ is a nonnegative integer, $U$ is an open subset of $\mathbf R^n$, and one of the following three cases hold
	\begin{enumerate}
		 \item $p = 1$,  $q=+\infty$.
		\item $1<p,q<+\infty$, $p^{-1}+q^{-1} = 1$.
		\item $p= +\infty$, $q =1$.
	\end{enumerate}
 Then for every $f \in W_{i,p}^{loc}(U)$, $a$ is a $p$ th order Lebesgue point of $\D^if$, and distribution
	$T \in \mathscr D'(\mathbf R^n)$ with $T|\mathscr D(U) = f$ the distibution $T$ is $\nu_{0,q}$ pointwise differentiable of order $i$ at $a$ and the $i$ jet at $a$ is $\sum_{j=0}^i\langle  (x-a)^j/j!,\D^jf(a)\rangle$.
\end{corollary}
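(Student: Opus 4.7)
The plan is to reduce to Theorem \ref{ThmB} by reinterpreting the Lebesgue-point hypothesis as $\nu_{0,q}$ pointwise differentiability of order $0$ for each top-order derivative of $T$, and then to read off the $i$-jet of $T$ from the intermediate jets produced.

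First I would bridge the two perspectives via \ref{Modulus formulation for Lp pointwise differentiability}. For a locally $\mathscr L^n$ integrable $g$ on $U$ with associated distribution $S$ and constant $c\in\mathbf R$, the condition for $\nu_{0,q}$ pointwise differentiability of $S$ at $a$ of order $0$ with $0$-jet $c$ is $\lim_{r\to 0^+}r^{-n/p}|S-c|_{0,p;a,r}=0$, and by H\"older duality built into $|\cdot|_{0,p;a,r}$ (see \ref{Lp Modulus for Distribution}) the left-hand side equals
\[
\lim_{r\to 0^+}r^{-n/p}\left(\int_{\mathbf U(a,r)}|g-c|^p\,d\mathscr L^n\right)^{1/p},
\]
exactly the $p$-th order Lebesgue-point condition for $g$ at $a$ with value $c$. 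Applying this with $g=\D^\xi f$ for each $\xi\in\Xi(n,i)$, the hypothesis yields that every $\D^\xi T$ with $|\xi|=i$ is $\nu_{0,q}$ pointwise differentiable of order $0$ at $a$ with $0$-jet $\D^\xi f(a)$.

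Next I would apply Theorem \ref{ThmB}(1) iteratively, with the indices $(i,p)$ of that theorem replaced by $(0,q)$: varying $m\in\{1,\dots,i\}$ with $k=0$ and $l=0$ shows that every $\D^\beta T$ with $|\beta|\leq i$ is $\nu_{0,q}$ pointwise differentiable of order $i-|\beta|$ at $a$; taking $|\beta|=0$ gives the claimed differentiability of $T$ at order $i$. By \ref{comparison of pointwise differentiability:1} each of these jets is uniquely determined (\ref{IntroDef:Pt}(2)), and for $|\beta|<i$ one may \emph{define} $\D^\beta f(a)$ to be the constant term of the $(i-|\beta|)$-jet of $\D^\beta T$.

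Finally I would identify the $i$-jet of $T$ with $P_0(x)=\sum_{j=0}^{i}\langle(x-a)^j/j!,\D^jf(a)\rangle$. The main obstacle is this last bookkeeping: one must verify that the uniquely determined intermediate jets fit together, i.e.\ that $\D_k$ applied to the $i$-jet of $T$ agrees with the $(i-1)$-jet of $\D_k T$. This compatibility is already implicit in the proof of Theorem \ref{ThmB}, which builds the higher-order jet of $T$ by integrating those of its first-order partials via the deformation estimate \ref{EFF}; uniqueness of jets then forces the degree-$j$ homogeneous part of the $i$-jet of $T$ to match the contribution from $\D^\beta f(a)$, $\beta\in\Xi(n,j)$, yielding $P_0$.
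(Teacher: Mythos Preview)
Your proposal is correct and follows essentially the same route as the paper: reinterpret the $p$-th order Lebesgue-point hypothesis via \ref{Modulus formulation for Lp pointwise differentiability} as $\nu_{0,q}$ pointwise differentiability of order $0$ for each $\D^\xi T$ with $|\xi|=i$, then invoke Theorem~\ref{ThmB} (which is \ref{Cor-Poincare}) to lift this to order~$i$ differentiability of $T$. The only difference is in the final identification of the $i$-jet: the paper dispatches this in one line by citing \cite[2.12]{2021}, whereas you argue it directly from the compatibility of jets built into the proof of \ref{ThmB} and uniqueness; both are valid, and your version is self-contained while the paper's is shorter.
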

\begin{proof}
	Assume $U = \mathbf R^n$ and  $T = f \in W_{i,p}(\mathbf R^n)$, $\alpha \in \Xi(n,i)$, $0<r<+\infty$, and notice that the number $|\D^\alpha f - \D^\alpha f(a)|_{p;a,r}$ equals
	$
			(\mathscr L^n\weight \mathbf B(a,r))_{(p)}(f-f(a))
		$.
	By \ref{Modulus formulation for Lp pointwise differentiability} and \ref{def:Lebesgue point},
	 the distribution $\D^\alpha f$ is $\nu_{0,q}$ pointwise differentiable of order $0$ at $a$.
	By \ref{Cor-Poincare}, the distribution $f$ is $\nu_{0,q}$ pointwise differentiable of order $i$ and the conclusion follows 
	from \citep[2.12]{2021}.
\end{proof}

\section{The graph of a pointwise differential}\label{Grph_Pt}
In this section, we prove Theorem \ref{introThmA} (see \ref{Borel-function-of-ptD}). The key ingredients are a continuity criterion \ref{lower-semicontinuous-lemma} and a related lower semi-continuity criterion \ref{lower-semicontinuous-lemma}. In \ref{E_c-strictly-stronger-then-E_s}, we observe that the topology of uniform convergence and weak topology are different and that the functions in \ref{lower-semicontinuous-lemma} are not in general continuous.

This section uses some specialized notation from topology, the theory of locally convex spaces and uniform spaces. For example, $E_B'$ denotes the dual space of a locally convex space $E$ equipped with uniform convergence on every member of $B$, in particular $\mathscr D'(\mathbf R^n,Y)_b$ denotes the space of distribution equipped with topology of uniform convergence on every bounded subset of $\mathscr D(\mathbf R^n,Y)$,
 $S^\circ$ denotes the polar of $S$. Full definitions and basis properties are provided in \ref{loc} and \ref{uni}.

\begin{lemma}\label{equi-criteria-special-case}
	Suppose $X$ is a locally compact Hausdorff space, $E$ is a locally convex space,  $\mathbf B$ is a family of bounded subsets of $E$, and $u $ is a function mapping $X\times E_{\mathbf B}'$ into $\mathbf R$ with the following two properties.
	\begin{enumerate}
		\item For every $b \in E_{\mathbf B}'$ and $x\in X$, $u(\cdot , b)$ is continuous and $u(x,\cdot) \in E'^\ast$.
		\item For every compact subset $K$ of $X$ there exists $S\in \mathbf B$ such that
		\[ \sup \{ u(x, T ) : T \in S^\circ , x\in K\} < +\infty. \]
	\end{enumerate}
	Then $u$ is continuous.
\end{lemma}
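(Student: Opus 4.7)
The plan is to obtain joint continuity at an arbitrary point $(x_0,T_0) \in X \times E'_{\mathbf B}$ by splitting the increment
\[ u(x,T) - u(x_0,T_0) = \bigl(u(x,T) - u(x,T_0)\bigr) + \bigl(u(x,T_0) - u(x_0,T_0)\bigr), \]
and controlling the two summands separately using the two hypotheses. The second summand is handled directly by hypothesis~(1): since $u(\cdot,T_0)$ is continuous at $x_0$, there is a neighborhood $U_1$ of $x_0$ on which $|u(x,T_0) - u(x_0,T_0)| < \varepsilon/2$.

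For the first summand I would invoke hypothesis~(2) together with the linearity built into $u(x,\cdot) \in E'^\ast$. By local compactness of $X$, pick a compact neighborhood $K$ of $x_0$. Apply~(2) to $K$ to obtain $S \in \mathbf B$ and a finite $M$ with
\[ \sup\{ u(x,T') : x\in K,\ T' \in S^\circ \} \leq M. \]
Since $u(x,\cdot)$ is linear, by homogeneity applied to $T - T_0 \in \lambda S^\circ$ for $\lambda>0$, one obtains $|u(x,T-T_0)| \leq \lambda M$ for every $x\in K$. Taking $\lambda = \varepsilon/(2M)$, the set $V := T_0 + \tfrac{\varepsilon}{2M} S^\circ$ is a neighborhood of $T_0$ in $E'_{\mathbf B}$ (because $S^\circ$ is a $0$-neighborhood in the topology of uniform convergence on $\mathbf B$-sets, $S$ being in $\mathbf B$), and for $(x,T) \in K \times V$ we have $|u(x,T) - u(x,T_0)| < \varepsilon/2$.

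Combining, on the neighborhood $(U_1 \cap \operatorname{Int} K) \times V$ of $(x_0,T_0)$ the bound $|u(x,T)-u(x_0,T_0)|<\varepsilon$ holds, which is joint continuity at $(x_0,T_0)$. The main obstacle — really the whole content of the argument — is converting the pointwise bound in~(2) into an equicontinuity statement for the family $\{u(x,\cdot):x\in K\}$ of linear functionals, which is precisely what linearity together with the fact that $S^\circ$ is a $0$-neighborhood in $E'_{\mathbf B}$ delivers. Local compactness is used once, to produce the compact set $K$ on which~(2) can be applied.
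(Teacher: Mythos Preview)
Your proof is correct and follows essentially the same approach as the paper's: both convert hypothesis~(2) into equicontinuity of the family $\{u(x,\cdot):x\in K\}$ on $E'_{\mathbf B}$ (via linearity and the fact that $S^\circ$ is a $0$-neighborhood), and then combine this with the separate continuity in~(1). The paper packages these two steps into citations of an abstract equicontinuity remark and a general ``continuity of fibration'' theorem for maps into uniform spaces, whereas you carry out the $\varepsilon$-splitting directly; the content is the same.

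One small technical point: you pass from $T-T_0\in\lambda S^\circ$ to $|u(x,T-T_0)|\leq\lambda M$, but the polar $S^\circ$ as defined in the paper need not be balanced, so hypothesis~(2) only gives the upper bound $u(x,T')\leq M$ on $S^\circ$, not the two-sided bound. This is harmless---just replace $S^\circ$ by the smaller balanced neighborhood $\Gamma(S)^\circ\subset S^\circ$ (or intersect $S^\circ$ with $-S^\circ$), on which the one-sided bound immediately yields the two-sided one---but it is worth making explicit.
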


\begin{proof}
	Since $X$ is locally compact there exists a family $R$ of compact subsets of $X$ such that the family $\{ \Int S : S\in R\} $ covers $X$. We claim that for every compact subset $K$ of $X$,
	\[ \{u(x,\cdot ) : x\in K\} \cap E'^\ast\]
	is equicontinuous with respect to the space $E'$ equipped with the topology of uniform convergence on $\mathbf B$.
	
	To verify the claim, note that for every $S\in \mathbf B$ the polar (see \ref{Duality}) $S^{\circ}$ of $S$ 
	is a neighborhood of $0$ with respect to the topology of uniform convergence on $\mathbf B$ on $E'$.
	By hypothesis, for each $K\in R$ there exists $S\in \mathbf B$ such that
	\[ \sup \{ u(x, T ) : T \in S^\circ , x\in K\} < +\infty. \]
	By \ref{equi}, the set of $\{ u(x, \cdot  ) :  x\in K\}$  of functions is equicontinuous. The conclusion of the lemma follows from applying \ref{continuity of fibration} with $X,Y,Z$  replace by $ \mathbf R,E_\mathbf{B}',\mathbf R$.
\end{proof}

\begin{remark}\label{equi-prod-continuity}
	Suppose $X$ is a locally compact Hausdorff space, $E$ is a locally convex space, and $u : X \times E'_{\mathbf B} \to \mathbf R$ is continuous. Then for every compact subset $K$ of $X$ we have
	\[ \sup \{ u(x, T ) : T \in S^\circ , x\in K\} < +\infty. \]
	This follows from the definition of $E'_\mathbf{B}$ (see  \ref{Duality}).
\end{remark}

\begin{lemma}\label{lower-semicontinuous-lemma}
	Suppose $Y$ is a Banach space. Then for every $\theta \in \mathscr D(\mathbf R^n,Y)$, the function $f_\theta$ mapping $(a,T) \in \mathbf R^n \times \mathscr D'(\mathbf R^n,Y)_b$ onto $T_x(\theta(x-a))$  is continuous. In particular, for every $S\subset \mathscr D(\mathbf R^n,Y)$ the map 
	 \begin{gather*}
	 	(a,T) \mapsto \sup \{ t^\kappa \cdot T_x(\theta(t^{-1}(x-a))) : 0< t \leq r , \theta \in S\}\\
	 	 \text{for $(a,T)\in \mathbf R^n\times \mathscr D'(\mathbf R^n,Y)$}
	 \end{gather*}
	is lower semicontinuous on $\mathbf R^n \times \mathscr D'(\mathbf R^n,Y)_b$.
\end{lemma}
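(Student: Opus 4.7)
The aim is a direct application of \ref{equi-criteria-special-case} with $X=\mathbf R^n$ (which is locally compact Hausdorff), $E=\mathscr D(\mathbf R^n,Y)$, $\mathbf B$ the family of bounded subsets of $E$, so that $E'_{\mathbf B}=\mathscr D'(\mathbf R^n,Y)_b$, and with $u=f_\theta$. The second assertion will then be deduced from the first by writing the supremum as a supremum of an explicit family of continuous functions, noting that any supremum of continuous real valued functions is lower semicontinuous.

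\textbf{Verifying the hypotheses of \ref{equi-criteria-special-case}.} For fixed $T\in\mathscr D'(\mathbf R^n,Y)$, the translate $\tau_a\theta(x)=\theta(x-a)$ depends continuously on $a\in\mathbf R^n$ with respect to the Fr\'echet topology of $\mathscr D_K(\mathbf R^n,Y)$, where $K$ is any fixed compact neighborhood of $\spt\theta$ containing all translates under consideration; hence $a\mapsto T_x(\theta(x-a))=T(\tau_a\theta)$ is continuous. For fixed $a$, the assignment $T\mapsto T(\tau_a\theta)$ is nothing but evaluation at an element of $E$, so it is a linear functional on $E'$ and thus belongs to $E'^{\ast}$. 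The remaining hypothesis is condition (2): given a compact set $K\subset\mathbf R^n$, set
\[
S=\{\tau_a\theta \with a\in K\}.
\]
Every element of $S$ is supported in the compact set $\spt\theta+K$ (translate of a compact set), and for each nonnegative integer $i$ one has $\boldsymbol{\nu}^i_{\spt\theta+K}(\tau_a\theta)=\boldsymbol{\nu}^i_{\spt\theta}(\theta)$, so $S$ is a bounded subset of $\mathscr D(\mathbf R^n,Y)$, hence $S\in\mathbf B$. By definition of the polar, $T\in S^\circ$ implies $|T(\tau_a\theta)|\leq 1$ for every $a\in K$, so
\[
\sup\{f_\theta(a,T) \with T\in S^\circ,\ a\in K\}\leq 1<+\infty.
\]
Invoking \ref{equi-criteria-special-case} yields the continuity of $f_\theta$ on $\mathbf R^n\times\mathscr D'(\mathbf R^n,Y)_b$.

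\textbf{Lower semicontinuity of the supremum.} Fix $0<t\leq r$ and $\theta\in S$, and let $\theta_t\in\mathscr D(\mathbf R^n,Y)$ be defined by $\theta_t(y)=\theta(t^{-1}y)$. Then
\[
t^\kappa\cdot T_x(\theta(t^{-1}(x-a)))=t^\kappa\cdot T_x(\theta_t(x-a))=t^\kappa f_{\theta_t}(a,T),
\]
which, by the first part of the lemma applied to $\theta_t$, is a continuous function of $(a,T)\in\mathbf R^n\times\mathscr D'(\mathbf R^n,Y)_b$. The map in question is the supremum, over the index set $\{(t,\theta)\with 0<t\leq r,\ \theta\in S\}$, of this family of continuous real valued functions, and is therefore lower semicontinuous.

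\textbf{Anticipated difficulty.} The only non-formal step is checking that $\{\tau_a\theta\with a\in K\}$ is bounded in $\mathscr D(\mathbf R^n,Y)$ and serves as the witness required by condition (2) of \ref{equi-criteria-special-case}; once that is in place, continuity in $(a,T)$ reduces to the standard continuity of translation in $\mathscr D_K(\mathbf R^n,Y)$, and the equicontinuity content is entirely absorbed into \ref{equi-criteria-special-case}.
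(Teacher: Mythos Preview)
Your approach is the same as the paper's---both reduce to \ref{equi-criteria-special-case} with $E=\mathscr D(\mathbf R^n,Y)$ and $\mathbf B$ the bounded sets---and your verification of condition~(2) is in fact more direct: you exhibit the witness $S=\{\tau_a\theta:a\in K\}$ explicitly, whereas the paper takes a detour through \ref{equi} and \ref{family-of-Hyperplane-bounded-over-nbd} to arrive at the same conclusion.

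One small correction is needed. With the paper's convention (\ref{Duality}), the polar $S^\circ$ is the set of $T$ with $T(s)\geq -1$ for all $s\in S$; this gives a \emph{lower} bound on $T(\tau_a\theta)$, not the upper bound (or absolute bound) you claim. To satisfy condition~(2) of \ref{equi-criteria-special-case}, which asks for $\sup\{u(x,T):T\in S^\circ,\ x\in K\}<+\infty$, replace your $S$ by $-S$ (still bounded), so that $T\in(-S)^\circ$ yields $T(\tau_a\theta)\leq 1$; or use the balanced hull $\Gamma(S)\in\mathbf B$, whose polar is the absolute polar and gives $|T(\tau_a\theta)|\leq 1$. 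With this adjustment the argument goes through, and your deduction of the lower semicontinuity of the supremum is correct as written.
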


\begin{proof} 
	
	For every $a\in \mathbf R^n$, the function 
	$T \mapsto T_x(\theta(x-a))$ of $\mathscr D'(\mathbf R^n,Y)$
	 is continuous {(see \ref{Def:Stong dual and bidual})}. For every compact subset $K$, let $C = K+\spt \theta$. Notice $C$ is compact and the set
	$S = \{  \theta(\cdot -a) : a \in C \} $ is bounded in $\mathscr D(\mathbf R^n,Y)$. By \ref{equi} with $\mathbf B$ replaced by the class of bounded subset of $\mathscr D(\mathbf R^n,Y)$ 
	we conclude that 
	$ \langle \theta(x-a),(\cdot)_x \rangle : a \in C\}$   
	 is an equicontinuous subset of $\mathscr D'(\mathbf R^n,Y)_b$.
	 We apply \ref{family-of-Hyperplane-bounded-over-nbd} with $E$, $F$, and $U$ replaced by $\mathscr D'(\mathbf R^n,Y)_b$, $\mathbf R$, and $\mathbf R \cap \{ t :|t| \leq 1\}$ to obtain a bounded subset $S$ of $\mathscr D(\mathbf R^n,Y)$ such that
	\[ \sup \{ \langle \theta(x-a),T_x \rangle : a\in K, T \in S^\circ  \} <+\infty. \]
	The conclusion follows from  \ref{equi-criteria-special-case} with $u,E$, and $\mathbf B$ replaced by $f,\mathscr D(\mathbf R^n,Y)$, and the class of bounded subset of $\mathscr D(\mathbf R^n,Y)$.
\end{proof}
The necessity of passing to the topology of uniform convergence on bounded subsets on $\mathscr D'(\mathbf R^n,Y)$ is shown by the following example.
\begin{example}\label{E_c-strictly-stronger-then-E_s}
	Suppose $ n \in \mathbf N$ is a positive integer and  $Y$ is a Banach space with $\dim Y > 0$. Then the topology of $\mathscr D'(\mathbf R^n,Y)$  induced by uniform convergence on compact subsets of $\mathscr D(\mathbf R^n,Y)$ is strictly finer than the weak topology of $\mathscr D'(\mathbf R^n,Y)$ induced by $\mathscr D(\mathbf R^n,Y)$. 
	
	If they were the same topology, then \ref{compar-topology} every compact subset of $\mathscr D(\mathbf R^n,Y)$ would be contained in a convex envelope of some nonempty finite subset of $\mathscr D(\mathbf R^n,Y)$. That is, every compact set would have finite affine dimension. Towards a contradiction , let 
	\begin{gather*}
		\Phi \in \mathscr D_{\mathbf B(0,1)}(\mathbf R^n),\quad 0\neq v \in \mathbf R^n, \quad 0\neq \xi \in Y,\quad  \text{and} \\
		\Phi(x)\neq 0 \quad \text{for every $x\in \mathbf U(0,1)$}.
	\end{gather*}We may construct a sequence of functions $g_1,g_2,\dots,\in\mathscr D(\mathbf R^n,Y)$ by
	\[ g_i(x) = \Phi(x-a_i)\xi \quad \text{ for  $x \in \mathbf R^n$  },  \]
	where $a_i=i^{-1}v$ and $i \in \mathbf N$, which forms a linearly independent compact subset of  $\mathscr D(\mathbf R^n,Y)$. In fact, the $g_i$ is nonzero every where on $\mathbf{U}(a_i,1)$ for every $i \in \mathbf N$, and if $J \subset \mathbf N$  is finite and $j\in J$, there exist a point $x\in \mathbf{U}(a_j,1) \sim \bigcup\{ \mathbf{B}(a_i,1) : i\in J\sim \{ j\}\}$ such that $g_j(x)\neq 0$. 
	
	The function $f_\theta$ defined in the proof of \ref{lower-semicontinuous-lemma} with $\theta$ replaced by $\Phi\cdot \xi$ is not continuous with respect to $\sigma(\mathscr D'(\mathbf R^n,Y),\mathscr D(\mathbf R^n,Y))$ (see \ref{Def:Weak topology}). In fact, if $f_{\Phi\cdot \xi}$ were continuous with respect to $\sigma(\mathscr D'(\mathbf R^n,Y),\mathscr D(\mathbf R^n,Y))$, then we apply \ref{equi-prod-continuity} with $X$ and $\mathbf B$  replaced by $\mathbf R^n$ and the set of finite subsets of $\mathscr D(\mathbf R^n,Y)$ to obtain a finite set $S$ of $\mathscr D(\mathbf R^n,Y)$ such that
	\[ \sup \{ T(g_i) : T \in S^\circ,i \in \mathbf N\} = 1 <+\infty.\]
	We apply \BourbakiTVS{\BourbakiCiteNumber{2}{6}{3}. Theorem 1} with $F$,$G$, $\sigma(F,G)$ and $M$ replaced by $\mathscr D(\mathbf R^n,Y)$, $\mathscr D'(\mathbf R^n,Y)$, $\sigma(\mathscr D(\mathbf R^n,Y),\mathscr D'(\mathbf R^n,Y))$ and $S$, to conclude that the sequence is contained in the closed convex envelope of the finite set $S \cup \{0\}$, which contradicts to the second paragraph.
\end{example}

\Copy{ThmA}{
\begin{theorem}\label{Borel-function-of-ptD}\ref{introThmA}
	Suppose $n$ is a positive integer, $Y$ is a Banach space, $\kappa\in \mathbf R$, and $\nu$ is a continuous seminorm on $\mathscr D_{\mathbf B(0,1)}(\mathbf R^n,Y)$. Let $Y_k$ be vector space of polynomial functions of degree at most $k$ with values in $Y'$(here we use the convention that for every integer $k< 0$, the vector space $ Y_k$ is zero). Then 
	\begin{enumerate}[ref=\thetheorem(\theenumi)]
		\item \label{Borel-function-of-ptD:enum:1}  The set $E$ of  $(a,T,P) \in \mathbf R^n\times  \mathscr E'(\mathbf R^n,Y) \times Y_k$ satisfying 
		\[\limsup_{r\to 0^+}r^{\kappa}\sup \{ (T-P)^{a,r}(\theta) : \NormBoundedExpr{\nu}{\theta}{\mathscr D_{\mathbf B(0,1)}(\mathbf R^n,Y)} \} < + \infty\] 
		is a countable union of closed subsets of $\mathbf R^n\times  \mathscr E'(\mathbf R^n,Y)_b \times Y_k$. Moreover, if $Y$ is finite dimensional, then the conclusion can be improved to be the set $E$ is a countable union of compact subsets of $\mathbf R^n\times  \mathscr E'(\mathbf R^n,Y)_b \times Y_k$. 
		\item \label{Borel-function-of-ptD:enum:2} The set $E_0$ of  $(a,T,P) \in \mathbf R^n\times  \mathscr E'(\mathbf R^n,Y) \times Y_k$ satisfying 
		\[\lim\limits_{r\to 0^+}r^{\kappa}\sup \{ (T-P)^{a,r}(\theta) : \NormBoundedExpr{\nu}{\theta}{\mathscr D_{\mathbf B(0,1)}(\mathbf R^n,Y)} \} =0\] 
		is a Borel subset of $\mathbf R^n\times  \mathscr E'(\mathbf R^n,Y)_b \times Y_k$.
		\item \label{Borel-function-of-ptD:enum:3} The set $D$ of  $(a,T,P) \in \mathbf R^n\times  \mathscr D'(\mathbf R^n,Y) \times Y_k$ satisfying 
		\[\limsup_{r\to 0^+}r^{\kappa}\sup \{ (T-P)^{a,r}(\theta) : \NormBoundedExpr{\nu}{\theta}{\mathscr D_{\mathbf B(0,1)}(\mathbf R^n,Y)} \} < + \infty\] 
		is a countable union of closed subsets of $\mathbf R^n\times  \mathscr D'(\mathbf R^n,Y)_b \times Y_k$. 
		\item \label{Borel-function-of-ptD:enum:4} The set $D_0$ of  $(a,T,P) \in \mathbf R^n\times  \mathscr D'(\mathbf R^n,Y) \times Y_k$ satisfying 
		\[\lim\limits_{r\to 0^+}r^{\kappa}\sup \{ (T-P)^{a,r}(\theta) : \NormBoundedExpr{\nu}{\theta}{\mathscr D_{\mathbf B(0,1)}(\mathbf R^n,Y)} \} =0\] 
		is a Borel subset of $\mathbf R^n\times  \mathscr D'(\mathbf R^n,Y)_b \times Y_k$.
	\end{enumerate}
\end{theorem}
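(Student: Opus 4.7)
My plan is to reduce all four conclusions to the lower semicontinuity of the family of functions
\[
g_\delta(a, T, P) := \sup\{r^{\kappa}(T-P)^{a,r}(\theta) : 0 < r \leq \delta,\ \theta \in \mathscr D_{\mathbf B(0,1)}(\mathbf R^n, Y),\ \nu(\theta) \leq 1\},
\]
for $\delta > 0$, viewed as $[0, +\infty]$-valued functions on the ambient space $\mathbf R^n \times \mathscr D'(\mathbf R^n, Y)_b \times Y_k$.

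First, I would establish that each $g_\delta$ is lower semicontinuous. For fixed $r > 0$ and $\theta$, continuity of $(a, T) \mapsto T_x(\theta(r^{-1}(x-a)))$ follows directly from Lemma \ref{lower-semicontinuous-lemma} applied to the rescaled test function $\theta(r^{-1}\cdot)$. For the polynomial part, the change of variables $y = r^{-1}(x-a)$ recasts $P^{a,r}(\theta)$ as $\int \langle \theta(y), P(a + ry)\rangle \, d\mathscr L^n_y$, which is polynomial in the coordinates of $a$ and linear in the coefficients of $P$, hence continuous in $(a, P)$. Thus $(a, T, P) \mapsto r^{\kappa}(T-P)^{a,r}(\theta)$ is continuous for each $r$ and $\theta$, and $g_\delta$ is a supremum of such functions, hence lower semicontinuous.

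Given this, I would derive (3) by noting that $\limsup_{r \to 0^+}(\ldots) < +\infty$ is equivalent to the existence of $N, m \in \mathbf N$ with $g_{1/m}(a,T,P) \leq N$, so $D = \bigcup_{N, m \in \mathbf N} \{g_{1/m} \leq N\}$ is a countable union of closed sets. For (4), since $g_\delta \geq 0$ (the admissible test $\theta = 0$ contributes zero), $\lim_{r\to 0^+}(\ldots) = 0$ is equivalent to the condition $\forall k,\, \exists m,\ g_{1/m} \leq 1/k$, yielding $D_0 = \bigcap_{k \in \mathbf N} \bigcup_{m \in \mathbf N} \{g_{1/m} \leq 1/k\}$, which is Borel. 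Conclusions (1) and (2) follow by intersecting with the subspace $\mathbf R^n \times \mathscr E'(\mathbf R^n, Y) \times Y_k$; since the continuous inclusion $\mathscr D \hookrightarrow \mathscr E$ carries bounded sets to bounded sets, $\mathscr E'(\mathbf R^n, Y)_b$ carries a topology finer than the one induced by $\mathscr D'(\mathbf R^n, Y)_b$, so the $F_\sigma$ and Borel structures are preserved under restriction.

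The remaining improvement in (1) for finite-dimensional $Y$ is where I expect the main obstacle to lie. The claim I would need to establish is that $\mathscr E'(\mathbf R^n, Y)_b$ is $\sigma$-compact. One argues that $\mathscr E(\mathbf R^n, Y)$ is a nuclear Fréchet-Montel space, so its strong dual is a (DF)-Montel space, and a fundamental sequence of bounded sets in a (DF)-space must cover the whole space (every singleton is bounded), with each member having compact closure in the Montel case. The product $\mathbf R^n \times \mathscr E'(\mathbf R^n, Y)_b \times Y_k$ is then $\sigma$-compact Hausdorff, and in such a space every closed subset is $\sigma$-compact, which upgrades the conclusion of (1) from a countable union of closed sets to a countable union of compact sets. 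The delicate points are (i) executing the joint continuity verification for the polynomial evaluation $P^{a,r}(\theta)$ carefully, and (ii) marshalling the correct locally convex duality facts to confirm the $(DF)$-Montel property and the resulting $\sigma$-compactness.
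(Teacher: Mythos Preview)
Your argument is correct, and it differs from the paper's proof mainly in the direction of the reduction between $\mathscr D'$ and $\mathscr E'$ and in the handling of the compactness upgrade.

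The paper proves (1) and (2) first, working on $\mathbf R^n \times \mathscr E'(\mathbf R^n,Y)_b \times Y_k$, and then obtains (3) and (4) by constructing cutoff functions $\zeta_i$ and continuous maps $\chi_i : \mathscr D'(\mathbf R^n,Y)_b \to \mathscr E'(\mathbf R^n,Y)_b$, $\chi_i(T)(\theta)=T(\zeta_i\theta)$, so that membership of $(a,T,P)$ in $D$ (resp.\ $D_0$) with $a\in\mathbf B(0,i)$ is equivalent to membership of $(a,\chi_i(T),P)$ in $E$ (resp.\ $E_0$). You instead go the other way: you establish lower semicontinuity of $g_\delta$ directly on $\mathbf R^n \times \mathscr D'(\mathbf R^n,Y)_b \times Y_k$, read off (3) and (4), and then pull back along the continuous embedding $\mathscr E'(\mathbf R^n,Y)_b \hookrightarrow \mathscr D'(\mathbf R^n,Y)_b$ to get (1) and (2). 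Your route is shorter and avoids the cutoff construction entirely; the paper's route has the minor advantage that the explicit exhaustion of $\mathscr E'$ by the sets $A(i,j)=\{T:|T(\theta)|\le j\,\boldsymbol\nu^i_{\mathbf B(0,i)}(\theta)\}$ is already in place for the compactness argument.

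For the compactness upgrade in (1), the paper shows directly that each $A(i,j)$ is compact in $\mathscr E'(\mathbf R^n,Y)_b$ via \ref{Bounded subset of strong topology of space of distribution} and intersects with closed balls in $\mathbf R^n$ and $Y_k$; this exhibits $E$ as a countable union of compact sets without appealing to (DF)-space theory. Your abstract argument that $\mathscr E'(\mathbf R^n,Y)_b$ is $\sigma$-compact (as the strong dual of a Fr\'echet--Montel space, it is a Montel (DF)-space covered by the polars of a countable neighborhood base) is correct and yields the same conclusion; it trades concreteness for generality. Both points you flag as delicate, the joint continuity of $(a,P)\mapsto P^{a,r}(\theta)$ and the (DF)-Montel structure of $\mathscr E'_b$, are genuine but routine, and the paper's tools (\ref{Bounded subset of strong topology of space of distribution} together with the fact that $\mathscr E(\mathbf R^n,Y)$ is Fr\'echet) already suffice to verify the latter.
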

}
\begin{proof}
	We will first prove \ref{Borel-function-of-ptD:enum:1} and \ref{Borel-function-of-ptD:enum:2}, which will imply \ref{Borel-function-of-ptD:enum:3} and \ref{Borel-function-of-ptD:enum:4}, respectively.
 
	Choose a norm $q$ to define the unique topology on $Y_k$ which is compatible with its vector space structure.
	For every positive integer $i,j$ and $0<\epsilon<+\infty$ we define
	\begin{equation}
		\begin{aligned}
			 A(i,j) &= \mathscr E'(\mathbf R^n,Y) \cap \{ T: 
			 	     |T(\theta)| \leq j \boldsymbol{\nu}_{\mathbf B(0,i)}^i(\theta) \quad \text{for every $\theta \in \mathscr E(\mathbf R^n,Y)$}.\}, \\
			B(i) & = Y_k \cap \{ P : q(P) \leq i\}, \qquad C(i,j)= \mathbf B(0,i) \times A(i,j)\times B_i,
		\end{aligned}
	\end{equation}
and $E(i,\epsilon)$ to be the set of $(a,T,P) \in \mathbf R^n\times  \mathscr E'(\mathbf R^n,Y) \times Y_k$ such that
\[ |(T-P)^{a,r}_x(\theta)| \leq \nu(\theta) r^{n+\kappa} \epsilon\]
for every $\theta \in \mathscr D_{\mathbf B(0,1)}( \mathbf R^n,Y)$ and $0<r \leq i^{-1}$. Observe that
 $A(i,j)$ is a closed subset of $\mathbf R^n\times  \mathscr E'(\mathbf R^n,Y)_b \times Y_k$ and
\begin{equation}\label{Classifier for the limit condition of differentiability of distribution}
	E = \bigcup_{i,j = 1}^\infty  C(i,j) \cap E(i,i), \qquad
		E_0 = \bigcap_{l=1}^\infty\bigcup_{i,j=1}^\infty  C(i,j)\cap E(i,l^{-1}).
\end{equation} 
Then by \ref{lower-semicontinuous-lemma} and \ref{Continuity of embedding the space of distribution}, 
the set $E(i,\epsilon)$ is closed in $\mathbf R^n\times  \mathscr E'(\mathbf R^n,Y)_b \times Y_k$. 
We conclude from \eqref{Classifier for the limit condition of differentiability of distribution} that 
$E$ is a countable union of closed subsets of $\mathbf R^n\times  \mathscr E'(\mathbf R^n,Y)_b \times Y_k$. 
If in additional $Y$ is finite dimensional then by \ref{Bounded subset of strong topology of space of distribution} 
$A(i,j)$ is a compact subset of $\mathscr E'(\mathbf R^n,Y)_b$. This proves \ref{Borel-function-of-ptD:enum:1} and \ref{Borel-function-of-ptD:enum:2}.

To prove \ref{Borel-function-of-ptD:enum:3} and \ref{Borel-function-of-ptD:enum:4}, 
for each positive integer $i$, we construct $\zeta_i \in \mathscr D(\mathbf R^n)$ with the following two properties
\begin{gather*}
	\zeta_i(x) = 1 \quad \text{for every $x\in \mathbf B(0,2 i)$},\\
	\spt \zeta_i \subset \mathbf B(0,3i).
\end{gather*}
By \ref{Continuity of embedding the space of distribution},
 the function $\chi_i$ characterized by
 \begin{gather*}
	\chi_i : \mathscr D'(\mathbf R^n, Y)_b \to \mathscr E'(\mathbf R^n,Y)_b,\\
	\chi_i(T)(\theta) =  T(\zeta_i \theta) \quad \text{for every $\theta \in \mathscr E(\mathbf R^n,Y)$}
 \end{gather*}
is continuous. Observe that 
\[
\begin{aligned}
	D = \bigcup_{i=1}^\infty (\mathbf B(0,i)\times  \mathscr D'(\mathbf R^n,Y)_b \times Y_k ) \cap (1_{\mathbf R^n}\times\chi_i \times 1_{Y_k})^{-1} [E],\\
	 D_0 = \bigcup_{i=1}^\infty (\mathbf B(0,i)\times  \mathscr D'(\mathbf R^n,Y)_b \times Y_k ) \cap  (1_{\mathbf R^n}\times\chi_i \times 1_{Y_k})^{-1} [E_0].
\end{aligned} \]
The conclusions of \ref{Borel-function-of-ptD:enum:3} 
and \ref{Borel-function-of-ptD:enum:4} 
follow from \ref{Borel-function-of-ptD:enum:1} and \ref{Borel-function-of-ptD:enum:2}, respectively.
\end{proof}
We will use the following remark with \ref{Bounded subset of strong topology of space of distribution} in the next corollary to see that the Borel sets $\mathbf R^n\times \mathscr D'(\mathbf R^n)_s$ and $\mathbf R^n\times \mathscr D'(\mathbf R^n)_b$ agree.
\begin{remark}\label{rmk:Suslin projection}
	Suppose $F$ is a Lusin Hausdorff locally convex space and $\tau$ is the locally convex topology of $F$ (see \ref{Def:TVS:LC} for definition). For every topology $\tau'$ let $B_{\tau'}$ be the family of Borel sets with respect to $\tau'$. 
	Then $B_{\tau'} = B_\tau$ whenever $\tau'$ is a topology finer than a Hausdorff topology on $F$ and coarser than $\tau$. The conclusion follows directly from \ref{Suslin-projection}\citep[see][2.2.10]{MR41:1976}.
\end{remark}
\begin{corollary}\label{ThmA}\ref{cor:Borel reg}
	Suppose $Y$ is a Banach space, $k$ is an integer, $0< \alpha \leq 1$, $k+\alpha \geq 0$, $\nu$ is a continuous seminorm on $\mathscr D_{\mathbf B(0,1)}(\mathbf R^n,Y)$, and $T\in \mathscr D'(\mathbf R^n,Y)$.
	Let $Y_k$ be vector space of polynomial function of degree at most $k$ with values in $Y'$. Then the following two statements hold
	\begin{enumerate}[ref=\thetheorem(\theenumi)]
		\item \label{ThmA-1}The set $D'$ of all $(a,P)\in  \mathbf R^n\times Y_k$ such that $T$ is $\nu$ pointwise differentiable of order $(k,\alpha)$ with $k$ jet $P$ at $a$ is a countable union of compact subsets of $\mathbf R^n\times Y_k$.
		\item \label{ThmA-2}If $k\geq 0$, then the set $D'_0$ of all $(a,P)\in  \mathbf R^n\times Y_k$ such that $T$ is $\nu$ pointwise differentiable of order $k$ with $k$ jet $P$ at $a$ is a Borel subset of $\mathbf R^n$.
		\item \label{ThmA-3} The set of $a \in \mathbf R^n$ such that $T$ is $\nu$ pointwise differentiable of order $(k,\alpha)$ at $a$ is a countable union of compact subsets of $\mathbf R^n$.
		\item \label{ThmA-4}If $k\geq 0$ and $Y$ is separable, then $D'_0$ is a Borel function. Furthermore, $\dmn D'_0$ is a Borel set.
		\item \label{ThmA-5}If $k\geq 0$ and $Y$ is finite dimensional,
		 then for every $j\in \{0,1,\dots,k\}$ 
		 the set of all 
		 $(a,T,P) \in \mathbf R^n \times \mathscr D'(\mathbf R^n,Y) \times Y_k$ 
		 such that $T$ is $\nu$ pointwise differentiable of order $k$ at $a$
		 with $k$ get $P$
		 is a Borel set and the function mapping 
		 $(a,T) \in X$ onto $\pt \D^j T(a) = \D^j P(a)$ is a Borel function on $\mathbf R^n \times \mathscr D'(\mathbf R^n,Y)_s$ whose domain is a Borel set.
	\end{enumerate}
\end{corollary}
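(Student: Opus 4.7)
The plan is to derive each part by pulling back Theorem \ref{Borel-function-of-ptD} along the continuous affine injection
\[ \iota_T : \mathbf R^n \times Y_k \longrightarrow \mathbf R^n \times \mathscr D'(\mathbf R^n,Y)_b \times Y_k, \qquad \iota_T(a,P)=(a,T,P), \]
and by exploiting uniqueness of the $k$-jet in both the $(k,\alpha)$ and the order-$k$ settings. Uniqueness I would verify directly from Definition \ref{IntroDef:Pt} by expanding the difference $(P_1-P_2)^{a,r}(\theta)$ as a polynomial in $r$ centered at $a$, and using $\alpha>0$ (respectively the vanishing-limit condition) to force each Taylor coefficient of $P_1-P_2$ at $a$ to vanish. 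Part \ref{ThmA-2} is then immediate, since $D'_0 = \iota_T^{-1}(D_0)$ with $D_0$ Borel by \ref{Borel-function-of-ptD}(4); likewise $D' = \iota_T^{-1}(D)$ is immediately a countable union of closed subsets of $\mathbf R^n\times Y_k$, which is a weak form of \ref{ThmA-1}.

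To upgrade \ref{ThmA-1} to compactness, I would revisit the decomposition $E = \bigcup_{i,j} C(i,j)\cap E(i,i)$ underlying the proof of \ref{Borel-function-of-ptD}: after slicing by the fixed $T$ the distributional factor collapses, and what remains is the intersection of a bounded polynomial ball in $Y_k$ with the set on which the $(k,\alpha)$-estimate for $(T-P)^{a,r}$ holds uniformly. Uniqueness of the jet makes $D'$ the graph of a partial function $a\mapsto P_a$; on each piece, reading off the coefficients of $P_a$ from the pairings of $T$ with finitely many rescaled test functions should show that the piece lies in a finite-dimensional affine slice of $Y_k$, converting the bounded closed piece into a compact set. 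Part \ref{ThmA-3} then follows by projecting the resulting countable union of compact subsets of $D'$ onto $\mathbf R^n$.

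For \ref{ThmA-4}, separability of $Y$ makes $Y_k$ a separable Banach space, hence Polish. Uniqueness of the $k$-jet makes $D'_0$ a Borel graph between Polish spaces, and the Lusin--Suslin principle (cf.\ Remark \ref{rmk:Suslin projection}) yields Borel measurability of the function together with Borelness of its domain. For \ref{ThmA-5}, the set of triples is Borel with respect to $\mathscr D'(\mathbf R^n,Y)_b$ directly from \ref{Borel-function-of-ptD}(4); to transfer to the weak topology $\mathscr D'(\mathbf R^n,Y)_s$ I would invoke \ref{Bounded subset of strong topology of space of distribution} together with Remark \ref{rmk:Suslin projection} to identify the Borel $\sigma$-algebras in the finite-dimensional-$Y$ setting, and then replay the Borel-graph argument in the enlarged product, composing with the continuous linear evaluation $P\mapsto \D^j P(a)$ to obtain the Borel assertion about $\pt \D^j T(a)$.

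The main obstacle I anticipate is the compactness upgrade in \ref{ThmA-1} when $Y_k$ is infinite dimensional: the ball $B_l\subset Y_k$ is no longer compact, so compactness must come from the rigidity imparted by fixing $T$---specifically from uniqueness of the jet and from the uniform estimate defining each piece---rather than from an abstract bounded-implies-compact statement. A secondary subtlety is the Borel $\sigma$-algebra transfer between the weak and strong dual topologies in \ref{ThmA-5}, which will require careful use of the Suslin-type projection lemma cited in Remark \ref{rmk:Suslin projection}.
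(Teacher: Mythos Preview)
Your overall strategy---pulling back $D$ and $D_0$ from Theorem~\ref{Borel-function-of-ptD} along the slice $\iota_T$, then invoking jet uniqueness and the Suslin projection of \ref{rmk:Suslin projection}---is exactly what the paper does. For \ref{ThmA-2}, \ref{ThmA-4} and \ref{ThmA-5} your outline matches the paper step for step (the paper cites \cite[2.10]{2021} for uniqueness rather than re-deriving it, and combines \ref{Bounded subset of strong topology of space of distribution} with \ref{rmk:Suslin projection} just as you propose).

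The divergence is in \ref{ThmA-1}. The paper's argument is much shorter than yours: it simply asserts that $\mathbf R^n$ and $Y_k$ are locally compact, so that the closed pieces inherited from the decomposition $C(i,j)\cap E(i,i)$, being already bounded in $\mathbf B(0,i)\times B(i)$, are automatically compact; part \ref{ThmA-3} then follows by projection. Your worry about this step is well placed---$Y_k$ contains a copy of $Y'$ and is locally compact only when $Y$ is finite dimensional, so the paper's one-line justification is strictly speaking only valid in that regime (and indeed the phrase ``the unique topology on $Y_k$ compatible with its vector space structure'' in the proof of \ref{Borel-function-of-ptD} already presupposes $\dim Y_k<\infty$). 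However, your proposed repair does not work either: the coefficients $\D^j P_a(a)$ lie in $\bigodot^j(\mathbf R^n,Y')$, and pairing $T$ against \emph{finitely many} $Y$-valued test functions produces only finitely many real numbers, which cannot pin these coefficients to a finite-dimensional affine slice of $Y'$ when $\dim Y=\infty$. If you want compactness of the pieces beyond the finite-dimensional case you need a genuinely different mechanism---for example, showing directly that on each piece the assignment $a\mapsto P_a$ is continuous, so that the piece is the graph of a continuous map on a compact subset of $\mathbf R^n$; closedness of the graph alone is not enough in an infinite-dimensional target.
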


\begin{proof} 
	Let $D,D_0$ be the Borel set as in \ref{Borel-function-of-ptD} with $\kappa$ replaced by $-k-\alpha$, $-k$. 
	From \ref{IntroDef:Pt}, we see that $(a,T,P)$ belongs to $D$ 
	if and only if 
	$T$ is $\nu$ pointwise differentiable of order $(k,\alpha)$ at $a$, 
	and in case $k\geq0$, $(a,T,P)$ belongs to $D_0$ 
	if and only if 
	$T$ is $\nu$ pointwise differentiable of order $k$ at $a$ with $k$ jet $P$. 
	Because $\mathbf R^n$ and $Y_k$ are locally compact,
	 we conclude that $(\mathbf R^n \times \{ T\} \times Y_k) \cap D$ is a countable union of compact subsets of 
	 $\mathbf R^n \times \{ T\} \times Y_k$ and 
	 $(\mathbf R^n \times \{ T\} \times Y_k) \cap D_0$ is a Borel subset of $\mathbf R^n \times \{ T\} \times Y_k$,
	  this proves \ref{ThmA-1}, \ref{ThmA-2}, and \ref{ThmA-3}. By \ref{comparison of pointwise differentiability:1} and \cite[2.10]{2021}, the relation $D'_0$ is a function. The conclusion of \ref{ThmA-4} now follows from \ref{rmk:Suslin projection}. Finally, if $Y$ is finite dimensional, then notice that $D_0$ equals the set in the conclusion of \ref{ThmA-5}. By \ref{Borel-function-of-ptD:enum:4}, \ref{rmk:Suslin projection}, and \ref{Bounded subset of strong topology of space of distribution}, $\mathscr D'(\mathbf R^n,Y)_c$ and $\mathscr D'(\mathbf R^n,Y)_s$ has the same Borel sets. We infer that the function mapping $(a,T) \in \mathbf R^n \times \mathscr D'(\mathbf R^n,Y)$ onto the element $P = D_0(a,T)$ uniquely characterized by $(a,T,P) \in D_0$ is a Borel function with $\dmn D_0$ is a Borel set. This proves \ref{ThmA-5}.
\end{proof}

\section{Rectifiability of the family of k jets and a Lusin type theorem}\label{sec:Lusin plus Rect}

In view of the Borel regularity of $\nu$ pointwise differential, Theorem \ref{Thm-C} and Theorem \ref{introThmE} are direct consequence of \cite[4.9 and 4.25]{2021}. Theorem \ref{Thm-C} will also be used in the proof of Theorem \ref{introThmD} (see \ref{Radamacher theorem}).

\begin{theorem}\label{Rect}\ref{Thm-C}
	Suppose $Y$ is a Banach space, 
	$i,k$ are nonnegative integers,
	 $0<\alpha \leq 1$, $0 \leq k + \alpha$, 
	 $1\leq p<+\infty$,
	  $T \in \mathscr D'(\mathbf R^n,Y)$,
	   and $A$ is the set of points of $a\in \mathbf R^n$ such that
	    $T$ is $\nu_{i,p}$ pointwise differentiable of order $(k,\alpha)$ at $a$. 
		Then there exists a sequence of functions 
		$f_1,\dots,$ of class $(k,\alpha)$ mapping $\mathbf R^n$ into $Y'$ 
		and compact sets $C_1,\dots,$ such that $A = \bigcup_{i=1}^\infty C_i$ and
	 \begin{equation}
	 	   \begin{aligned}
	 		&\pt \D^m T(a) = \D^m f_j(a)\\
	 		&\text{for every positive integer $j$, $a\in C_j$, and $m\in \{ 0,\dots,k\}$}.
	 	\end{aligned}
	 \end{equation}
\end{theorem}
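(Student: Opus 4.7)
The plan is to combine the Borel regularity Corollary~\ref{ThmA} with Menne's Whitney-type extension theorem \cite[4.9]{2021}, which provides the analogous conclusion for $\boldsymbol{\nu}^i_{\mathbf{B}(0,1)}$ pointwise differentials. First, I would verify that $\nu_{i,p}$ is a continuous seminorm on $\mathscr{D}_{\mathbf{B}(0,1)}(\mathbf{R}^n,Y)$: for $\theta$ in that space, since $\spt \theta \subset \mathbf{B}(0,1)$ one has
\[
\nu_{i,p}(\theta) \le \mathscr{L}^n(\mathbf{B}(0,1))^{1/p}\, \boldsymbol{\nu}^i_{\mathbf{B}(0,1)}(\theta).
\]
Combining this with \ref{comparison of pointwise differentiability:2} shows that whenever $T$ is $\nu_{i,p}$ pointwise differentiable of order $(k,\alpha)$ at $a$ with $k$-jet $P$, $T$ is also $\boldsymbol{\nu}^i_{\mathbf{B}(0,1)}$ pointwise differentiable of order $(k,\alpha)$ at $a$ with the same $k$-jet $P$; the jet agrees because both notions imply ordinary pointwise differentiability of order $k$ via \ref{comparison of pointwise differentiability:1}, for which uniqueness holds by Definition~\ref{IntroDef:Pt}(2).

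Next, I would apply Corollary~\ref{ThmA-1} with $\nu$ replaced by $\nu_{i,p}$ to write the set $D'$ of all $(a,P)\in\mathbf{R}^n\times Y_k$ for which $T$ is $\nu_{i,p}$ pointwise differentiable of order $(k,\alpha)$ at $a$ with $k$-jet $P$ as a countable union $D' = \bigcup_{j=1}^\infty K_j$ of compact subsets of $\mathbf{R}^n \times Y_k$. By the previous paragraph each $K_j$ is contained in the corresponding graph for $\boldsymbol{\nu}^i_{\mathbf{B}(0,1)}$, so \cite[4.9]{2021} supplies, for each $j$, a function $f_j : \mathbf{R}^n \to Y'$ of class $(k,\alpha)$ satisfying $\D^m f_j(a) = \D^m P(a)$ for every $(a,P) \in K_j$ and $m \in \{0,\ldots,k\}$. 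Setting $C_j$ to be the image of $K_j$ under the projection onto $\mathbf{R}^n$, the $C_j$ are compact, $A = \bigcup_{j=1}^\infty C_j$, and the identity $\pt\,\D^m T(a) = \D^m P(a)$ completes the proof.

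The principal obstacle --- already handled inside \cite[4.9]{2021} --- is translating the uniform closed structure of $K_j$ into a Whitney compatibility condition among the jets attached to different points of $C_j$. This uniformity is exactly what the construction of the sets $E(i,\epsilon)$ in the proof of Corollary~\ref{ThmA} provides: on each $K_j$ the pointwise $\limsup$ condition is converted into a genuine bound valid on the full scale $0<r\le i^{-1}$, which is the form of input required by the Whitney extension machinery in \cite{2021}.
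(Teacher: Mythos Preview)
Your proposal is correct and follows essentially the same approach as the paper: reduce to Menne's rectifiability result \cite[4.9]{2021} via the comparison \ref{comparison of pointwise differentiability:1}/\ref{comparison of pointwise differentiability:2}, and use Corollary~\ref{ThmA} to write $A$ as a countable union of compact sets. Two minor differences in execution: the paper goes directly to ordinary pointwise differentiability (via \ref{comparison of pointwise differentiability:1}) rather than through $\boldsymbol{\nu}^i_{\mathbf{B}(0,1)}$ differentiability, and it applies \cite[4.9]{2021} once to the full set $B=\{a:T\text{ is pointwise differentiable of order }(k,\alpha)\text{ at }a\}$ to obtain $B=\bigcup K_j$ with associated $f_j$, then simply intersects each $K_j$ with the compact pieces of $A$ furnished by \ref{ThmA-3} and relabels; this avoids having to invoke \cite[4.9]{2021} separately on each compact piece of your graph decomposition.
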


\begin{proof}
	Let $B$ be the set of $b\in \mathbf R^n$ such that $T$ is pointwise differentiable of order $(k,\alpha)$. We apply \cite[4.9]{2021} to obtain a sequence of functions $f_1,\dots,$ of class $(k,\alpha)$ mapping $\mathbf R^n$ into $Y'$ and compact sets $K_1,\dots,$ such that $	B = \bigcup_{i=1}^\infty K_i$ and
	\begin{equation}
		 \begin{aligned}
			&\pt \D^m T(a) = \D^m f_j(a)\\
			&\text{for every positive integer $j$, $a\in K_j$, and $m\in \{ 0,\dots,k\}$}.
		\end{aligned}
	\end{equation}
We apply \ref{ThmA} with $\nu$ replaced by $\nu_{i,p}$ to conclude that the set $A$ is a countable union of compact subset of $\mathbf R^n$. Notice that from \ref{comparison of pointwise differentiability:1} we have $A \subset B$. We rearrange the indices to obtain the conclusion.
\end{proof}

\begin{theorem}\ref{introThmE}\label{Lusin theorem}
	Suppose $Y$ is a Banach space, $Y'$ is separable, $i,k$ are nonnegative integers,
	$\nu$ is a real valued seminorm on $\mathscr D_{\mathbf B(0,1)}(\mathbf R^n,Y)$, 
	and $A$ is the set of $a\in \mathbf R^n$ at which the distribution $T$ is $\nu$ pointwise differentiable of order $k$. Then for every $0<\epsilon<+\infty$ there exist
	a function $g$ of class $k$ mapping $\mathbf R^n$ into $Y'$,
	\[ \mathscr L^n(A \sim \{ a : \pt \D^m T(a) = \D^m g(a)\quad \text{for $m=0,\dots,k$}\}) < \epsilon.\] 
\end{theorem}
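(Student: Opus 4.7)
The plan is to obtain this as a direct corollary of the corresponding Lusin approximation for ordinary pointwise differentiability \cite[4.25]{2021}, by using that $\nu$ pointwise differentiability is a strictly stronger condition. No new analytic ingredient is needed.

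First, I invoke Proposition \ref{comparison of pointwise differentiability:1}: any point at which $T$ is $\nu$ pointwise differentiable of order $k$ is automatically a point at which $T$ is pointwise differentiable of order $k$. So, letting $B$ denote the set of $a\in \mathbf R^n$ at which $T$ is pointwise differentiable of order $k$, I have $A \subset B$. Moreover, since the $k$ jet of $T$ at such a point is characterized uniquely as a polynomial of degree at most $k$ satisfying the ordinary pointwise limit \cite[2.10]{2021}, the $k$ jet in the $\nu$ sense coincides with the $k$ jet in the ordinary sense. Hence the pointwise differentials $\pt \D^m T(a)$ for $m = 0,\dots,k$ and $a \in A$ are the same whether they are read off from the $\nu$ condition or from the ordinary condition.

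Second, I apply \cite[4.25]{2021} to the set $B$: since $Y'$ is separable, the cited theorem yields, for the given $\epsilon > 0$, a function $g$ of class $k$ mapping $\mathbf R^n$ into $Y'$ with
\[
\mathscr L^n\bigl(B \without \{ a : \pt \D^m T(a) = \D^m g(a) \text{ for } m = 0,\dots,k\}\bigr) < \epsilon.
\]
For measurability issues, I note that the set on the right is Borel by the Borel regularity Corollary \ref{cor:Borel reg}(\ref{ThmA-4}) applied with $\nu$ taken to be the trivial (zero) seminorm (so that ordinary pointwise differentiability is recovered); here $Y'$ separable implies $Y$ separable, so the hypothesis of that corollary is satisfied.

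Finally, from $A \subset B$ and the identification of $k$ jets above,
\[
A \without \{ a : \pt \D^m T(a) = \D^m g(a) \text{ for } m=0,\dots,k\} \subset B \without \{ a : \pt \D^m T(a) = \D^m g(a) \text{ for } m=0,\dots,k\},
\]
and monotonicity of $\mathscr L^n$ gives the claimed inequality. The only step that might require a little care is the identification of $k$ jets in the preceding paragraph; everything else is packaged in the two cited results. There is no deep obstacle here, which is why the author presents this as a straightforward consequence.
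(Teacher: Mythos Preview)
Your approach is correct and matches the paper's one-line proof, which simply says to combine \ref{comparison of pointwise differentiability:1}, \ref{Borel-function-of-ptD}, and \cite[4.25]{2021}. One small correction: taking $\nu$ to be the zero seminorm does \emph{not} recover ordinary pointwise differentiability---with $\nu\equiv 0$ every $\theta$ satisfies $\nu(\theta)\leq 1$, so the supremum in Definition~\ref{IntroDef:Pt}(4) runs over all test functions, which is a strictly stronger condition than the $\theta$-by-$\theta$ limit in Definition~\ref{IntroDef:Pt}(2). This slip is harmless for the argument, though: the Borel regularity of the ordinary pointwise-differentiability set $B$ is already contained in \cite{2021}, and in any case monotonicity of Lebesgue outer measure requires no measurability hypothesis.
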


\begin{proof}
   Combine \ref{comparison of pointwise differentiability:1}, \ref{Borel-function-of-ptD} and \cite[4.25]{2021}.
\end{proof}

\section{Auxiliary PDE estimates}\label{est}

In preparation for the proof of Theorem \ref{introThmD}, 
this section is dedicated to collecting some preliminary estimates 
for Sobolev functions and the poly-Laplacian.

\begin{definition}\label{Def:Lap}
	Suppose $i$ are positive integers. The differential operator $\Lap$ in $\mathbf R^n$ is written as $\D_1^2+\cdots+\D_n^2$.
\end{definition}
The following theorem is a consequence of the main result in \cite{Agmon2011},
 which gives a priori estimates for weak solutions of elliptic boundary value problems,
  and we shall use it to derive a series of results on $\Lap^i$.
\begin{theorem}\label{Dirichlet problem}\emph{\cite[Theorem 8.1]{Agmon2011}}
	\Copy{Dirichlet problem}{
	Suppose $i$ are  nonnegative integers, 
 $1<p<+\infty$, 
 $G$ is a class $2i$ bounded open subset of $\mathbf R^n$.
  Then there exists a constant $C$ depending on $n,i,p,G$ with  the following property.
   For every $1<q<+\infty$, integer $0\leq j \leq 2i$,
    and $u \in L_q(G)$ the following inequality holds
	\[ |u|_{2i-j,p;G} \leq C(|\Lap^i u|_{-j,p;G}+|u|_{p;G}).\]
	}
\end{theorem}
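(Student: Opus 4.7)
The plan is to obtain the estimate in two steps: first the case $j = 0$, which is classical, and then pass to general $0 < j \leq 2i$ by a duality argument. The case $j = 0$ states that $|u|_{2i,p;G} \leq C(|\Lap^i u|_{p;G} + |u|_{p;G})$, the standard $L^p$ a priori estimate for the polyharmonic operator: interior regularity follows from Calder\'on--Zygmund estimates applied to convolution with the fundamental solution of $\Lap^i$, boundary regularity follows from verifying the Lopatinski--Shapiro complementing condition for the Dirichlet data (which forms a normal system covering $\Lap^i$), and a partition of unity patches the two together. This is exactly the Agmon--Douglis--Nirenberg $L^p$ theory.

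For $j > 0$, I would proceed by duality. Let $p'$ be the exponent conjugate to $p$. Given a multi-index $\alpha$ with $|\alpha| \leq 2i-j$ and a test $\phi \in \mathscr D(G)$ with $|\phi|_{p';G} \leq 1$, to estimate $\int D^\alpha u \cdot \phi$ one solves an auxiliary Dirichlet problem for the polyharmonic operator: find $v \in W_{2i,p'}(G)$ with vanishing Dirichlet data of orders $0,\dots,i-1$ on $\partial G$ and $\Lap^i v = (-1)^{|\alpha|} D^\alpha \phi$ in the distributional sense. Applying the $j=0$ estimate to $v$ in the exponent $p'$, together with the observation that $|D^\alpha \phi|_{-|\alpha|,p';G} \leq |\phi|_{p';G}$, yields $|v|_{2i-|\alpha|,p';G} \leq C$, and hence $|v|_{j,p';G} \leq C$. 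Since $v$ has vanishing Dirichlet data through order $i-1$, repeated integration by parts identifies $\int u \cdot \Lap^i v$ with $\langle \Lap^i u, v\rangle$, which is controlled by $|\Lap^i u|_{-j,p;G} \cdot |v|_{j,p';G}$. Summing the resulting bounds over $|\alpha| \leq 2i-j$ gives the bound on $|u|_{2i-j,p;G}$.

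The main obstacle is the solvability of this auxiliary problem with distributional data $D^\alpha \phi$. The operator $\Lap^i$ with zero Dirichlet data is elliptic but in general only Fredholm of index zero, so invertibility holds only modulo a finite-dimensional kernel, and extending solvability from $L^{p'}$-data to negative-order Sobolev data requires the transposition framework developed in \cite{Agmon2011}. The residual term $|u|_{p;G}$ on the right-hand side of the stated inequality is precisely what absorbs the contribution of this finite-dimensional kernel. Rather than reconstructing this machinery from scratch, the cleanest route is simply to invoke \cite[Theorem 8.1]{Agmon2011} directly, after verifying that the polyharmonic operator together with the Dirichlet boundary conditions satisfies the ellipticity and complementing-condition hypotheses of that theorem.
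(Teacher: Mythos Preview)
The paper does not supply a proof of this statement; it is quoted from \cite[Theorem~8.1]{Agmon2011} and used as a black box for the corollaries that follow. Your final recommendation---to invoke Agmon's theorem directly after checking that the Dirichlet system for $\Lap^i$ satisfies the complementing condition---is therefore exactly what the paper does.

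Your intermediate duality sketch, however, has two genuine gaps. First, the step ``applying the $j=0$ estimate to $v$ \ldots\ yields $|v|_{2i-|\alpha|,p';G}\leq C$'' is circular: the $j=0$ estimate bounds $|v|_{2i,p';G}$ in terms of $|\Lap^i v|_{p';G}=|D^\alpha\phi|_{p';G}$, but $D^\alpha\phi$ is generally not in $L^{p'}$; to get control of $|v|_{2i-|\alpha|,p';G}$ from $|\Lap^i v|_{-|\alpha|,p';G}$ you would need precisely the estimate at level $j=|\alpha|$ that you are trying to establish. Second, the identification $\int_G u\,\Lap^i v=\langle\Lap^i u,v\rangle$ does not follow from integration by parts when $u$ is merely in $L_q(G)$ with no boundary condition and $v$ vanishes only to order $i-1$ on $\partial G$: Green's formula for $\Lap^i$ leaves boundary terms involving normal derivatives of $v$ of order $\geq i$, and these need not vanish. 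Equivalently, $|\Lap^i u|_{-j,p;G}$ is defined by testing against $\mathscr D(G)$, so the pairing $\langle\Lap^i u,v\rangle$ requires $v\in W_{j,p'}^{\diamond}(G)$, which for $j>i$ is strictly more than the Dirichlet condition provides. You correctly diagnose that Agmon's transposition machinery is what closes these gaps; reproducing it would amount to reproving \cite[Theorem~8.1]{Agmon2011}.
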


\begin{corollary}\label{Uniqueness of Dirichlet problem}
	Suppose $n,i,p,G$ are as in \ref{Dirichlet problem}, $i>0$, $u\in L_p(G)$, and $\Lap^i u = 0$. Then $u \in \mathscr E(\overline{G})$.
\end{corollary}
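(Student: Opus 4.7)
The plan is to iterate the a priori estimate of \ref{Dirichlet problem} to bootstrap the regularity of $u$ from $L_p$ to $W_{m,p}$ for arbitrary $m$. First, apply \ref{Dirichlet problem} with $j = 0$ and $q = p$ directly to $u$: since $\Lap^i u = 0$, the inequality collapses to $|u|_{2i,p;G} \leq C|u|_{p;G} < +\infty$, giving $u \in W_{2i,p}(G)$.

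Next, I would proceed by induction on $m$, showing that $u \in W_{m,p}(G)$ implies $u \in W_{m+2i,p}(G)$. The key observation is that since $\Lap^i$ has constant coefficients, it commutes with every distributional derivative, so for each multi-index $\alpha$ with $|\alpha| \leq m$ one has $\Lap^i \D^\alpha u = \D^\alpha \Lap^i u = 0$ in $\mathscr D'(G)$. Moreover, the hypothesis $u \in W_{m,p}(G)$ yields $\D^\alpha u \in L_p(G)$, so \ref{Dirichlet problem} applies to $\D^\alpha u$ with $j = 0$ and gives $|\D^\alpha u|_{2i,p;G} \leq C |\D^\alpha u|_{p;G} < +\infty$. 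Ranging over $\alpha$ with $|\alpha| \leq m$ yields $u \in W_{m+2i,p}(G)$, completing the induction step, and the base case $m = 0$ was handled above. Hence $u \in W_{m,p}(G)$ for every nonnegative integer $m$.

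Finally, since $G$ is of class $2i$ with $i \geq 1$, it has (at least) a Lipschitz boundary, so the Sobolev embedding gives $u \in C^k(\overline G)$ for every $k$, hence $u \in \mathscr E(\overline G)$. The only delicate point is that a single application of \ref{Dirichlet problem} does not exceed $W_{2i,p}$; it is precisely the commutation of $\Lap^i$ with $\D^\alpha$ that allows the same estimate to be reapplied to successively higher derivatives, so the $2i$-regularity of $G$ is reused at every step rather than strengthened. This is the step that has to be set up carefully, but once the commutation identity is in place the induction is essentially routine.
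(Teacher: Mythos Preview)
Your proof is correct and, in fact, more direct than the paper's own argument. The paper splits into two cases: when $p<n$ it first bootstraps the integrability exponent via the Sobolev embedding $W_{2i,p}(G)\hookrightarrow L_{np/(n-p)}(G)$, reapplying \ref{Dirichlet problem} with successively larger exponents until the exponent exceeds $n$, and only then invokes an (unelaborated) ``inductive application of \ref{Dirichlet problem}'' together with the embedding into continuous functions to conclude. You bypass the exponent bootstrap entirely by observing that \ref{Dirichlet problem} can be applied directly to each $\D^\alpha u$ (since $\Lap^i$ commutes with $\D^\alpha$ and $\D^\alpha u\in L_p(G)$), which raises the Sobolev order by $2i$ at every step regardless of $p$. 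This makes the case distinction unnecessary: once $u\in W_{m,p}(G)$ for all $m$, the embedding $W_{m,p}(G)\hookrightarrow C^k(\overline G)$ for $m>k+n/p$ (valid because a $C^{2i}$ domain with $i\geq 1$ is Lipschitz) finishes the argument for any $1<p<\infty$. Your route is shorter and makes explicit the induction that the paper leaves implicit in the phrase ``inductively apply \ref{Dirichlet problem}''.
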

\begin{proof}
	We first consider the case $n<p$.
	 Inductively apply  \ref{Dirichlet problem} to infer
	  $u\in \bigcap_{i=0}^\infty W_{i,p}(G)$. The assertion follows from the Rellich compactness theorem \cite[Theorem 7.26]{MR1814364}.
	
	Now, we show that the second case $p<n$ can be reduced to the first case. 
	In view of \ref{Dirichlet problem}, $u \in W_{2i,p}(G)$. 
	By \cite[Theorem 7.10 and 7.25]{MR1814364} we know that $|u|_{\frac{np}{n-p};G}<+\infty$. 
	We repeat the argument with $p$ replace by $\frac{np}{n-p}$, iterating if necessary until we obtain an exponent $p$ strictly greater than $n$. After finitely many steps, it is reduced to the first case. 

\end{proof}
\begin{corollary}\label{Prior estimate of Dirichlet problem on balls}
	Suppose $i,k$ are nonnegative integers, 
	$1<p<+\infty$. 
	Then there exists a constant $C$ depending on $n,i,k,p$ with the following property. 
	For every $0<r<+\infty$, $1<q<+\infty$, and $ u \in L_q(\mathbf U(0,r)) $, 
	the following inequality holds, 
	\[ |\D^{2i+k}u|_{p;0,r} \leq C (|\D^k\Lap^i u|_{p;0,r}+ r^{-2i-k}|u|_{p;0,r}).\]
\end{corollary}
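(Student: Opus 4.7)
First I would reduce to the case $r=1$ by the change of variables $v(y) = u(ry)$ on $\mathbf{U}(0,1)$. Under this substitution one computes $|\D^{j} v|_{p;0,1} = r^{j-n/p}|\D^{j} u|_{p;0,r}$ and $|\D^{k}\Lap^{i} v|_{p;0,1} = r^{2i+k-n/p}|\D^{k}\Lap^{i} u|_{p;0,r}$. Every term of the desired inequality then scales with the common factor $r^{2i+k-n/p}$, once the last term is written as $r^{-2i-k}|u|_{p;0,r}$, so the general-$r$ case follows from the $r=1$ case. For $k=0$ the $r=1$ version is immediate from Theorem \ref{Dirichlet problem} with $G=\mathbf{U}(0,1)$ and $j=0$, whose left hand side dominates $|\D^{2i} u|_{p;0,1}$ by the definition of $|\cdot|_{2i,p;G}$ in \ref{Lp Modulus for Distribution}.

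For $k\ge 1$, the case $i=0$ being trivial, the strategy is to apply Theorem \ref{Dirichlet problem} to each distributional derivative $\D^{\alpha} u$ with $\alpha\in\Xi(n,k)$, using the commutation $\Lap^{i}\D^{\alpha} = \D^{\alpha}\Lap^{i}$. This yields
\[
|\D^{\alpha} u|_{2i,p;0,1} \le C\bigl(|\D^{\alpha}\Lap^{i} u|_{p;0,1} + |\D^{\alpha} u|_{p;0,1}\bigr).
\]
Summing over $\alpha\in\Xi(n,k)$, the left side dominates $|\D^{2i+k} u|_{p;0,1}$, while on the right the first term is bounded by (a constant times) $|\D^{k}\Lap^{i} u|_{p;0,1}$. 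The intermediate-order terms $|\D^{\alpha} u|_{p;0,1}$ with $|\alpha|=k$ satisfy $0<k<2i+k$, so Theorem \ref{append:Interpolation inequality for Balls} lets me control them by $\epsilon|\D^{2i+k} u|_{p;0,1} + C_{\epsilon}|u|_{p;0,1}$ for arbitrary $\epsilon>0$; picking $\epsilon$ small enough allows the $\epsilon$-term to be absorbed into the left hand side, producing the stated inequality.

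The main obstacle is verifying the applicability of Theorem \ref{Dirichlet problem} to $\D^{\alpha} u$, which demands $\D^{\alpha} u\in L_{q}(\mathbf{U}(0,1))$ for some $1<q<+\infty$. Since the conclusion is vacuous when the right side of the desired inequality is infinite, I assume both $u\in L_{q}$ and $\D^{k}\Lap^{i} u\in L_{p}$, and then bootstrap: iterated application of Theorem \ref{Dirichlet problem} upgrades the regularity of $u$ step by step, combined with a Sobolev embedding argument as in the proof of \ref{Uniqueness of Dirichlet problem} when needed to raise the integrability exponent, ultimately yielding $\D^{\alpha} u\in L_{p}(\mathbf{U}(0,1))$ for all $|\alpha|\le 2i+k-1$. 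This justifies every application of Theorem \ref{Dirichlet problem} in the summation step, completing the proof.
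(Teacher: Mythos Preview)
Your argument is correct but follows a genuinely different route from the paper's. The paper proceeds by \emph{induction on $k$} together with the classical difference-quotient method: assuming the case $k-1$ and the qualitative hypothesis $|\Lap^{i}u|_{k,p;0,1}<\infty$, it forms $u_{v}(x)=|v|^{-1}(u(x+v)-u(x))$ on a shrunk ball $\mathbf U(0,\lambda)$, applies the inductive estimate to $u_{v}$, lets $v\to 0$ to obtain an estimate involving $|\D u|_{p;0,\lambda}$, then uses the interpolation inequality \ref{append:Interpolation inequality for Balls} and sends $\lambda\uparrow 1$. Your approach instead applies Theorem~\ref{Dirichlet problem} directly to each $\D^{\alpha}u$ with $|\alpha|=k$ on the full ball and absorbs the resulting intermediate terms $|\D^{\alpha}u|_{p;0,1}$ via one application of \ref{append:Interpolation inequality for Balls}; no induction or difference quotients are needed. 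This is shorter and conceptually cleaner, while the paper's method has the advantage of simultaneously establishing the qualitative regularity $u\in W_{2i+k,p}$ as part of the induction.

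One point to tighten: your bootstrapping step (``iterated application of Theorem~\ref{Dirichlet problem} upgrades the regularity of $u$'') does not quite run on Theorem~\ref{Dirichlet problem} alone. That theorem needs control of $\Lap^{i}u$ in some norm, whereas you only assume $\D^{k}\Lap^{i}u\in L_{p}$. What you actually need first is the elementary (but separate) fact that a distribution on $\mathbf U(0,1)$ whose $k$-th order derivatives all lie in $L_{p}$ is itself in $W_{k,p}(\mathbf U(0,1))$; once $\Lap^{i}u\in W_{k,p}$ is known, your bootstrap via Theorem~\ref{Dirichlet problem} does go through and yields $\D^{\alpha}u\in L_{p}$ for $|\alpha|\le k$. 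Note that the paper sidesteps this by simply \emph{assuming} $|\Lap^{i}u|_{k,p;0,1}<\infty$ at the start of the inductive step, so the two proofs are on equal footing regarding this qualitative issue.
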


\begin{proof}
	Assume $a = 0$, $r =1$.
	We apply \ref{Dirichlet problem} with $G$ replaced by $\mathbf U(0,1)$ to obtain the constant the case $k=0$.
	Suppose the results holds for $k-1$ and $k> 0$ with the 
	constants $C_{\ref{Prior estimate of Dirichlet problem on balls}}(n,i,p,k-1)$ and $|\Lap^i u|_{k,p;0,1}<+\infty$. 
	From \ref{append:Interpolation inequality for Balls}, we have $u\in W_{2i+k-1,p}(\mathbf U(0,1))$.
	 For every $0<\lambda <1$, $v\in \mathbf R^n$, and $0<|v| <1- \lambda$ we define $u_v \in W_{2i+k-1,p}(\mathbf U(0,\lambda))$ by
	\begin{equation}
		u_v(x) = \frac{u(x+v) -  u(x)}{|v|} \quad \text{for $\mathscr L^n$ almost every $x\in \mathbf U(0,\lambda)$}.
	\end{equation}
	Using the equation $\Lap^i u_v = (\Lap^i u)_v$, we estimate by inductive hypothesis, 
	\[ \begin{aligned}|\D^{2i+k-1}u_v|_{p;a,\lambda} \leq
		  C_{\ref{Prior estimate of Dirichlet problem on balls}} (|\D^{k-1}(\Lap^iu)_v|_{p;a,\lambda} 
		+ \lambda^{-2i-k+1} |u_v|_{p;a,\lambda}).\end{aligned}\]
	Since $1<p<+\infty$, we may let $v \to 0$ to conclude that $u$ is $2i+k$ times weakly differentiable on $\mathbf U(0,\lambda)$ and
	\begin{equation}
		\label{Prior estimate of Dirichlet problem on balls:estimate:1}
		\begin{aligned}
		|\D^{2i+k}u|_{p;a,\lambda}\leq & n C_{\ref{Prior estimate of Dirichlet problem on balls}}(|\D^{k}\Lap^i u|_{p;a,\lambda} 
		+ \lambda^{-2i-k+1} |\D u|_{p;a,\lambda}).
	\end{aligned}
	\end{equation}
Combine \ref{Prior estimate of Dirichlet problem on balls:estimate:1}
 and \ref{append:Interpolation inequality for Balls}, and let $\lambda \uparrow 0$,
  to obtain the conclusion.
\end{proof}
\begin{corollary}\label{Prior estimate of Dirichlet problem on balls for zero boundary value function}
	Suppose $n,i$ are nonnegative integers, $0<i$, $1<p<+\infty$, and $G$ is a bounded open subset of class $2i$ in $\mathbf R^n$. Then there exists a constant $C$ depending on $n,i,p,G$ with the following property. For every $u\in W_{2i-j,p}^\diamond(G)$ and $j\in \{ 0,1,\dots,2i-1\}$, 
	\[ |u|_{2i-j,p;G} \leq C |\Lap^iu|_{-j,p;G}.\]
\end{corollary}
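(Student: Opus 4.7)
\emph{Proof sketch.} The plan is to combine the \emph{a priori} estimate \ref{Dirichlet problem},
\[
|u|_{2i-j,p;G} \leq C\bigl(|\Lap^i u|_{-j,p;G} + |u|_{p;G}\bigr),
\]
with a Rellich-compactness argument that absorbs the lower order term $|u|_{p;G}$ on the right by exploiting the vanishing trace data built into the hypothesis $u \in W_{2i-j,p}^\diamond(G)$. It thus suffices to establish the reduced inequality
\[
|u|_{p;G} \leq C\,|\Lap^i u|_{-j,p;G} \qquad \text{for every } u \in W_{2i-j,p}^\diamond(G),
\]
which when combined with \ref{Dirichlet problem} gives the stated conclusion.

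I would prove the reduced inequality by contradiction. Assuming it fails, one selects a sequence $u_k \in W_{2i-j,p}^\diamond(G)$ with $|u_k|_{p;G} = 1$ and $|\Lap^i u_k|_{-j,p;G} \to 0$. Applying \ref{Dirichlet problem} shows that $\{u_k\}$ is bounded in $W_{2i-j,p}(G)$; since $j \leq 2i-1$ gives $2i-j \geq 1$, $G$ has $C^{2i}$ boundary, and $1 < p < \infty$, the Rellich-Kondrachov theorem \cite[Theorem 7.26]{MR1814364} together with reflexivity yields a subsequence converging strongly in $L^p(G)$ and weakly in $W_{2i-j,p}(G)$ to some $u$ in the weakly closed subspace $W_{2i-j,p}^\diamond(G)$ with $|u|_{p;G} = 1$. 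For each $\theta \in \mathscr D(G)$,
\[
|\langle \Lap^i u_k, \theta\rangle| \leq |\Lap^i u_k|_{-j,p;G}\cdot|\D^j \theta|_{q;G} \longrightarrow 0,
\]
(with $q$ the Hölder conjugate of $p$), while weak convergence yields $\langle \Lap^i u_k, \theta\rangle \to \langle \Lap^i u, \theta\rangle$, forcing $\Lap^i u = 0$ in $\mathscr D'(G)$.

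By \ref{Uniqueness of Dirichlet problem}, $u$ extends to an element of $\mathscr E(\overline{G})$. Since $u \in W_{2i-j,p}^\diamond(G)$, the traces $\D^\alpha u|_{\partial G}$ vanish for every $|\alpha| \leq 2i-j-1$. Integrating by parts $i$ times in $\int_G u\cdot\Lap^i u\,dx$ transfers $i$ derivatives onto $u$ while the boundary contributions are annihilated by the Dirichlet data, producing an energy identity that forces $\D^i u \equiv 0$. Then $u$ is a polynomial of degree less than $i$; iteratively applying the boundary vanishing on the smooth hypersurface $\partial G$ pins down all its coefficients, yielding $u = 0$ and contradicting $|u|_{p;G} = 1$.

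The main obstacle will be the uniqueness step: the boundary-term cancellation in the integration by parts needs $\D^\alpha u|_{\partial G} = 0$ for every $|\alpha| \leq i-1$, which membership in $W_{2i-j,p}^\diamond(G)$ supplies precisely when $2i-j-1 \geq i-1$, i.e., $j \leq i$. In the remaining range $i < j \leq 2i-1$ the naive energy argument fails (polyharmonic functions with only low-order boundary vanishing, such as $1-|x|^2$ on the unit ball for $i\geq 2$, obstruct it), and a more refined mechanism — for instance bootstrapping additional trace regularity from $\Lap^i u = 0$ through the $C^{2i}$ boundary of $G$, or a direct duality with the polyharmonic Green's operator tailored to the $W^{-j,p}$ topology — would be required to complete the argument in that regime.
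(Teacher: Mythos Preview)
Your approach is essentially the paper's: assume failure, normalise $|u_k|_{p;G}=1$, use \ref{Dirichlet problem} to bound $|u_k|_{2i-j,p;G}$, extract a limit $u$ via Rellich, upgrade it to $\mathscr E(\overline G)$ through \ref{Uniqueness of Dirichlet problem}, and then argue $u=0$ from $\Lap^i u=0$ together with the inherited boundary data. The paper is terser at the last step---it simply asserts $u\in W_{2i,2}^\diamond(G)$ and invokes the $L^2$ energy identity---whereas you unpack the integration by parts and track exactly which traces are available.

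Your hesitation about the range $i<j\le 2i-1$ is well founded, and in fact the statement is false there: your own example $u=1-|x|^2$ on the unit ball, with $i=2$ and $j=3$, lies in $W_{1,p}^\diamond(B)$ and satisfies $\Lap^2 u=0$, so $|\Lap^2 u|_{-3,p;B}=0$ while $|u|_{1,p;B}>0$. The paper's proof shares this gap: from $u\in W_{2i-j,p}^\diamond(G)$ and $u\in\mathscr E(\overline G)$ one only gets $u\in W_{2i-j,2}^\diamond(G)$, not the claimed $W_{2i,2}^\diamond(G)$; the missing traces of order $\ge 2i-j$ cannot be recovered. For $j\le i$ (in particular $j=i$, the only case actually used later, in \ref{Dirichlet problem on unit ball} and \ref{example-L_p}) your energy argument goes through cleanly, so your writeup is in fact more accurate than the paper's on this point.
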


\begin{proof}
	
	We apply \ref{Dirichlet problem} to obtain $C_{\ref{Dirichlet problem}}(n,i,p,G)$.
	If the inequality does not hold, then there will exists $j\in \{ 0,1,\dots,2i-1\}$ and a sequence $u_1,\dots,\in  W_{i,p}^\diamond(G)$ such that
	\[  |u_k|_{2i-j,p;G} >k |\Lap^i u_k|_{-j,p;G}, \quad |u_k|_{p;G} =1 \quad \text{for $k\in \{1,2,\dots,\}$}.\]
	By \ref{Dirichlet problem}, for those positive integer $k$ strictly greater than $2C_{\ref{Dirichlet problem}}$  we have 
	\begin{align*}
		|u_k|_{2i-j,p;G} &\leq C_{\ref{Dirichlet problem}}(|\Lap^i u_k|_{-j,p;G} + |u_k|_{p;G})\\
						 &\leq C_{\ref{Dirichlet problem}}(k^{-1}|u_k|_{2i-j,p;G} + |u_k|_{p;G})\\
						 &\leq C_{\ref{Dirichlet problem}}((2C_{\ref{Dirichlet problem}})^{-1}|u_k|_{2i-j,p;G} + |u_k|_{p;G})
	\end{align*} 
	and consequently
	the sequence $u_1,\dots$ forms a bounded subset of $W_{2i-j,p}(G)$.
	By the Rellich compactness theorem \cite[Theorem 7.26]{MR1814364}, we may assume $u_1,\dots,$ converges to $u$ in $W_{2i-j-1,p}^\diamond(G)$.
	By \ref{Uniqueness of Dirichlet problem}, we also have $u \in \mathscr E(\overline{G})$. Therefore 
	$u \in  W_{2i,2}^\diamond(G),  |u|_{p;G}=1$,
	but $\lim\limits_{k\to \infty}|\Lap^i u_k|_{-j,p;G } = 0$ implies $\Lap^i u=0 $ and  $u=0$, which contradicts to $|u|_{p;G}=1$.
\end{proof}

\begin{corollary} \label{Dirichlet problem on unit ball}
	Suppose $n,i$ are nonnegative integers, $1<p<+\infty$, $0<R<+\infty$, and $a \in\mathbf R^n$. For every $f \in W_{i,q}^{\diamond}(\mathbf U(a,R))'$ there exists an $\mathscr L^n$ almost  unique $u \in W_{i,p}^\diamond(\mathbf U(a,R))$ such that $\Lap^i u = f$ and
	\[ |\D^i u|_{p;a,R} \leq C |\Lap^i u|_{-i,p;a,R},\]
	where $p^{-1}+q^{-1} = 1$ and $0<C<+\infty$ depends on $n,i,p$.
\end{corollary}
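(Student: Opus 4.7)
The plan is to reduce to the case $a=0$, $R=1$ by translation and scaling, and then derive both uniqueness and existence from the a priori estimate \ref{Prior estimate of Dirichlet problem on balls for zero boundary value function}. Under the scaling $x \mapsto Rx$ both $\Lap^i$ and the $W_{i,q}^\diamond$ seminorm transform by explicit powers of $R$, so the constant ultimately depends only on $n,i,p$.

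For uniqueness and the estimate, I would apply \ref{Prior estimate of Dirichlet problem on balls for zero boundary value function} with $G=\mathbf U(0,1)$ and $j=i$, which yields $|u|_{i,p;0,1} \leq C|\Lap^i u|_{-i,p;0,1}$ for every $u \in W_{i,p}^\diamond(\mathbf U(0,1))$. Uniqueness then follows because the difference of any two solutions is annihilated by $\Lap^i$, hence has zero $W^{i,p}$ seminorm. The asserted inequality is immediate once one notes that \ref{append:Poincare inequality for zero boundary value function on balls} makes $|u|_{i,p;0,1}$ and $|\D^i u|_{p;0,1}$ comparable on $W_{i,p}^\diamond(\mathbf U(0,1))$.

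For existence I would introduce a continuous symmetric bilinear form
\[
B : W_{i,p}^\diamond(\mathbf U(0,1)) \times W_{i,q}^\diamond(\mathbf U(0,1)) \to \mathbf R
\]
realizing $\Lap^i$ via integration by parts: for $i=2m$ even, set $B(u,\phi) = \int \Lap^m u \cdot \Lap^m \phi\, d\mathscr L^n$, and for $i=2m+1$ odd, set $B(u,\phi) = -\int \langle \D\Lap^m u,\D\Lap^m \phi\rangle\, d\mathscr L^n$. By H\"older's inequality $B$ is continuous; by iterated integration by parts it agrees with $\int (\Lap^i u)\phi\,d\mathscr L^n$ whenever $u,\phi \in \mathscr D(\mathbf U(0,1))$, and it is manifestly symmetric. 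The pairing induces a bounded linear operator $L : W_{i,p}^\diamond(\mathbf U(0,1)) \to (W_{i,q}^\diamond(\mathbf U(0,1)))'$ by $Lu(\phi) = B(u,\phi)$. The a priori estimate just invoked (combined with the equivalence of $|\phi|_{W_{i,q}^\diamond}$ and $|\D^i\phi|_q$) shows $L$ is bounded below, hence injective with closed range. By reflexivity of $W_{i,q}^\diamond$ and the symmetry of $B$, the adjoint $L^* : W_{i,q}^\diamond(\mathbf U(0,1)) \to (W_{i,p}^\diamond(\mathbf U(0,1)))'$ is the analogous operator with $p$ and $q$ interchanged; applying \ref{Prior estimate of Dirichlet problem on balls for zero boundary value function} with $p$ replaced by $q$ shows $L^*$ is also injective. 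The closed range theorem then yields $\operatorname{range} L = (\ker L^*)^{\perp} = (W_{i,q}^\diamond(\mathbf U(0,1)))'$, so $L$ is surjective. Any preimage $u$ of $f$ under $L$ satisfies $\Lap^i u = f$ in the distributional sense.

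The main obstacle is precisely the self-adjointness of $\Lap^i$ encoded in the bilinear form $B$: it is essential that the same form simultaneously represent both $L$ and $L^*$, so that both benefit from \ref{Prior estimate of Dirichlet problem on balls for zero boundary value function} applied with the conjugate exponent. Once this symmetry is set up cleanly, the rest is standard Banach-space duality.
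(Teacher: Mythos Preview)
Your proof is correct and takes a genuinely different route to existence than the paper. Both arguments share the reduction to $a=0$, $R=1$ and both draw uniqueness and the final estimate directly from \ref{Prior estimate of Dirichlet problem on balls for zero boundary value function} with $j=i$. For existence, however, the paper first restricts to $p<2$ (handling $p>2$ by a reflexivity/duality remark), solves the equation in the Hilbert space $W_{i,2}^\diamond$ via the Riesz representation theorem for the inner product $u\bullet v=\int \D^i u\bullet \D^i v\,d\mathscr L^n$, and then approximates a general $f\in (W_{i,q}^\diamond)'$ by elements of $(W_{i,2}^\diamond)'$, using the a~priori estimate to show the resulting sequence of solutions is Cauchy in $W_{i,p}^\diamond$. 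Your argument bypasses the Hilbert case and the approximation entirely: you encode $\Lap^i$ as a symmetric bilinear form $B$ on $W_{i,p}^\diamond\times W_{i,q}^\diamond$, observe that the a~priori estimate (applied once with exponent $p$ and once with exponent $q$) makes both the induced operator $L$ and its adjoint $L^\ast$ injective with closed range, and conclude surjectivity from the closed range theorem. This is cleaner and treats all $1<p<\infty$ uniformly; the paper's approach is more constructive, producing the solution as an explicit limit, and requires only the Riesz lemma rather than the closed range theorem.
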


\begin{proof}
	Assume $0<i$, $a=0$, and $R=1$. We apply \ref{Prior estimate of Dirichlet problem on balls for zero boundary value function} to obtain $C_{\ref{Prior estimate of Dirichlet problem on balls for zero boundary value function}}(n,i,p,\mathbf U(0,1))$. In view of \ref{Prior estimate of Dirichlet problem on balls for zero boundary value function} and the fact that the spaces $W_{i,p}^
	\diamond(\mathbf U(0,1))$ are reflexive, it is sufficient to prove existence of $u$ for the case $p<2$. 
	 Next, we approximate $f$ by a sequence $f_1,\dots,\in W_{i,2}^\diamond(\mathbf U(0,1))'$ 
	 in $W_{i,q}^\diamond(\mathbf U(0,1))'$,
	 we will show that such approximating sequence must exists,
	 by Hahn-Banach theorem it is sufficient to show that an element $\alpha \in W^\diamond_{i,q}(\mathbf U(0,1))''$
	 vanishes if and only if $\alpha$ vanishes on $W_{i,2}^\diamond(\mathbf U(0,1))'$,
	 this follows from $W_{i,q}(\mathbf U(0,1))$ is reflexive.
	  In view of \ref{Poincare inequality}, the topology on $W_{i,2}^
	 \diamond(\mathbf U(0,1))$ is induced by the following inner product, 
	 \[ \textstyle u \bullet v = \int_{\mathbf U(0,1)} \D^i u \bullet \D^i v \ d \mathscr L^n \ \text{for every $u,v\in W_{i,2}^\diamond(\mathbf U(0,1))$}.\]
	 It follows that there exists a sequence $u_1,\dots, \in W_{i,2}^\diamond(\mathbf U(0,1))$ such that
	\[ \Lap^i u_k = f_k \quad \text{for $k \in \{1,2,\dots\}$}.\]
	
	Notice that $W_{i,2}^\diamond(\mathbf U(0,1)) \subset W_{i,p}^\diamond(\mathbf U(0,1))$.
	We apply \ref{Prior estimate of Dirichlet problem on balls for zero boundary value function} to estimate
		\[ |\D^i(u_k - u_{k'})|_{p;0,1} \leq C_{\ref{Prior estimate of Dirichlet problem on balls for zero boundary value function}} |f_k - f_{k'}|_{-i,p;0,1} \quad \text{for $k,k' \in \{1,2,\dots,\}$}.\]
	Consequently, 
	the convergent sequence $f_1,f_2,\dots,$ implies that there exists a $u\in W_{i,p}^\diamond(G)$, 
	such that $\lim\limits_{i\to\infty}|u_k - u|_{i,p;G} = 0$. This $u$ satisfies the conclusion.
\end{proof}

\section{A Rademacher  type theorem}\label{Rade_Pt}
First we prove the theorem assuming $T$ has $0$ $k$ jet,
on $A$ (see \ref{Radamacher theorem for distribution with k jet equals 0}).
Roughly speaking, this is done by decomposing $A$ into compact subsets $A_s$
on which we may obtain a representation of $T$ as $\Lap^i u$ for $u\in W_{i,p}^\diamond(\mathbf U(0,R_0))$.
These subset are themselves decomposed into subsets $B^s_l$ on which $u$ is 
$\nu_{i,p}$ pointwise differentiable with uniform estimates (see \ref{Uniform decay on the difference}).
For each $B_{s,l}$, the uniform estimates allow us to find polyharmonic functions
 $v_{a,r} \in W_{i,p}^\diamond(\mathbf U(0,1))$ which approximat $u$ with good \textit{priori}
  estimates. In \ref{Pointwise comparison decay to global comparison decay}, 
  it is shown that theise approximating functions may be patched together to obtain 
  into a function $v$ which is of class $W_{i,p}^{loc}$ in a neighborhood of $B^s_l$,
  and which ingherites the a \textit{priori} estimates of the $v_{a,r}$.
  The function $v$ allow as to invoke Calder\'on's theorem \cite[Theorem 10]{MR0136849}
   and then \ref{Rešhetnyak’s} to finish the proof of the theorem.
   The full theorem \ref{introThmD} is then proven in \ref{Radamacher theorem} 
   by invoking \ref{Rect} to show that we can reduce to 
   \ref{Radamacher theorem for distribution with k jet equals 0}
   by substracting a function of class $k$ from $T$.
\begin{lemma}\label{Pointwise comparison decay to global comparison decay}
	Suppose $1< p<+\infty$, $0 \leq \lambda <+\infty$,
	 and $i$ is a nonnegative integer. 
	  Then there exists a constant $\Delta$ with the following property. 
	  If $A$ is a closed subset of $\mathbf R^n$, 
	  $U$ is an open subset,
	   $0<M,\gamma<+\infty$, 
	   $ i \leq \lambda <+\infty$, 
$ u \in W_{i,p}(U)$,
$0<\kappa <\inf\{ |x-a| : a\in A,x\in \mathbf R^n \sim U\}$, 
$k$ is the largest nonnegative integer less than or equal to $\lambda -i$,
 and for every $a\in A$ and $0<r\leq \kappa$,
  there exist $v_{a,r} \in W_{i,p}(\mathbf U(a,r))$ and 
  polynomial function $P_a$ with value in $\mathbf R$ of degree at most $2i+k-1$ such that
\begin{equation}\label{Pointwise comparison decay to global comparison decay:pointwise decay condition}
	\begin{aligned}
		&u-v_{a,r} \in W_{i,p}^\diamond(\mathbf U(a,r)),\quad
		r^{-n/p-i} |u- P_a|_{p;a,r} \leq \gamma ,\\
		&r^{-n/p} |\D^i(u-v_{a,r})|_{p;a,r} \leq M r^\lambda ,\quad
		\Lap^i v_{a,r} =0.
	\end{aligned}
\end{equation}
Then there exists 
$v \in W_{2i+k,p}^{loc}(\mathbf R^n \cap \{ x : \dist(x,A)<\kappa/36\})$ 
such that for every $a\in A$, $0\leq j \leq i$, and $0<r\leq \kappa/72$
\begin{equation}
	|\D^j(u-v)|_{p;a,r} \leq \Delta M r^{n/p+\lambda+i-j}.
\end{equation}
\end{lemma}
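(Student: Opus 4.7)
The plan is a Whitney-type patching argument: cover a neighborhood of $A$ by balls at scales comparable to the distance from $A$, use the given polyharmonic approximants $v_{a,r}$ on each ball, and glue them via a subordinate partition of unity. Let $N = \{x : \dist(x,A) < \kappa/36\}$. I would choose a countable family $\{x_m\} \subset N$ and radii $\rho_m$ comparable to $\dist(x_m,A)$ (up to universal constants) so that $\{\mathbf{U}(x_m, \rho_m)\}$ forms a locally finite bounded-overlap cover of $N$, together with a smooth partition of unity $\{\phi_m\}$ satisfying the standard Whitney bounds $\|\D^j \phi_m\|_{\infty} \leq C_j \rho_m^{-j}$. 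For each $m$ I would take $a_m \in A$ a nearest point to $x_m$ and $r_m$ a fixed multiple of $\dist(x_m,A)$, small enough that $\mathbf{U}(x_m, 2\rho_m) \subset \mathbf{U}(a_m, r_m) \subset U$. The candidate is
\[ v = \sum_m \phi_m \, v_{a_m, r_m} \quad \text{on } N. \]

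The key technical step is a consistency estimate: for two overlapping Whitney balls (necessarily with comparable scales), the difference $w = v_{a_m,r_m} - v_{a_{m'},r_{m'}}$ is polyharmonic on their common domain. Using the hypothesis on $\D^i(u - v_{a,r})$ together with Poincaré (\ref{Poincare inequality}) and the polynomial hypothesis on $P_a$, one obtains an $L^p$ bound on $w$ of order $M r^{n/p + i + \lambda}$ on the overlap. Corollary \ref{Prior estimate of Dirichlet problem on balls} then lifts this to bounds on $\D^{2i+k} w$ on a slightly interior subball, giving derivative control on $v$ up to order $2i+k$ and hence the claimed $W_{2i+k,p}^{loc}$ regularity of $v$ on $N$.

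For the pointwise decay at $a \in A$ and $0 < r \leq \kappa/72$, I would use $\sum_m \phi_m = 1$ to write $u - v = \sum_m \phi_m(u - v_{a_m, r_m})$ and apply Leibniz. Only Whitney balls with $r_m \leq C r$ enter the sum on $\mathbf{U}(a, r)$, and they have bounded overlap. The case $j = i$ follows directly from the hypothesis $r_m^{-n/p}|\D^i(u - v_{a_m,r_m})|_{p;a_m,r_m} \leq M r_m^\lambda$ after summing over the bounded overlap. For $j < i$ I would combine \ref{Poincare inequality} applied to $u - v_{a_m, r_m}$ on $\mathbf{U}(a_m,r_m)$ (yielding a polynomial $Q_m$ of degree less than $i$ such that $|u - v_{a_m,r_m} - Q_m|_{p;a_m,r_m}$ gains the extra $r^{i-j}$ factor) with the polynomial hypothesis on $P_{a_m}$ (to identify $Q_m$ with the appropriate jet piece of $P_{a_m}$ and absorb it consistently across $m$); the Leibniz contributions $\D^{i-j}\phi_m \cdot \D^j(\cdot)$ are then handled by the standard Whitney bound on $\phi_m$.

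The main obstacle will be the consistency estimate: the hypothesis controls only $\D^i(u - v_{a,r})$ and not $u - v_{a,r}$ itself, so comparing $v_{a_m,r_m}$ with $v_{a_{m'},r_{m'}}$ on an overlap requires identifying how the polynomial difference $P_{a_m} - P_{a_{m'}}$ enters and showing that it is controlled uniformly across the Whitney cover. A secondary difficulty is bootstrapping via \ref{Prior estimate of Dirichlet problem on balls} from the $L^p$ bound on $w$ to the full $W_{2i+k,p}$ bound while preserving the correct scaling in $r$, which requires iteration of the interior estimate tailored to the Whitney scales and careful tracking of the bounded-overlap combinatorics so that the resulting constant $\Delta$ depends only on $n, i, p, \lambda$.
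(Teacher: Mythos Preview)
Your overall strategy---Whitney cover, set $v=\sum_m\phi_m v_{a_m,r_m}$, prove a consistency estimate for overlapping $v_{a,r}$'s via polyharmonicity, then Leibniz---is exactly what the paper does. Two points need correction.

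First, your ``main obstacle'' is not one: you have overlooked the hypothesis $u-v_{a,r}\in W_{i,p}^\diamond(\mathbf U(a,r))$. The Poincar\'e inequality for zero-trace functions (\ref{append:Poincare inequality for zero boundary value function on balls}) then gives
\[
|\D^j(u-v_{a,r})|_{p;a,r}\le r^{i-j}\,|\D^i(u-v_{a,r})|_{p;a,r}\quad\text{for }0\le j\le i,
\]
with no polynomial to subtract. Hence $|v_s-v_t|_p\le|u-v_s|_p+|u-v_t|_p$ is immediately controlled, and no comparison of $P_{a_m}$ with $P_{a_{m'}}$ is ever needed; your proposed use of \ref{Poincare inequality} to produce auxiliary polynomials $Q_m$ is unnecessary. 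The hypothesis on $P_a$ is used in a different place: after bounding $|\D^\alpha\Lap^i v|_{p;a,r}$, the paper applies \ref{Prior estimate of Dirichlet problem on balls} to $v$ itself, and the zeroth-order term $r^{-2i-k}|v-P_a|_{p;a,r}$ there is where $\gamma$ enters.

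Second, a Whitney cover with $\rho_m\sim\dist(x_m,A)$ covers only $N\setminus A$, so your $v$ is a priori defined only off $A$; since $A$ may have positive measure and $u$ is merely in $W_{i,p}$, you cannot simply declare $v\in W_{2i+k,p}^{loc}(N)$. The paper handles this by inserting a scale parameter $\delta>0$: the partition has $h(x)\sim\sup\{\dist(x,A),\delta\}/20$, so $w_\delta=\sum_s\zeta_s v_{\xi(s),120h(s)}$ is smooth on all of $N$. One proves the decay estimate $|\D^j(u-w_\delta)|_{p;a,r}\lesssim M r^{n/p+\lambda+i-j}$ for $\delta\le r$ and a uniform-in-$\delta$ bound on $|\D^{2i+k}w_\delta|_{p;a,r}$, then extracts $v$ as a weak $W_{2i+k,p}^{loc}$ limit of a subsequence $w_{\delta_l}$ via Rellich compactness. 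Your direct construction would need a separate argument to cross $A$.
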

\def\FuncAccent#1#2{#1_{#2}}
\begin{proof}
	Choice of constants
	\[C_{\ref{append:Interpolation inequality for Balls}}(n,k,p,1), \ C_{\ref{append:Poincare inequality for zero boundary value function on balls}}(n,2i)
	, \ C_{\ref{Prior estimate of Dirichlet problem on balls}}(n,i,k,p), \ C_{\ref{Dirichlet problem on unit ball}}(n,i,p). \] 
	In the proof, constant depends on $(n,i,p,\lambda)$ is denoted by $\Delta_1,\Delta_2,\dots$

	For every $0<\delta \leq \kappa/18$, we
	apply \citep[3.13]{MR41:1976} with $\Phi = \{ \mathbf R^n \sim A\} \cup \{ \mathbf U(a,\delta) : a\in A\}$, to obtain
	a function $h$ defined by 
	\begin{equation}\label{Def of h}
		h(x) = \frac{1}{20} \sup \{ \inf \{ 1 , \dist(x,\mathbf R^n \sim T)\} : T \in \Phi\} \quad \text{for every $x\in \mathbf R^n$},
	\end{equation}
	a countable set $S \subset \mathbf R^n$, functions $\zeta_s$ corresponding to $s\in S$ such that
	\begin{equation*}
		\begin{aligned}
			&\{ \mathbf B(s,h(s)) : s\in S\} \quad \text{is disjointed},\quad
			\mathbf R^n = \bigcup \{ \mathbf B(s,5h(s)) : s\in S\} ,\\
			&\zeta_s \in \mathscr D_{\mathbf B(s,10h(s))}(\mathbf R^n) \quad \text{for every $s\in S$},
		\end{aligned}
	\end{equation*}
	 constants $0\leq V_0,V_1,\dots,<+\infty$ depends on $n$, and function $\xi$ mapping $\mathbf R^n \sim A$ to $A$ such that
	\begin{equation}\label{Pointwise comparison decay to global comparison decay:partition of unity}
		\begin{aligned}
		&|\D^k \zeta_s(x)| \leq V_i h(x)^{-k} \quad  \text{for every $x\in \mathbf R^n$ and $s\in S$},\\
		&|\xi(s) -s| = \dist(s,A) \quad \text{for every $s\in \mathbf R^n \sim A$}.
		\end{aligned}
 	\end{equation}
 For every $x\in \mathbf R^n$, let $ S_x = S \cap \{ s : \mathbf B(s,10h(s)) \cap \mathbf B(x,10h(x))\}$.
 Note $\card S_x \leq (129)^n$, $\sum_{s\in S_x} \zeta_s(x) = 1$, and for $s\in S_x$,
 $1/3\leq \frac{h(x)}{h(s)} \leq 3$. 
 	After verifying (see \eqref{Def of h} and \cite[first paragraph of proof of 2.8]{e5d4609f3eb74629ad6704c7dc209467}),
 	\begin{equation}\label{Pointwise comparison decay to global comparison decay:verified:1}
 		\begin{aligned}
 			&20 h(x) \leq \sup \{ \dist(x,A) ,\delta\} \quad \text{for every $x\in \mathbf R^n$},\\
 			&\mathbf B(x,20 h(x)) \subset \mathbf B(\xi(s),120 h(s)) \\ 
 			&\text{for every $x\in \mathbf R^n$, $\dist(x,A) \leq \frac{\kappa}{18}$ and $s\in S_x$},
 		\end{aligned}
 	\end{equation}
 	set $R = \bigcup \{ S_x : x\in \mathbf R^n, \dist(x,A) \leq \kappa/18\}$ and define for every $s\in R$, functions  $v_s,P_s$ and a function $\FuncAccent{w}{\delta}$ depends on $\delta$ by the following equations.
 	\begin{equation}
 		\begin{aligned}	
 			&v_s(x) =\begin{cases}
 				v_{\xi(s),120h(s)} &: x\in \mathbf U(\xi(s),120 h(s)),\\
 				0 &: x \in \mathbf R^n \sim \mathbf U(\xi(s),120 h(s)) ,
 			\end{cases} \\
 			&w_\delta(x)= \sum_{s\in S} \zeta_s(x)v_s(x) \quad \text{for every $x\in \mathbf R^n$}, \qquad P_s = P_{\xi(s)}.
 		\end{aligned}
 	\end{equation}

For every $x\in \mathbf R^n$ with $\dist(x,A) \leq \kappa/18$, $j\in \{ 0,1,\dots,i\}$, and $s,t\in S_x$,  we estimate by 
 \ref{append:Poincare inequality for zero boundary value function on balls} and \eqref{Pointwise comparison decay to global comparison decay:pointwise decay condition} that
\begin{equation}\label{Estimate of comparison to the target object}
	\begin{aligned}
		|\D^j(u-v_t)|_{p;x,20h(x)} &\leq |\D^j(u-v_t)|_{p;s,120h(s)}\\
								   &\leq C_{\ref{append:Poincare inequality for zero boundary value function on balls}} [120h(s)]^{i-j}|\D^i(u-v_t)|_{p;s,120h(s)}\\
								   &\leq C_{\ref{append:Poincare inequality for zero boundary value function on balls}}[120 h(s)]^{i-j+n/p+\lambda}M\\
								   &\leq\Delta_1 h(x)^{i-j+n/p+\lambda}M,
\end{aligned}
\end{equation}
where $\Delta_1 = C_{\ref{append:Poincare inequality for zero boundary value function on balls}} 360^{i+n/p + \lambda}$.

For every $x\in \mathbf R^n$ with $\dist(x,A) \leq \kappa/18$, $j\in \{ 0,1,\dots,2i+k\}$, and $s,t\in S_x$
we estimate by \eqref{Pointwise comparison decay to global comparison decay:pointwise decay condition}, \ref{Dirichlet problem on unit ball}, and \ref{Estimate of comparison to the target object} that
\begin{equation}\label{comparison to the boundary value at different ball}
	\begin{aligned}
		|\D^{j} (v_s-v_t)|_{p;x,20h(x)} &\leq C_{\ref{Dirichlet problem on unit ball}} [20 h(x)]^{-j}|v_s - v_t|_{p;x,20 h(x)}\\
										&\leq C_{\ref{Dirichlet problem on unit ball}} [20 h(x)]^{-j} (|v_s - u|_{p;x,20 h(x)} + |u-v_t|_{p;x,20 h(x)})\\
										& \leq C_{\ref{Dirichlet problem on unit ball}}[20 h(x)]^{-j} 2 \Delta_1 h(x)^{i-j+n/p+\lambda}M\\
		&\leq \Delta_2 h(x)^{i-j+n/p+\lambda}M,
	\end{aligned}
\end{equation}
where $\Delta_2 = C_{\ref{Dirichlet problem on unit ball}}  2 \Delta_1 $.

Observe that for every $x\in \mathbf R^n$ with $\dist(x,A) \leq \kappa/18$,
 	\begin{equation}\label{Partition of unity representation}
 		\begin{aligned}
 				&u(y) - w_\delta(y) = \sum_{s\in S_x} \zeta_s(y)[u(y) - v_s(y)] ,\\
 			&  w_\delta(y)-v_s(y) = \sum_{t\in S_x} \zeta_s(y)[v_t(y)-v_s(y)]\\
 			&\text{for every $y\in \mathbf B(x,10 h(x))$}.
 		\end{aligned} 
 	\end{equation}
	For every $x\in \mathbf R^n$ with $\dist(x,A) \leq \kappa/18$ and $j\in \{ 0,1,\dots,i\}$,
 	using the first equation of \eqref{Partition of unity representation}, \ref{defpar:Symmetric algebra}, Leibniz's formula, \eqref{Pointwise comparison decay to global comparison decay:partition of unity}, \eqref{Estimate of comparison to the target object}, and \eqref{Pointwise comparison decay to global comparison decay:pointwise decay condition}, we estimate
 	\begin{equation}\label{Local comparison at point near A:end of claim}
 		\begin{aligned}
 			|\D^j(u-w_\delta)|_{p;x,10h(x)} &\leq \sum_{l=0}^j \sum_{s\in S_x} |\D^l \zeta_s \odot [ \D^{j-l} (u-v_s)]|_{p;x,10h(x)} ,\\
 									 &\leq \sum_{l=0}^j \sum_{s\in S_x} V_l 9^{-l}h(x)^{-l} \Delta_1 h(x)^{i-(j-l)+n/p+\lambda}M , \\
 									 &\leq \Delta_3 M h(x)^{n/p+\lambda+i-j},
 		\end{aligned} 
 	\end{equation}
 where $\Delta_3 = \sum_{l=0}^{i} (129)^n V_l 9^{-l} \Delta_1 $. 

 For every $x\in \mathbf R^n$ with $\dist(x,A) \leq \kappa/18$ and $j\in \{ 0,1,\dots,2i+k\}$,
 using the second equation of \eqref{Partition of unity representation}, \ref{defpar:Symmetric algebra}, Leibniz's formula, \eqref{Pointwise comparison decay to global comparison decay:partition of unity}, and \eqref{comparison to the boundary value at different ball} we estimate 
\begin{equation}
\begin{aligned}\label{2i order decay:estimate:2}
	|\D^{j}(w_\delta-v_s)|_{p,x,10h(x)} &\leq \sum_{t\in S_x} \sum_{l=0}^{j} |\D^l \zeta_t \odot [ \D^{j-l} (v_t-v_s)]|_{p;x,10h(x)}\\
								  &\leq \sum_{t\in S_x} \sum_{l=0}^{j} V_l 9^{-l}h(x)^{-l} \Delta_2 h(x)^{i-(j-l)+n/p+\lambda}M\\
								  &\leq  \Delta_4 h(x)^{i-j+n/p+\lambda}M,
\end{aligned}
	\end{equation}
where $\Delta_4 = (129)^n\sum_{l=0}^{2i+k} V_l 9^{-l}\Delta_2$.

Next, we claim that for each $a\in A$, $\delta \leq r\leq \kappa/18$, and $j \in \{ 0,1\dots,i\}$  
\begin{equation}\label{Local comparison at point of A:claim}
	\begin{aligned}
		&|\D^j(u-w_\delta)|_{p;a,r}\leq M\Delta_5 r^{n/p+\lambda +i-j},\\ 
		&|\D^{2i+k}w_\delta|_{p;a,r} \leq \Delta_7 (M+\gamma)r^{n/p+\lambda-i-k}.
	\end{aligned} 
\end{equation}
 To see the claim, notice that if $x\in \mathbf B(a,r)$, then
$\mathbf B(x,20 h(x)) \subset \mathbf B(a,2r)$. Thus, we may apply the Vitali covering theorem \cite[2.8.4]{MR41:1976} with $\delta,F,\tau$ by $\diam,\{ \mathbf B(x,2h(x)) : x\in \mathbf B(a,r)\},2$ to obtain a countable set $T \subset \mathbf B(a,r)$ such that 
\[  \{ \mathbf B(t,2h(t)) : t\in T \}\quad \text{is disjointed} \quad \text{and}\quad \mathbf B(a,r) \subset \bigcup \{ \mathbf B(t,10h(t)) : t \in T\}. \]
By \eqref{Local comparison at point near A:end of claim}, we have
\begin{equation}\label{Local comparison at point of A:end of claim1}
	\begin{aligned}
	|\D^j(u-w_\delta)|^p_{p;a,r} &\leq \sum_{t\in T}|\D^j(u-w_\delta)|^p_{p;t,10h(t)} ,\\
							&\leq \sum_{t\in T} [\Delta_3 M h(t)^{n/p+\lambda+i-j}]^p\\
							&=  [\Delta_3 M]^{p} \boldsymbol{\alpha}(n)^{-1} 2^{-n}\sum_{t\in T}
							 \mathscr L^n(\mathbf B(t,2h(t))) h(t)^{(\lambda+i-j) p}\\
							&\leq [\Delta_3 M]^p r^{(\lambda +i-j)^p} \boldsymbol{\alpha}(n)^{-1}2^{-n}
							\cdot \mathscr L^n(\mathbf B(a,2r))\\
							& \leq [\Delta_5 M]^p r^{n+(\lambda+i-j) p}
	\end{aligned}
\end{equation}
where $\Delta_5 = \Delta_3$.
By \eqref{2i order decay:estimate:2} and note $\lambda - i -k\geq 0$, we estimate for every $\alpha \in 
\Xi(n,k)$,
\begin{equation}\label{Local comparison at point of A:end of claim2}
	\begin{aligned}
		|\D^\alpha\Lap^i w_\delta|^p_{p;a,r} &\leq \sum_{t\in T} |\D^\alpha\Lap^i (w_\delta-v_t)|^p_{p;t,10h(t)}\\
							 &\leq \sum_{t\in T} ni |\D^{2i+k} (w_\delta-v_t)|^p_{p;t,10h(t)}\\
							 &\leq \sum_{t\in T} ni \Delta_4^p h(t)^{(-i+n/p+\lambda-k)p}M^p\\
							 &\leq \Delta_6^p M^p  r^{n+(-i+\lambda-k)p},\\
	\end{aligned}
\end{equation}
where $\Delta_6= (ni)^{1/p}\Delta_4$.
Using \ref{Prior estimate of Dirichlet problem on balls}, \eqref{Pointwise comparison decay to global comparison decay:pointwise decay condition}, \eqref{Local comparison at point of A:end of claim1}, and \eqref{Local comparison at point of A:end of claim2}, we estimate for every $\alpha \in 
\Xi(n,k)$,
\begin{equation*}\label{2i th order derivative of v}
	\begin{aligned}
		|\D^{2i} \D^\alpha w_\delta|_{p;a,r} &\leq C_{\ref{Prior estimate of Dirichlet problem on balls}}  (|\Lap^i \D^\alpha w_\delta|_{p;a,r} + r^{-2i-k}|w_\delta-P_a|_{p;a,r}) \\
		     				  &\leq  C_{\ref{Prior estimate of Dirichlet problem on balls}} [M  \Delta_6 r^{-i+n/p+\lambda-k} +
							   r^{-2i-k}(|w_\delta-u |_{p;a,r} +|u-P_a|_{p;a,r})] \\
		     				  &\leq \Delta_7 (M  + \gamma )r^{n/p+ \lambda - i-k} ,
	\end{aligned}
\end{equation*}
where $\Delta_7 = C_{\ref{Prior estimate of Dirichlet problem on balls}} \cdot 3\cdot (\Delta_6 + \Delta_3 + 1)$.

By \ref{Poincare inequality} and \eqref{Local comparison at point of A:claim},
 the set of functions consist of the restriction of $w_\delta$ to $\mathbf R^n \cap \{ x : \dist(x,A) < \kappa /36\}$ 
 corresponding to $0<\delta \leq \kappa/18$ is bounded in $W_{2i+k,p}^{loc}(\mathbf R^n \cap \{ x : \dist(x,A) < \kappa /36\})$. 
 By Rellich compactness theorem \cite[Theorem 7.26]{MR1814364}, it contains a sequence $w_{\delta_1},w_{\delta_2},\dots,$ converges weakly to a function $w$
  in $W_{2i+k,p}^{loc}(\mathbf R^n \cap \{ x : \dist(x,A) < \kappa /36\})$ which satisfies the following property. For every $a \in A$,
$
	\lim\limits_{l \to\infty}|w_{\delta_l} - w|_{2i+k-1,p;a,\kappa/72} = 0$.
It follows that,
for every $a\in A$, $0\leq j \leq i$ and $0<\delta \leq r\leq \kappa/72$, we have
\begin{equation}\label{Desired approximation}
	\begin{aligned}
		|\D^j(u-w)|_{p;a,r} &\leq |\D^j(u-w_{\delta_l})|_{p;a,r}  + |\D^j(w-w_{\delta_l})|_{p;a,r} \\
		                      &\leq  \Delta_3 M r^{n/p+\lambda+i-j} + |\D^j(w-w_{\delta_l})|_{p;a,r}. 
	\end{aligned}
\end{equation}
Letting $l \to \infty$ to obtain the conclusion.
\end{proof}

\begin{theorem}\label{Radamacher theorem for distribution with k jet equals 0}
	Suppose $n$ is a positive integer, $i,k$ are nonnegative integers, $1<p<\infty$,
	$T \in \mathscr D'(\mathbf R^n)$ and $A$ is the set of point $a\in\mathbf R^n$ such that
	 the distribution $T$ is $\nu_{i,p}$ pointwise differentiable of order $(k-1,1)$ and  $\pt \D^m T(a) = 0$ 
	 for $m\in \{0,1,\dots,k-1\}$. 
	 Then for $\mathscr L^n$ almost every point $a \in A$ the distribution $T$ is $\nu_{i,p}$ 
	pointwise differentiable of order $k$ at $a$.
\end{theorem}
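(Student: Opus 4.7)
The plan follows the roadmap laid out at the start of this section: localize $T$ and represent it as $\Lap^i u$ for a Sobolev function $u$; approximate $u$ on small balls by polyharmonic functions, patch these via Lemma \ref{Pointwise comparison decay to global comparison decay} into a function $v$ of class $W_{2i+k, q}^{loc}$ with good decay estimates inherited from the $\nu_{i, p}$ pointwise differentiability hypothesis; then transfer the classical pointwise differentiability of $v$, obtained from Calder\'on's theorem \cite[Theorem 10]{MR0136849}, back to $T$ using Corollary \ref{Rešhetnyak's} together with the duality between $\Lap^i$ and $W_{i, q}^\diamond$ furnished by integration by parts.

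First I would fix $R_0 > 0$ with $A \subset \mathbf U(0, R_0/2)$, replace $T$ by a cutoff supported in $\mathbf U(0, R_0)$ (which leaves $\nu_{i, p}$ pointwise differentiability at interior points of $A$ undisturbed), and apply Corollary \ref{Dirichlet problem on unit ball} to produce $u \in W_{i, q}^\diamond(\mathbf U(0, R_0))$ with $\Lap^i u = T$, where $q$ denotes the exponent dual to $p$. Proposition \ref{Modulus formulation for Lp pointwise differentiability} translates the hypothesis into $|T|_{-i, q; a, r} \leq M(a)\, r^{n/q + i + k}$ for $a \in A$ and $0 < r \leq \delta(a)$. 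Partition $A$, up to $\mathscr L^n$-null sets, into compact subsets $B_{s, \ell}$ on which $M \leq s$ and $\delta \geq 1/\ell$, and fix one such $B = B_{s, \ell}$. For each $a \in B$ and $0 < r \leq 1/\ell$, solve the homogeneous Dirichlet problem $\Lap^i v_{a, r} = 0$, $u - v_{a, r} \in W_{i, q}^\diamond(\mathbf U(a, r))$; the scaled a priori estimate from \ref{Prior estimate of Dirichlet problem on balls for zero boundary value function} gives
\begin{equation*}
    |\D^i(u - v_{a, r})|_{q; a, r} \leq C\, |T|_{-i, q; a, r} \leq C s\, r^{n/q + i + k}.
\end{equation*}
Combined with the Poincar\'e-type control $|u|_{q; a, r} \leq \gamma\, r^{n/q + i}$ coming from $u \in W_{i, q}^\diamond$ and Theorem \ref{Poincare inequality}, these data realize the hypotheses of Lemma \ref{Pointwise comparison decay to global comparison decay} with $\lambda = i + k$ and $P_a = 0$, producing $v \in W_{2i + k, q}^{loc}$ on a neighborhood of $B$ satisfying $|\D^j(u - v)|_{q; a, r} \leq \Delta\, r^{n/q + i + k - j}$ for $0 \leq j \leq i$ and every $a \in B$.

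Finally, apply Calder\'on's theorem \cite[Theorem 10]{MR0136849} to $v \in W_{2i + k, q}^{loc}$: at $\mathscr L^n$-almost every $a \in B$ there is a polynomial $P$ of degree at most $2i + k$ such that $|\D^j(v - P)|_{q; a, r} = o(r^{n/q + 2i + k - j})$ for every $0 \leq j \leq 2i + k$, the higher order estimates following by applying Corollary \ref{Rešhetnyak's} to each $\D^\alpha v \in W_{2i+k-|\alpha|, q}^{loc}$. For $\theta \in \mathscr D(\mathbf U(a, r))$ with $|\D^i \theta|_p \leq 1$, integration by parts $i$ times gives $(v - P)(\Lap^i \theta) = \pm \int \D^i(v - P) \cdot \D^i\theta \,d\mathscr L^n$, and H\"older's inequality therefore yields
\begin{equation*}
    |\Lap^i v - \Lap^i P|_{-i, q; a, r} \leq C\, |\D^i(v - P)|_{q; a, r} = o(r^{n/q + i + k}),
\end{equation*}
so that $\Lap^i v$ is $\nu_{i, p}$ pointwise differentiable of order $k$ at such $a$ with $k$-jet $\Lap^i P$ (of degree at most $k$). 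The parallel estimate $|\Lap^i(u - v)|_{-i, q; a, r} \leq C\, |\D^i(u - v)|_{q; a, r}$, together with the lemma's bound and an $L_q$-Lebesgue-point refinement for $\D^i(u - v)$ (the uniform bound forces $\D^i(u - v) = 0$ at $\mathscr L^n$-almost every $a \in B$, whence its $L_q$-averages sharpen to little-$o$), yields $|\Lap^i(u - v)|_{-i, q; a, r} = o(r^{n/q + i + k})$ at $\mathscr L^n$ almost every $a \in B$. Summing gives $|T - \Lap^i P|_{-i, q; a, r} = o(r^{n/q + i + k})$ at a.e. $a \in B$, which by \ref{Modulus formulation for Lp pointwise differentiability} is $\nu_{i, p}$ pointwise differentiability of $T$ of order $k$ at $a$. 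A countable union over $s, \ell$ completes the proof.

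The principal obstacle I expect is precisely the final upgrade from the uniform big-$O$ decay of $u - v$ furnished by Lemma \ref{Pointwise comparison decay to global comparison decay} to pointwise little-$o$ at almost every $a \in B$: this is the step that distinguishes $\nu_{i,p}$ pointwise differentiability of order $k$ (the little-$o$ regime) from the already-known order $(k-1,1)$ (the big-$O$ regime), and it requires carefully combining the uniform polyharmonic approximation with a Lebesgue-type pointwise refinement of $\D^i(u-v)$, in the same spirit as the classical passage in Calder\'on--Zygmund theory from $W_{m,p}$ membership to pointwise $(m,p)$-differentiability.
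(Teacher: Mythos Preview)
Your overall architecture matches the paper's: localize, solve $\Lap^i u = T$, build polyharmonic approximants $v_{a,r}$, patch via Lemma \ref{Pointwise comparison decay to global comparison decay}, and then invoke \cite[Theorem 10]{MR0136849} and Corollary \ref{Rešhetnyak’s}. Two points need repair.

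First, taking $P_a = 0$ in Lemma \ref{Pointwise comparison decay to global comparison decay} does not work. The hypothesis there is $r^{-n/q-i}|u-P_a|_{q;a,r}\leq\gamma$ uniformly for small $r$; with $P_a=0$ this would require $|u|_{q;a,r}\lesssim r^{n/q+i}$, which Poincar\'e on $W_{i,q}^\diamond(\mathbf U(0,R_0))$ does \emph{not} give on sub-balls (Poincar\'e produces an $r$-dependent polynomial, not zero). The paper instead applies Corollary \ref{Rešhetnyak’s} to $u\in W_{i,q}$ to obtain, at a.e.\ $a$, the $i$-jet $P_a$ of $u$ satisfying $r^{-n/q-i}|u-P_a|_{q;a,r}\to 0$, and then decomposes once more into closed sets $B_l=\{a:r^{-n/q-i}|u-P_a|_{q;a,r}\leq l\text{ for }0<r\leq l^{-1}\}$ on which the bound is uniform. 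The lemma is applied on $B_l\cap A_s$.

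Second, and this is the obstacle you yourself flag, the upgrade of $|\D^i(u-v)|_{q;a,r}$ from the lemma's $O(r^{n/q+i+k})$ to $o(r^{n/q+i+k})$ is not a Lebesgue-point argument. Knowing $\D^i(u-v)(a)=0$ a.e.\ on $B$ only yields $|\D^i(u-v)|_{q;a,r}=o(r^{n/q})$, which is far too weak. The correct move---and this is what the paper does---is to apply Calder\'on's theorem \cite[Theorem 10]{MR0136849} directly to $\D^j(u-v)$ for $0\leq j\leq i$: the uniform big-$O$ bound on $B_l\cap A_s$ is precisely the hypothesis $\D^j(u-v)\in T_q^{2i+k-j}(a)$, and Calder\'on's theorem upgrades this to $t_q^{2i+k-j}(a)$ at a.e.\ such $a$. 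You already invoke this theorem for $v$; apply it to $u-v$ as well and the ``principal obstacle'' dissolves.
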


\begin{proof}
	Apply \ref{Dirichlet problem on unit ball}
	to obtain constant $C_{\ref{Dirichlet problem on unit ball}}$  as in \ref{Dirichlet problem on unit ball} depending on $n,i,p,k$, and let $q = p/(p-1)$.
For every positive integer $s$ let $A_s$ be the set of  $a \in \mathbf R^n$ such that
	\begin{equation*}
		\begin{aligned}
			|T_x(\theta(r^{-1}(x-a)))| \leq s r^{n+k} \nu_{i,p}(\theta) \\
			 \text{for every $\theta\in \mathscr D_{\mathbf B(0,1)}(\mathbf R^n)$ and $0<r\leq s^{-1}$},
		\end{aligned}
	\end{equation*}
     the above condition is equivalent to
     \begin{equation*}\label{Pointwise decay bound}
     			r^{-n/p-i-k}|T|_{p;a,r} \leq s \quad
     		\text{for every $\theta\in \mathscr D_{\mathbf B(0,1)}(\mathbf R^n)$ and $0<r\leq s^{-1}$}. 	
     \end{equation*}
By \ref{lower-semicontinuous-lemma}, $A_s$ is closed.
	We will show that the distribution $T$ is $\nu_{i,p}$ pointwise differentiable of order $k$ at $a$ for $\mathscr L^n$ almost every point $a$ of $A_s$.
	
	Let $K$ be a compact subset of $\mathbf R^n$, and write $K_s = A_s \cap K$.
	Cutting $T$ outside a neighborhood of $K_j$, we may assume there exists  positive numbers $R_0,\kappa$ such that
	\begin{gather}
		0<\kappa <\inf\{ |x-a| : a \in K_s , x\in \mathbf R^n \sim \mathbf U(0,R_0)\}, \quad |T|_{-i,p;0,R_0}<+\infty.
	\end{gather}
	 We apply \ref{Dirichlet problem on unit ball} with $R$, $a$, and $f$ replaced by $R_0$, $0$ and the linear function obtain from extending $T|\mathscr D(\mathbf U(0,R_0))$ to $W_{i,q}^\diamond(\mathbf U(0,R_0))$ to obtain $u \in W_{i,p}^\diamond(\mathbf U(0,R_0))$ with 
	 \begin{equation}
	 	(\Lap^i u )(\theta) = T(\theta) \quad \text{for every $\theta \in \mathscr D(\mathbf U(0,R_0))$}.
	 \end{equation}
 For every positive integer $l$, let $B_l$ be the set of $a \in \mathbf U(0,R_0)$ such that there exist a polynomial function  $P_a$ mapping $\mathbf R^n$ into $\mathbf R$ of degree at most $i$ such that 
\begin{equation}\label{Uniform decay on the difference}
	r^{-n/p-i} |u-P_a|_{p;a,r} \leq l \quad \text{for every $0<r\leq l^{-1}$}.
	\end{equation}
	From \ref{Rešhetnyak’s}, we know that $\mathscr L^n(\mathbf U(0,R_0) \sim \bigcup_{l=1}^\infty B_l) = 0$.
For each $a\in K_s $ and $0<r\leq s^{-1}$, we apply \ref{Dirichlet problem on unit ball} to obtain $v_{a,r} \in W_{i,p}(\mathbf U(a,r))$ such that
\begin{equation}\label{Solving the Dirichlet problem}
	\Lap^{i}(u-v_{a,r}) = T ,\quad u-v_{a,r}\in W_{i,p}^\diamond(\mathbf U(a,r)).
\end{equation}

It follows that for the point $a$ of the closed set $ B_l \cap K_s$ and $0<r\leq \inf\{ s^{-1},l^{-1},\kappa\}$, there exists $v_{a,r} \in W_{i,p}(\mathbf U(a,r))$ with following property,
\begin{equation}\label{The combination of rearragement}
	\begin{aligned}
		\Lap^{i}(u-v_{a,r}) = T ,\quad u-v_{a,r}\in W_{i,p}^\diamond(\mathbf U(a,r)),\\
		r^{-n/p-i-k}|T|_{p;a,r} \leq s, \quad r^{-n/p-i} |u-P_a|_{p;a,r} \leq l,
	\end{aligned}
\end{equation}
by \ref{Dirichlet problem on unit ball}, we have $|\D^i(u-v_{a,r})|_{p;a,r} \leq C_{\ref{Dirichlet problem on unit ball}} |T|_{-i,p;a,r}$

We apply \ref{Pointwise comparison decay to global comparison decay} with \def\halfquad{\hskip 2 pt}
\[A,\halfquad   M,\halfquad \gamma,\halfquad  \kappa,\halfquad U, \halfquad \text{and} \halfquad\lambda\]
 replaced by 
 \[B_l \cap A_s, \halfquad  C_{\ref{Dirichlet problem on unit ball}}\cdot s,\halfquad l,\halfquad \inf\{s^{-1},l^{-1},\kappa\}, \halfquad\mathbf U(0,R_0), \halfquad \text{and} \halfquad i+k\]
  to obtain a function $v \in W_{2i,p}^{loc}(\mathbf R^n \cap \{ x : \dist(x,A)<\inf\{s^{-1},l^{-1},\kappa\}/36\})$ and a constant $\Delta$ such that for every $a\in B_l \cap A_s$, $j\in \{0,1,\dots,i\}$, and $0<r\leq \inf\{s^{-1},l^{-1},\kappa\}/72$, 
\begin{equation*}
	|\D^j(u-v)|_{p;a,r} \leq \Delta M r^{n/p+2i -j+k}.
\end{equation*}
By \cite[Theorem 10]{MR0136849}, for $\mathscr L^n$ almost every $a\in B_l \cap A_s$, 
\begin{equation*}\label{u,v has the same 2i+k-j order value}
	\lim\limits_{r \to 0^+} r^{j-2i-k-n/p}|\D^j(u-v)|_{p;a,r} = 0.
\end{equation*}
Also by \ref{Rešhetnyak’s}, for $\mathscr L^n$ almost every $a\in B_l \cap A_s$, there exists a polynomial function $Q_a$ of degree at most $2i+k$ such that,
\begin{equation*}\label{2i + k jet as 2i+k-j order value}
	\lim\limits_{r \to 0^+} r^{j-2i-k-n/p}|\D^j(Q_a-v)|_{p;a,r} = 0 \quad \text{for $j\in \{0,1,\dots,2i+k\}$}.
\end{equation*}
Finally, since
\begin{gather*} 
	|\Lap^i (u-Q_a)|_{-i,p;a,r} \leq |\D^i(u-Q_a)|_{p;a,r}  \\
	\leq |\D^i(u-v)|_{p;a,r} + |\D^i(Q_a - v)|_{p;a,r} , 
\end{gather*}
	we conclude
	\begin{equation}
		\lim\limits_{r\to 0^+}r^{-i-k-n/p} |T - \Lap^i Q_a|_{-i,p;a,r} = 0.
	\end{equation}
\end{proof}

\begin{corollary}\label{Radamacher theorem}\ref{introThmD}
	Suppose $n$ is a positive integer, $i,k$ are nonnegative integers, $1<p<+\infty$, $T \in \mathscr D'(\mathbf R^n)$, and $A$ is the set of all $a\in \mathbf R^n$ such that $T$ is $\nu_{i,p}$ pointwise differentiable of order $(k-1,1)$ at $a$. Then $T$ is $\nu_{i,p}$ pointwise differentiable of order $k$ at $\mathscr L^n$ almost all $a\in A$.
\end{corollary}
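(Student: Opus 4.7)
The plan is to reduce the general case to the $0$-jet case already handled by \ref{Radamacher theorem for distribution with k jet equals 0}, using the rectifiability theorem \ref{Rect} to subtract off a smooth approximation. Specifically, I would first apply \ref{Rect} with the order parameter $(k,\alpha)$ replaced by $(k-1,1)$ to obtain a countable family of functions $f_1, f_2, \ldots$ of class $(k-1,1)$ mapping $\mathbf R^n$ into $\mathbf R$ together with compact sets $C_1, C_2, \ldots$ whose union exhausts $A$, such that $\pt \D^m T(a) = \D^m f_j(a)$ for every $j$, every $a \in C_j$, and every $m \in \{0,\dots,k-1\}$. It suffices to prove the conclusion at $\mathscr L^n$ almost every point of each $C_j$ separately.

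Fix $j$ and, viewing the locally bounded function $f_j$ as a distribution, set $S_j = T - f_j \in \mathscr D'(\mathbf R^n)$. Because $f_j$ is of class $(k-1,1)$, the degree $(k-1)$ Taylor polynomial $P_a$ of $f_j$ at any $a$ satisfies the classical bound $|f_j(x)-P_a(x)| \leq L |x-a|^{k}$ on bounded neighbourhoods, from which the formulation in \ref{Modulus formulation for Lp pointwise differentiability} gives $\nu_{0,p}$ pointwise differentiability of $f_j$ of order $(k-1,1)$ at every point with $(k-1)$ jet $P_a$; upgrading via \ref{pt order k in  0 sense implies pt order k  in -i sense} yields $\nu_{i,p}$ pointwise differentiability of order $(k-1,1)$. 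Since the hypothesis and the matching relation $\pt \D^m T(a) = \D^m f_j(a)$ for $m \leq k-1$ imply that $T$ has the same $(k-1)$ jet $P_a$ at each $a \in C_j$, it follows that $S_j$ is $\nu_{i,p}$ pointwise differentiable of order $(k-1,1)$ with vanishing $(k-1)$ jet at every point of $C_j$. Theorem \ref{Radamacher theorem for distribution with k jet equals 0} then gives that $S_j$ is $\nu_{i,p}$ pointwise differentiable of order $k$ at $\mathscr L^n$ almost every $a \in C_j$.

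To conclude $T = S_j + f_j$ is $\nu_{i,p}$ pointwise differentiable of order $k$ almost everywhere on $C_j$, I would separately upgrade $f_j$. The classical Rademacher theorem applied to the Lipschitz function $\D^{k-1} f_j$ shows that $\D^k f_j$ exists $\mathscr L^n$ almost everywhere and is locally bounded, so that $f_j \in W_{k,q}^{loc}(\mathbf R^n)$ for the Hölder conjugate $q$ of $p$; Reshetnyak's theorem \ref{Rešhetnyak’s} (case (2)) then shows that $f_j$ as a distribution is $\nu_{0,p}$ pointwise differentiable of order $k$ at every $q$th order Lebesgue point of $\D^k f_j$, hence $\mathscr L^n$ almost everywhere. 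One more application of \ref{pt order k in  0 sense implies pt order k  in -i sense} upgrades this to $\nu_{i,p}$ pointwise differentiability of order $k$ of $f_j$ almost everywhere. Adding the two differentiabilities gives $T = S_j + f_j$ is $\nu_{i,p}$ pointwise differentiable of order $k$ almost everywhere on $C_j$, and taking the union over $j$ completes the proof.

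The principal obstacle in executing this plan is the bookkeeping in the middle paragraph: namely, carefully verifying that the classical pointwise $|f_j - P_a|\leq L|x-a|^k$ bound for class $(k-1,1)$ functions really translates into the integral $\nu_{i,p}$ differentiability encoded by \ref{Modulus formulation for Lp pointwise differentiability}, and confirming that the subtraction $T - f_j$ behaves additively with respect to pointwise differentiability so that matching jets on $C_j$ forces the $(k-1)$ jet of $S_j$ to vanish there. Once these are in place the main work has been outsourced to \ref{Rect}, \ref{Rešhetnyak’s}, and \ref{Radamacher theorem for distribution with k jet equals 0}.
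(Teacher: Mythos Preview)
Your proposal is correct and follows essentially the same route as the paper: apply \ref{Rect} at order $(k-1,1)$ to decompose $A$ into compact pieces $C_j$ with matching class-$(k-1,1)$ functions $f_j$, subtract $f_j$ so that \ref{Radamacher theorem for distribution with k jet equals 0} applies to $T-f_j$ on $C_j$, and recover order-$k$ differentiability of $f_j$ itself via classical Rademacher and \ref{Rešhetnyak’s}. The only cosmetic difference is that the paper invokes \ref{Rešhetnyak’s} once, going straight to $\nu_{i,p}$ differentiability of order $k$ for $f_j$ (which already implies the order $(k-1,1)$ statement you prove separately), whereas you split this into two stages; and the paper explicitly disposes of $k=0$ first by noting \ref{Radamacher theorem for distribution with k jet equals 0} applies directly there.
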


\begin{proof}
	In view of \ref{Radamacher theorem for distribution with k jet equals 0},
	 we may assume $k\geq 1$. 
	 We will reduce the problem to \ref{Radamacher theorem for distribution with k jet equals 0} 
	 by the following argument. We apply \ref{Rect} with $(,\alpha)$ replaced by $(k-1,1)$ to obtain a sequence of functions $f_1,\dots,$ of class $(k-1,1)$ mapping $\mathbf R^n$ into $\mathbf R$ and compact set $C_1,\dots,$ such that $A = \bigcup_{i=1}^\infty C_i$ and
	\begin{equation}
		  \begin{aligned}
			&\pt \D^m T(a) = \D^m f_j(a)\\
			&\text{for every positive integer $j$, $a\in C_j$, and $m\in \{ 0,\dots,k\}$}.
		\end{aligned}
	\end{equation}
	By \ref{Rešhetnyak’s}, for each positive integer $i$ the distribution $S_i \in \mathscr D'(\mathbf R^n)$ defined by
	\begin{equation}
		\textstyle S_i(\theta) = \int f_i \theta \ d \mathscr L^n \quad \text{for every $\theta \in \mathscr D(\mathbf R^n)$},
	\end{equation} 
is $\nu_{i,p}$ pointwise differentiable of order $k$ at $\mathscr L^n$ almost every $a \in C_i$. 
We apply \ref{Radamacher theorem for distribution with k jet equals 0} with $T$ replaced by $T-S_i$ 
to conclude that for $\mathscr L^n$ almost every point $a$ of $C_i$ the distribution $T-S_i$ is 
$\nu_{i,p}$ pointwise differentiable of order $k$ at $a$, 
 whence $T$ is $\nu_{i,p}$ pointwise differentiable of order $k$ at $\mathscr L^n$ almost every point of $C_i$.
\end{proof}
	
\begin{remark}
		If $k>0$, then by \cite[4.17]{2021} and \ref{comparison of pointwise differentiability:1} the $k$ jet of $T$ in \ref{Radamacher theorem} must be $0$.
\end{remark}

\appendix
\section{Uniform spaces}\label{uni}

Auxiliary to the proof of \ref{introThmA} (see \ref{Borel-function-of-ptD}), we recall some basic facts about uniform spaces. Uniform spaces can be viewed as a topological generalization of metric spaces in which allow for notions of uniform continuity and equicontinuity (\ref{Basics of uniform space}). The key result required for \ref{Borel-function-of-ptD} is \ref{continuity of fibration} in which we prove a continuity criterion for maps from products of topological spaces into a uniform space. Additionally, we will use a classical projection theorem (\ref{Suslin-projection}) for Borel sets to complete the proof of Borel regularity of the differential (see \ref{ThmA-4} and \ref{ThmA-5}).

\begin{definition}
	\label{Unifor space:Def:Uniformity and entourages}
	\label{Definition of uniform space}\BourbakiGTI{\BourbakiCiteNumber{2}{1}{1} Definition 1}\citep[Ch. \RomanNumeralCaps{2}]{MR910295}
	 A \textit{uniformity} $\mathuniformity U$ on a set $X$ is a family of subsets of $X\times X$ satisfying the following two properties:
	 \begin{enumerate}
	 	\item The intersection of nonempty finite subfamily of $\mathuniformity U$  belongs to $\mathuniformity U$.
	 	\item Every subset $S$ of $X\times X$ containing a member of $\mathuniformity U$ belongs to $\mathuniformity U$.
	 \end{enumerate} 
 	 and also the following three properties:
	\begin{enumerate}\setcounter{enumi}{2}
		\item Every member of $\mathuniformity U$ contains the diagonal $X\times X \cap \{ (x,x) : x\in X\}$.
		\item If $V\in \mathfrak U$ then  $V^{-1}\in \mathuniformity U$.
		\item For each $V\in \mathuniformity U$ there exists $W\in  \mathuniformity U$ such that $W\circ W \subset V$.
	\end{enumerate}
	 A \textit{uniform space} is a set $X$ equipped with a uniformity. An element $U$ of uniformity is called  an \textit{entourage} of the uniform space of $X$.
\end{definition}
\begin{propparagraph}\label{Basics of uniform space}
	There exists a unique topology on $X$ such that for each $x\in X$, the family of sets
	\[ X \cap \{ y : (y,x) \in V\} \quad \text{corresponding to every entourage $V$ of $X$},\]
	is a fundamental system of neighborhoods of the topology.
	The topology just defined  is called the topology induced by uniform structure on $X$ \BourbakiGTI{\BourbakiCiteNumber{2}{1}{2} Proposition 1}.

Given a pseudo metric $
	\rho$ on a set $X$. The uniformity defined by $\rho$ is the uniformity on $X$ characterized by the following property, \textit{ every entourage contains $X\times X \cap \{ (x,y) : \rho(x,y)\leq \epsilon\}$ for some $0<\epsilon<+\infty$} \BourbakiGTII{\BourbakiCiteNumber{9}{1}{2} Definition 2}.
A function $f$ mapping uniform space $X$ into a uniform space $X'$ is \textit{uniformly continuous}
 if and only if 
 the inverse image of an entourage of $X'$ under the map $f\times f$ is an entourage of  $X$ \BourbakiGTI{\BourbakiCiteNumber{2}{2}{1} Definition 3}.
Suppose $X$ is a topological space, $Y$ is a uniform space, and $I$ is a set. Then  a family of functions $(f_i)_{i\in I}$ mapping $X$ in to $Y$ is equicontinuous at $a\in X$ if and only if 
	for every entourage $V$ of $Y$ there exists a neighborhood $U\subset X$ of $a$ such that 
	$
	f_i[U] \subset  Y \cap \{ y : (f_i(a),y) \in V\},
	$
	or equivalently 
	$\bigcap \{ f_i^{-1} [Y \cap \{ y : (f_i(a),y) \in V\}]  : i\in I\}$
	is a neighborhood of $a$. When $I$ is a single point the condition reduces to continuity of a single function.

\end{propparagraph}
\begin{theorem}\label{continuity of fibration}
	Suppose $X,Y$ are topological spaces, $Z$ is a uniform space, and $u $ is a function mapping $X \times Y$ into $Z$ with  the following two properties.
	\begin{enumerate}
		\item Every point $x\in X$ has a neighborhood $S$ such that
		$\{u(x,\cdot) :  x \in S \}$
		is equicontinuous.
		\item For every $b\in Y$, the function $u(\cdot , b)$ is continuous.
	\end{enumerate}
	Then $u$ is continuous.
\end{theorem}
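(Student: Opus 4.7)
The plan is to verify continuity of $u$ at each fixed point $(a,b)\in X\times Y$ by a standard ``$V\circ V\subset V_0$'' decomposition, exploiting hypothesis (1) to obtain uniform control of the ``vertical'' variation as the ``horizontal'' parameter moves, and hypothesis (2) to control the ``horizontal'' variation at the base point $b$.

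Concretely, fix $(a,b)\in X\times Y$ and an entourage $V_0$ of $Z$. By axiom (5) of \ref{Definition of uniform space}, choose an entourage $V$ with $V\circ V\subset V_0$. Apply hypothesis (1) to obtain a neighborhood $S$ of $a$ such that the family $\{u(x,\cdot):x\in S\}$ is equicontinuous at $b$; by the formulation of equicontinuity recalled in \ref{Basics of uniform space}, there exists a neighborhood $W$ of $b$ in $Y$ with
\[
\bigl(u(x,b),u(x,y)\bigr)\in V\quad\text{for every }x\in S\text{ and }y\in W.
\]
Next, invoke hypothesis (2) applied to the continuous function $u(\cdot,b)$ at $a$ to produce a neighborhood $U$ of $a$ with $U\subset S$ and
\[
\bigl(u(a,b),u(x,b)\bigr)\in V\quad\text{for every }x\in U.
\]

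For every $(x,y)\in U\times W$, combine the two memberships by transitivity through $u(x,b)$ and use $V\circ V\subset V_0$ to conclude $(u(a,b),u(x,y))\in V_0$. Since $V_0$ was arbitrary and $U\times W$ is a neighborhood of $(a,b)$ in the product topology, this gives continuity of $u$ at $(a,b)$, and hence on $X\times Y$.

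The argument has no real obstacle: the only subtlety is bookkeeping, namely ensuring that the neighborhood $U$ obtained from hypothesis (2) is chosen inside the neighborhood $S$ on which the equicontinuity from hypothesis (1) is available, so that the bound on $(u(x,b),u(x,y))$ actually applies for $x\in U$. Symmetry of $V$ is not needed because both estimates are already oriented consistently with respect to the ``base points'' $u(a,b)$ and $u(x,b)$ that appear naturally from hypotheses (2) and (1), respectively.
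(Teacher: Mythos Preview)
Your proof is correct and follows essentially the same approach as the paper's: both fix $(a,b)$, split an entourage via $V\circ V\subset V_0$, use hypothesis~(1) to control $(u(x,b),u(x,y))$ uniformly for $x$ near $a$ and $y$ near $b$, and use hypothesis~(2) to control $(u(a,b),u(x,b))$. Your version is slightly cleaner in explicitly requiring $U\subset S$, whereas the paper handles this implicitly by restricting to $x\in S$ satisfying the additional condition from hypothesis~(2).
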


\begin{proof}

	We have to show that $u$ is continuous at every $(a,b)\in X\times Y$. Suppose $U$ is a neighborhood of $u(a,b) = c$ in $Z$ and $\mathscr U$ is the unifomity of $Z$.
	
	Choose $V,V'\in \mathscr U$ with 
	$Z \cap \{z : (z,c)\in V \}\subset U$ and 
	\[ (Z\times Z) \cap\{ (a,b) : (a,z),(z,b) \in V' \quad \text{ for some $z\in Z$} \} \subset V.\]
	By hypothesis, there exists a neighborhood $S$ of $x$ such that $ \{ u(x,\cdot) : x \in S\}$ is equicontinuous at $b\in Y$. Hence, there exists a neighborhood $W$ of $b\in Y$ such that, 
	\begin{gather*}
		(u(x,y) ,u(x,b)) \in  V' \quad \text{for $x\in S$ and $y\in W$}.
	\end{gather*}
	Since $u(\cdot,b)$ is continuous, 
	$ X\cap\{ x : (u(x,b),u(a,b))\in V' \}$ is a neighborhood of $a$. Finally, for every $x\in S $, $y\in W$ with $(u(x,b),u(a,b)) \in V'$, the point
	$(u(x,y),u(x,b))$ and  $(u(x,b),u(a,b))$ belong to $ V'$, hence $(u(x,y),u(a,b))\in V$.
\end{proof}
\begin{definition}\label{Def:Lower semicontinuity}
	Suppose $X$ is a topological space. An $\overline{\mathbf R}$ valued function  $f: X \to \overline{\mathbf R}$ is called $\textit{lower semicontinuous}$ if and only if
	$f^{-1}[\overline{\mathbf R}\cap\{ t : t > \alpha\}] \subset X$ is open for every $\alpha \in \overline{\mathbf R}$.
	Moreover, the following two statements are equivalent:
	\begin{enumerate}
		\item The function $f$ is lower semicontinuous.
		\item The set $(X\times \mathbf R) \cap \{ (x,c) : f(x)\leq c\} $ is closed.
	\end{enumerate}
\end{definition}
\begin{definition}\label{Def:Polish,Lusin, space}
	Suppose $X$ is a topological space. 
	\begin{enumerate}
		\item 	A topological space $X$ is called \textit{metrizable} topological space if and only if there exists a metric inducing the topology of $X$.
		\item  A topological space $X$ is called a $\textit{Polish space}$ if and only if it is separable and  metrizable  by a complete  metric.
		\item  A topological space $X$ is called a $\textit{Lusin space}$ if and only if its topology is Hausdorff and is coarser than a Polish topology.
	\end{enumerate}
\end{definition}

\begin{propparagraph}\label{Suslin-projection}
	We will use the following proposition, which readily follows from \citep[2.2.10]{MR41:1976}:
	\emph{If $X$ is a Lusin space, $Y$ is a Hausdorff space, $f : X \to Y$ is a continuous map, $f|A$ is injective, and $A \subset X$ is a Borel set, then $f(A)$ is a Borel set.}
\end{propparagraph}

\def\MeasurablefunctionDemonstation{}

\section{Locally convex spaces}\label{loc}
In order to study the topology on the space of test functions and the space of distributions, we collect some basic results on locally convex spaces below. The main results which will be used in the proof of theorem \ref{introThmA} are \ref{Montel space:remark:1} and \ref{Topological property of space of distribution}. Additionally, the notions of duality and polar sets (\ref{Duality} and \ref{Def:Weak topology})
will be essential for the proof of \ref{lower-semicontinuous-lemma}, a key ingredient in the proof of theorem \ref{introThmA}

The topology on the dual of a locally convex space induced by uniform convergence on a class of bounded sets $\mathbf B$(\ref{Uniform convergence}) is defined by a set of seminorms related to the bounded sets $\mathbf B$ (\ref{Characterization of uniform convergence for linear functional}).
The notation for uniform convergence in \ref{Uniform convergence} will be used to strengthened the results obtained in \ref{continuity of fibration} to \ref{equi-criteria-special-case}, which is a preliminary lemma to prove the \ref{lower-semicontinuous-lemma}. The main tool for studying topologies of uniform convergence on bounded sets are the duality and polar, which is introduced in \ref{Duality} and \ref{Def:Weak topology}, and will be  also used in the example in \ref{E_c-strictly-stronger-then-E_s}, and comparison of topologies of uniform convergence on bounded sets (\ref{compar-topology}).

Finally, the notions of strong dual, bidual, barreled space, Montel space, and reflexive space (\ref{Def:Reflexivity}) are introduced to study the topology of the space of test functions (\ref{Topological property of space of distribution}). In particular, we show that the topology  on $\mathscr D'(\mathbf R^n)$ induced by uniform convergence on bounded sets and compact sets agree(\ref{Bounded subset of strong topology of space of distribution}). This, together with \ref{Suslin-projection} allow us to improve the result \ref{Borel-function-of-ptD:enum:4} to \ref{ThmA-5}

\begin{definition}\BourbakiTVS{\BourbakiCiteNumber{1}{1}{1}. Definition 1}\label{Def:TVS}
		A topology on a vector space is compatible to the vector space structure if and only if the addition and multiplication is continuous. A \textit{topological vector space} is a vector space $E$ equipped with a topology compatible to the vector space structure of $E$. 
	\end{definition}

\begin{remark}\label{Topology of TVS is uniformizable}
	In particular, the additive group structure on topological vector space $E$ is compatible with its topology and the topology is induced by the uniform structure $\mathuniformity{U}$ consisting of 
	$E \times E \cap \{ (x,y) : x-y\in U\}$ corresponding to every neighborhood $U$ of $0$. A more precise characterization of the local structure is in \BourbakiTVS{\BourbakiCiteNumber{1}{1}{5} Proposition 4}.
\end{remark}
\begin{definition}\BourbakiTVS{\BourbakiCiteNumber{1}{1}{5}. Proposition 4}\label{Def:TVS:LC}\label{Definition of fundamental system of seminorms}
		Suppose $E$ is a topological vector space.	A set $\Gamma$ of real valued seminorms is a \textit{fundamental system of seminorms} if and only if the topology is defined by $\Gamma$. A topological vector space $E$ is \textit{locally convex} if and only if there exist a fundamental system of neighborhood at $0$ consist of convex set. A locally convex space is a topological vector space whose  topology is locally convex.
	\end{definition}

\paragraph{Space of linear maps}

\begin{definition}
	Suppose $E,F$ are topological vector spaces. The set of all linear maps from $E$ to $F$ is denoted by $\Hom(E,F)$. The set $\mathscr L(E,F)$ consists of  all continuous functions in $\Hom(E,F)$. In case $F=\mathbf R$, we let $E^\ast = \Hom(E,\mathbf R),E' = \mathscr L(E,\mathbf R)$. We define a bilinear form $\langle \cdot ,\cdot \rangle : E \times E^\ast \to \mathbf R$ by  $\langle x ,f \rangle = f(x)$.
Suppose $S$ is a subset of locally convex space $E$. 
	The closed convex balanced envelope of $S$ is denoted by $\Gamma(S)$. 
	I.e.\ $\Gamma(S)$ is the smallest closed convex balanced set containing $S$ and 
	 is equal to the closed convex envelope of $S\cup -S$, where$-S = E \cap \{ -s : s\in S\}$.
\end{definition}
\begin{definition}\label{Uniform convergence}
	Suppose $\mathbf B$ is a family of bounded subsets of $E$. For every locally convex space $F$, $\mathscr L_{\mathbf B}(E,F)$ denotes the locally convex space $\mathscr L(E,F)$  equipped with uniform convergence on  $\mathbf B$. The fundamental system of neighborhoods of $0$ is given by 
	$\mathscr L(E,F) \cap \{f : f(S) \subset U \}$
	corresponding to each $S\in \mathbf B$
	 and each neighborhood $U$ of $0\in F$. 
	 In case $F= \mathbf R$, we let $E'_{\mathbf B} = \mathscr L_{\mathbf B}(E,\mathbf R)$
	  and $E_s'$, $E_b'$ and $E_c'$ is the locally convex space $E'_{\mathbf B}$ with $B$ equals the class of all one point, bounded and compact subsets of $E$ respectively.
\end{definition}

\begin{example}
	Suppose $E$ is a normed space. Then the locally convex topology of $E'_b$ is defined by the dual norm. 
\end{example}

\begin{definition}\label{Duality}
	\citep[\RomanNumeralCaps{2}, \S~6.3]{MR910295}Suppose $F,G$ are vector spaces put into duality by a bilinear form $B : F \times G \to \mathbf R$. For every $M \subset F$, the set 
	\[  G\cap \{ y :  B(x,y)   \geq -1 , x\in M\} \]
	is called the $\mathit{polar}$ (with respect to the duality $B$) of $M$ and is denoted by $M^\circ$. If $E$ is a locally convex space, then the polar of a  subset of $E$ means the polar for the duality $\langle \cdot,\cdot \rangle : E \times E' \to \mathbf R$, when the duality with $E$ is not indicated.
\end{definition}
	\begin{definition}\label{Def:Weak topology}
	The weak topology on $F$ defined by a  duality $B : F \times G \to \mathbf R$ is the locally convex topology in  
	which every neighborhood of $0\in F$ contains sets of the form 
	\[ F \cap \bigcap_{i \in I}\{ f : |B(f,g_i)| \leq \epsilon_i  \}  \]
	where $I$ is a finite set, $g_i \in G$, and $0<\epsilon<\infty$.
	 The set $\Gamma(S)^\circ$ is called the \textit{absolute polar} of $S \subset F$ and is equal to
	\[ G \cap \{ g : |B(f,g)|\leq 1 , f \in S \}.\]
	Suppose $E$ is a locally convex space. The weak topology on 
	$E$ and $E'$ defined by the duality $\langle \cdot ,\cdot \rangle$ is denoted by  $\sigma(E,E')$ and $\sigma(E',E)$. 
\end{definition}
\begin{propparagraph}\label{Characterization of uniform convergence for linear functional}
	If $\Sigma$ is a family of seminorms which defines the locally convex topology for $F$, then 
	the topology defined by the family of seminorms
	\[ p_M(f) = \sup \{ p(f(x)) : x\in M\}, \quad \text{for every } f\in \mathscr L_{\mathbf B}(E, F),\]
	corresponding to $M \in \mathbf B $ and $ p \in \Sigma$ agrees with 
	the topology of $\mathscr L_{\mathbf B}(E,F)$ defined in last paragraph(\citep[\RomanNumeralCaps{3}, \S~3.1]{MR910295}). 
	In case $F=\mathbf R$, the fundamental system of neigborhood of $0$ is given by
	$\lambda (\bigcup \Omega)^\circ$ for  each finite subfamily $\Omega$ of 
	$  \{ \Gamma(C) : C \in \mathbf B \}$ and $0<\lambda <+\infty$.
\end{propparagraph}
\begin{remark}\label{family-of-Hyperplane-bounded-over-nbd}
	Suppose $E,F$ are locally convex spaces and $M \subset \Hom(E,F)$. Then $M$ is equicontinuous if and only if for every neighborhood $U$ of  $0$ in $F$, the set $ E \cap \bigcap \{ f^{-1}(U) : f\in M\}$ is neighborhood of $0\in E$.
\end{remark}

\begin{remark}\label{equi}
	Suppose $E$ is a locally convex space, $\mathbf B$ is  a family of bounded subsets of $E$, $S\in \mathbf B$, and $M$ is the set of linear function $f$ such that
	\[ f(\alpha) =  \langle s , \alpha \rangle \quad \text{for every $s\in E'$},\]
	for some $s\in S$. Then $M$ is equicontinuous subset of $E'_{\mathbf B}$.
\end{remark}

\begin{propparagraph}\label{compar-topology}
	\BourbakiTVS{\BourbakiCiteNumber{3}{3}{1}. Proposition 2, \BourbakiCiteNumber{4}{1}{3}. Proposition 6}
	Suppose $E$ is a locally convex spaces, $\mathbf B_1$ and $\mathbf B_2$ are families of bounded subsets of $E$. Then the topology of uniform convergence on $\mathbf B_1$ of $E'$ is stronger than the topology of uniform convergence on  $\mathbf B_2$ of $E'$ if and only if for every $A\in \mathbf B_2$, there exist $0 < \lambda < \infty$ and $U_1,\dots,U_n \in \mathbf B_1$ such that $A \subset \lambda^{-1} \Gamma(\bigcup_{i=0}^n U_i)$ (see \ref{Duality}).
\end{propparagraph}

	\begin{remark}\label{Continuity of embedding the space of distribution}
		Suppose $n$ is a positive integer, $U$ is an open subset of $\mathbf R^n$ and $Y$ is a Banach space.
		Then the inclusion of $\mathscr E'(U,Y)_b$ (resp. $\mathscr E'(U,Y)_c$) into $\mathscr D'(U,Y)_b$ (resp. $\mathscr D'(U,Y)_c$) is continuous.
	\end{remark}
	
\begin{defparagraph}\label{Def:Stong dual and bidual}\label{Def:Barreled}\label{Def:Montel}\label{Def:Reflexivity}
Given a vector space $E$. A subset $A$ of $E$ \textit{absorbs} a subset $B$ of $E$ if and only if there exists a positive real number $\alpha$ such that  for every $\lambda \in \mathbf R$ with $|\lambda| \geq \alpha$, $B \subset \lambda A $. A subset $A$ of $E$ is \textit{absorbent} if and only if it absorbs every singleton set of $E$. A subset $A$ of $E$ is \textit{balanced} if and only if for $\lambda \in \mathbf R$ with $|\lambda|\leq 1$, $\lambda A \subset A$.
	Now, suppose $E$ is a locally convex space.
	The locally convex space $E'_b$ is called the \textit{strong dual} of the locally convex space $E$.
	The strong dual of $E'_b$ is called the \textit{bidual} of the locally convex space $E$. A locally convex space $E$ is \textit{barreled} if and only if every closed convex balanced absorbent subset of $E$ is a neighborhood of $0$ \BourbakiTVS{\BourbakiCiteNumber{3}{4}{1}. Definition 2}, for example
every complete metrizable locally convex space is barreled \BourbakiTVS{\BourbakiCiteNumber{3}{4}{1}. Corollary of Proposition 2}, which includes all Banach spaces. One of the important properties for barreled space is that the Banach Steinhaus theorem holds for barreled spaces \BourbakiTVS{\BourbakiCiteNumber{3}{4}{2}. Theorem 1} . 
		A Hausdorff and barreled locally convex space $E$ is \textit{Montel} if and only if every bounded subset of $E$ is relatively compact \BourbakiTVS{\BourbakiCiteNumber{4}{2}{5}. Definition 4}.
		A locally convex space $E$ is \textit{reflexive} if and only if the canonical mapping $c_E$ from $E$ into its bidual is an isomorphism of topological vector spaces \BourbakiTVS{\BourbakiCiteNumber{4}{2}{3}. Definition 3}.
	\end{defparagraph} 

\begin{remark}\label{Montel space:remark:1}
	For every Montel, Hausdorff, and Barreled locally convex space $E$,
	the topology of $E'_b$ and $E'_c$ agree. 
\end{remark}

\begin{theorem}\BourbakiTVS{\BourbakiCiteNumber{4}{2}{5}. Example 4}\label{Topological property of space of distribution}
		Suppose $n$ is a positive integer, $Y$ is a complete normed space, and $U$ is an open subset of $\mathbf R^n$. Then the locally convex spaces $\mathscr E(U,Y)$ and $\mathscr D(U,Y)$ are Barreled and Hausdorff locally convex spaces. If in additionally $Y$ is finite dimensional then $\mathscr E(U,Y)$ and $\mathscr D(U,Y)$ are Montel, barreled, and Hausdorff locally convex spaces.
	\end{theorem}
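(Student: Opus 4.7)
The plan is to handle the three properties (Hausdorff, barreled, Montel) separately, treating $\mathscr E(U,Y)$ first and then reducing the case of $\mathscr D(U,Y)$ to it via the strict inductive limit structure.

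For the Hausdorff property, I would note that in $\mathscr E(U,Y)$ the seminorms $\boldsymbol{\nu}_K^0$, with $K$ ranging over compact subsets of $U$, separate points: if $f \ne 0$ then $f(x_0) \ne 0$ for some $x_0 \in U$, so $\boldsymbol{\nu}_{\{x_0\}}^0(f) > 0$. For $\mathscr D(U,Y)$, each $\mathscr D_K(U,Y)$ is a closed vector subspace of the Hausdorff space $\mathscr E(U,Y)$, hence Hausdorff, and the strict inductive limit of Hausdorff locally convex spaces is Hausdorff by the standard result \BourbakiTVS{\BourbakiCiteNumber{2}{4}{6}}.

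For the barreled property, I would first exhibit $\mathscr E(U,Y)$ as a Fréchet space: choose an exhaustion $K_1 \subset K_2 \subset \cdots$ of $U$ by compact sets; the countable family $\{\boldsymbol{\nu}_{K_j}^i : i,j \in \mathbf N\}$ defines the topology, and completeness follows from the completeness of $Y$ together with the fact that uniform limits on compacta of smooth functions with uniformly convergent derivatives of every order are smooth. Thus $\mathscr E(U,Y)$ is complete metrizable and therefore barreled by \BourbakiTVS{\BourbakiCiteNumber{3}{4}{1}. Corollary of Proposition 2}. Each $\mathscr D_K(U,Y)$ is a closed subspace of $\mathscr E(U,Y)$, so is itself Fréchet and barreled, and the strict inductive limit $\mathscr D(U,Y)$ of barreled spaces is barreled by \BourbakiTVS{\BourbakiCiteNumber{3}{4}{1}. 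Corollary 3 of Proposition 3}.

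For the Montel property, assume $Y$ is finite dimensional and let $B \subset \mathscr E(U,Y)$ be bounded, i.e. $\sup_{f \in B} \boldsymbol{\nu}_K^i(f) < \infty$ for every compact $K \subset U$ and every nonnegative integer $i$. Then for each $i$, the family $\{\mathrm D^i f : f \in B\}$ is uniformly bounded on compacta; combined with the uniform bound on $\mathrm D^{i+1} f$, this gives equicontinuity. Since $Y$ is finite dimensional, the Arzelà--Ascoli theorem yields relative compactness of $\{\mathrm D^i f : f \in B\}$ in the space of continuous maps from $K$ into $\bigodot^i(\mathbf R^n, Y)$ with the supremum norm, and a diagonal argument over $i$ and $K_j$ upgrades this to relative compactness of $B$ in $\mathscr E(U,Y)$. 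For $\mathscr D(U,Y)$, the key additional ingredient is that every bounded subset of a strict inductive limit of closed subspaces lies in some step $\mathscr D_K(U,Y)$; applying the previous Arzelà--Ascoli argument inside $\mathscr D_K(U,Y) \subset \mathscr E(U,Y)$ and observing that $\mathscr D_K(U,Y)$ is closed in $\mathscr E(U,Y)$ gives relative compactness of the bounded set in $\mathscr D(U,Y)$.

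The main obstacle is the Montel half of the $\mathscr D(U,Y)$ case, specifically justifying that bounded sets in the strict inductive limit are absorbed by some $\mathscr D_K(U,Y)$; this is the one place where the strict inductive limit structure (as opposed to a general inductive limit) is used, and once it is in place the rest is a routine application of Arzelà--Ascoli together with the finite dimensionality of $Y$.
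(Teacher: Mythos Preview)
Your argument is correct and is precisely the standard proof one finds in Bourbaki or any functional analysis text. Note, however, that the paper does not give its own proof of this statement at all: the theorem is simply attributed to \BourbakiTVS{\BourbakiCiteNumber{4}{2}{5}. Example 4} and stated without proof. Your proposal therefore supplies the details underlying that citation rather than paralleling an argument in the paper; the Fr\'echet/strict-LF structure for barreledness and the Arzel\`a--Ascoli plus Dieudonn\'e--Schwartz step-absorption for the Montel property are exactly the ingredients Bourbaki uses.
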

\begin{theorem}\label{Bounded subset of strong topology of space of distribution}
	Suppose $n$ is a positive integer, $Y$ is a finite dimensional normed space, and $U$ is an open subset of $\mathbf R^n$.
	In order for a subset $A$ of $\mathscr D'(U,Y)_b$ (resp. $\mathscr E'(U,Y)_b$) to be relatively compact it is necessary and sufficient that
	\[ \sup \{ T(\theta) : T \in A\} <+\infty \quad \text{for every $\theta \in \mathscr D(U,Y)$ (resp. $\mathscr E(U,Y)$)}.\]
\end{theorem}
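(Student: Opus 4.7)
The plan is to prove the two directions separately, using the fact (Theorem \ref{Topological property of space of distribution}) that when $Y$ is finite dimensional, both $\mathscr D(U,Y)$ and $\mathscr E(U,Y)$ are Hausdorff, Barreled, and Montel, together with \ref{Montel space:remark:1} which identifies the topologies $E'_b$ and $E'_c$ for such $E$.

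For necessity, I would note that if $A$ is relatively compact in the Hausdorff locally convex space $\mathscr D'(U,Y)_b$, then $A$ is bounded there. For each $\theta\in\mathscr D(U,Y)$, evaluation at $\theta$ is a continuous linear functional on $\mathscr D'(U,Y)_b$ (indeed it is continuous even on $\mathscr D'(U,Y)_s$), hence maps the bounded set $A$ to a bounded subset of $\mathbf R$. The $\mathscr E'(U,Y)_b$ case is identical.

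For sufficiency, I would run the following three-step argument. Suppose $A \subset \mathscr D'(U,Y)$ is pointwise bounded. Since $\mathscr D(U,Y)$ is Barreled (Theorem \ref{Topological property of space of distribution}), the Banach--Steinhaus theorem \BourbakiTVS{\BourbakiCiteNumber{3}{4}{2}. Theorem 1} yields that $A$ is equicontinuous. By the Bourbaki--Alaoglu theorem applied to equicontinuous sets (\BourbakiTVS{\BourbakiCiteNumber{3}{3}{4}}), the $\sigma(\mathscr D'(U,Y),\mathscr D(U,Y))$-closure $\overline A$ is still equicontinuous and is compact for $\sigma(\mathscr D'(U,Y),\mathscr D(U,Y))$; moreover, on equicontinuous subsets of the dual, the weak topology agrees with the topology of uniform convergence on precompact subsets of $\mathscr D(U,Y)$, so $\overline A$ is also compact in $\mathscr D'(U,Y)_c$. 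Finally, because $\mathscr D(U,Y)$ is Montel (again by Theorem \ref{Topological property of space of distribution}, using that $Y$ is finite dimensional), \ref{Montel space:remark:1} gives that $\mathscr D'(U,Y)_b$ and $\mathscr D'(U,Y)_c$ carry the same topology. Thus $\overline A$ is compact in $\mathscr D'(U,Y)_b$, and $A$ is relatively compact there. The argument for $\mathscr E'(U,Y)$ is word-for-word the same, since $\mathscr E(U,Y)$ is also Hausdorff, Barreled, and Montel in the finite-dimensional setting.

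The only non-trivial step is the chain equicontinuous $\Rightarrow$ $\sigma$-relatively compact $\Rightarrow$ relatively compact in $\mathscr D'(U,Y)_c$ $\Rightarrow$ relatively compact in $\mathscr D'(U,Y)_b$. The first two implications are the content of Bourbaki--Alaoglu for equicontinuous subsets; the third is precisely where the Montel property (and therefore the hypothesis $\dim Y < \infty$) is used via \ref{Montel space:remark:1}. Nothing else in the proof requires any calculation or construction; everything reduces to invoking the topological facts assembled in the appendix.
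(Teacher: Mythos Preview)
Your argument is correct. The paper's own proof is shorter because it invokes higher-level Bourbaki results directly: from \ref{Topological property of space of distribution} the spaces $\mathscr D(U,Y)$ and $\mathscr E(U,Y)$ are Hausdorff, Barreled, and Montel; then \BourbakiTVS{\BourbakiCiteNumber{4}{2}{5}, below Proposition 8} gives that they are reflexive, and the conclusion follows at once from \BourbakiTVS{\BourbakiCiteNumber{4}{2}{5}. Proposition 9} (the strong dual of a Montel space is Montel, so strongly bounded---equivalently, by reflexivity, weak-$\ast$ bounded---subsets of the dual are relatively compact).

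Your route unpacks this black box: you replace the appeal to Proposition~9 by the explicit chain Banach--Steinhaus $\Rightarrow$ equicontinuous $\Rightarrow$ weak-$\ast$ relatively compact (Alaoglu) $\Rightarrow$ relatively compact in $E'_c$ (since on equicontinuous sets the weak-$\ast$ and precompact-convergence topologies coincide) $\Rightarrow$ relatively compact in $E'_b$ (via \ref{Montel space:remark:1}). This is essentially the content of the Bourbaki proposition the paper cites, so your argument is a more self-contained version of the same proof rather than a genuinely different strategy. The paper's approach is terser; yours makes visible exactly where each hypothesis (Barreled for Banach--Steinhaus, Montel for $E'_b=E'_c$) is consumed.
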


\begin{proof}
By \ref{Topological property of space of distribution}, $\mathscr E(U,Y)$ and $\mathscr D(U,Y)$ are
 Hausdorff, Barreled, and Montel. By \BourbakiTVS{a statement below \BourbakiCiteNumber{4}{2}{5}. Proposition 8}, $\mathscr E(U,Y)$ and $\mathscr D(U,Y)$ are reflexive.
The conclusion follows from \BourbakiTVS{\BourbakiCiteNumber{4}{2}{5}. Proposition 9}.
\end{proof}


\begin{thebibliography}{11}
\providecommand{\natexlab}[1]{#1}
\providecommand{\url}[1]{\texttt{#1}}
\expandafter\ifx\csname urlstyle\endcsname\relax
  \providecommand{\doi}[1]{doi: #1}\else
  \providecommand{\doi}{doi: \begingroup \urlstyle{rm}\Url}\fi

\bibitem[Agmon(2011)]{Agmon2011}
S.~Agmon.
\newblock \emph{The Lp Approach to the Dirichlet Problem}, pages 49--92.
\newblock Springer Berlin Heidelberg, Berlin, Heidelberg, 2011.
\newblock ISBN 978-3-642-10926-3.
\newblock \doi{10.1007/978-3-642-10926-3_2}.
\newblock URL \url{https://doi.org/10.1007/978-3-642-10926-3_2}.

\bibitem[Bourbaki(1987)]{MR910295}
N.~Bourbaki.
\newblock \emph{Topological vector spaces. Chapters 1--5}.
\newblock Elements of Mathematics (Berlin). Springer-Verlag, Berlin Translated from the French by H. G. Eggleston and S. Madan. URL, 1987.
\newblock URL \url{https://doi.org/10.1007/978-3-642-61715-7}.

\bibitem[Bourbaki(1998{\natexlab{a}})]{bourbaki1998general1}
N.~Bourbaki.
\newblock \emph{General Topology: Chapters 1-4}.
\newblock Elements of mathematics. Springer, 1998{\natexlab{a}}.
\newblock ISBN 9783540642411.
\newblock URL \url{https://books.google.com.tw/books?id=kTFSfmsjDM0C}.

\bibitem[Bourbaki(1998{\natexlab{b}})]{bourbaki1998general2}
N.~Bourbaki.
\newblock \emph{General Topology: Chapters 5--10}.
\newblock Actualit{\'e}s scientifiques et industrielles. Springer Berlin Heidelberg, 1998{\natexlab{b}}.
\newblock ISBN 9783540645634.
\newblock URL \url{https://books.google.com.tw/books?id=bQwhdmL6IjUC}.

\bibitem[Calderón and Zygmund(1961)]{MR0136849}
A.-P. Calderón and A.~Zygmund.
\newblock Local properties of solutions of elliptic partial differential equations.
\newblock \emph{Studia Math}, 20:\penalty0 171--225, 1961.
\newblock URL \url{http://matwbn.icm.edu.pl/ksiazki/sm/sm20/sm20113.pdf}.

\bibitem[Federer(1969)]{MR41:1976}
H.~Federer.
\newblock \emph{Geometric measure theory}.
\newblock Springer, Berlin, 1969, Band 153. Springer-Verlag New York Inc., New York, 1969.

\bibitem[Gilbarg and Trudinger(1998)]{MR1814364}
D.~Gilbarg and N.~S. Trudinger.
\newblock \emph{Elliptic partial differential equations of second order}.
\newblock Classics in Mathematics. Springer-Verlag, Berlin, 2001. Reprint of theedition, 1998.

\bibitem[Menne(2013{\natexlab{a}})]{MR3023856}
U.~Menne.
\newblock Second order rectifiability of integral varifolds of locally bounded first variation.
\newblock \emph{J. Geom. Anal}, 23\penalty0 (2):\penalty0 709--763, 2013{\natexlab{a}}.
\newblock URL \url{https://doi.org/10.1007/s12220-011-9261-5}.

\bibitem[Menne(2013{\natexlab{b}})]{e5d4609f3eb74629ad6704c7dc209467}
U.~Menne.
\newblock Second order rectifiability of integral varifolds of locally bounded first variation.
\newblock \emph{Journal of Geometric Analysis}, 23\penalty0 (2):\penalty0 709--763, Apr. 2013{\natexlab{b}}.
\newblock ISSN 1050-6926.
\newblock \doi{10.1007/s12220-011-9261-5}.
\newblock Funding Information: The author acknowledges financial support via the DFG Forschergruppe 469. The major part of this work was accomplished while the author was at the University of T{\"u}bingen. Some parts were done at the ETH Z{\"u}rich and the work was put in its final form at the AEI Golm. AEI publication number AEI-2008-065.

\bibitem[Menne(2016)]{862d4889d5ff49cf87cff80c14f3769a}
U.~Menne.
\newblock Weakly differentiable functions on varifolds.
\newblock \emph{Indiana University Mathematics Journal}, 65\penalty0 (3):\penalty0 977--988, 2016.
\newblock ISSN 0022-2518.
\newblock \doi{10.1512/iumj.2016.65.5829}.
\newblock Publisher Copyright: {\textcopyright}.

\bibitem[Menne(2021)]{2021}
U.~Menne.
\newblock {Pointwise differentiability of higher-order for distributions}.
\newblock \emph{Analysis and PDE}, 14\penalty0 (2):\penalty0 323 -- 354, 2021.
\newblock \doi{10.2140/apde.2021.14.323}.
\newblock URL \url{https://doi.org/10.2140/apde.2021.14.323}.

\end{thebibliography}
\end{document}